\newcommand{\id}{\mbox{Id}}
\newcommand{\1}{{\bf 1}}
\newcommand{\bu}{\mathbf{U}}
\newcommand{\bv}{\mathbf{V}}
\newcommand{\bof}{\mathbf{F}}
\newcommand{\C}{\mathbb C}
\newcommand{\R}{\mathbb R}
\newcommand{\U}{\mathbb U}
\newcommand{\ca}{\mathcal A}
\newcommand{\cac}{\mathcal C}
\newcommand{\ce}{\mathcal E}
\newcommand{\ch}{\mathcal H}
\newcommand{\cp}{\mathcal P}
\newcommand{\al}{\alpha}
\newcommand{\der}{\delta}
\newcommand{\vp}{\varphi}
\newtheorem{theorem}{Theorem}[section]
\newtheorem{corollary}[theorem]{Corollary}
\newtheorem{definition}[theorem]{Definition}
\newtheorem{lemma}[theorem]{Lemma}
\newtheorem{proposition}[theorem]{Proposition}
\theoremstyle{remark}
\newtheorem{remark}[theorem]{Remark}
\date{\today}
\begin{document}

\makeatletter
\def\@settitle{\begin{center}%
  \baselineskip14\p@\relax
    \normalfont\LARGE
\@title
  \end{center}%
}
\makeatother

\title[Skorohod and rough integration with respect to the NC-fBm]{Skorohod and rough integration with respect to the non-commutative fractional Brownian motion}

\author{Aur\'elien Deya}
\address[A. Deya]{Institut Elie Cartan, University of Lorraine
B.P. 239, 54506 Vandoeuvre-l\`es-Nancy, Cedex
France}
\email{aurelien.deya@univ-lorraine.fr}

\author{Ren\'e Schott}
\address[R. Schott]{Institut Elie Cartan, University of Lorraine
B.P. 239, 54506 Vandoeuvre-l\`es-Nancy, Cedex
France}
\email{rene.schott@univ-lorraine.fr}

\keywords{non-commutative stochastic calculus; non-commutative fractional Brownian motion; Malliavin calculus; It{\^o}-Stratonovich formula}

\subjclass[2010]{46L53, 60H05, 60H07, 60G22}

\begin{abstract}
We pursue our investigations, initiated in \cite{deya-schott-3}, about stochastic integration with respect to the non-commutative fractional Brownian motion (NC-fBm). Our main objective in this paper is to compare the pathwise constructions of \cite{deya-schott-3} with a Skorohod-type interpretation of the integral.

\smallskip

As a first step, we provide details on the basic tools and properties associated with non-commutative Malliavin calculus, by mimicking the presentation of Nualart's celebrated treatise \cite{nualart-book}. Then we check that, just as in the classical (commutative) situation, Skorohod integration can indeed be considered in the presence of the NC-fBm, at least for a Hurst index $H>\frac14$.

\smallskip

This finally puts us in a position to state and prove the desired comparison result, which can be regarded as an It{\^o}-Stratonovich correction formula for the NC-fBm.

\end{abstract}

\maketitle

\section{Introduction}

This study can be seen as the continuation of our previous paper \cite{deya-schott-3}. The two works share the same general objective, namely to investigate integration issues related to the non-commutative fractional Brownian motion (NC-fBm in the sequel). Let us first recall that this topic lies at the intersection of three important fields (we will of course go back in detail to each of these points in the sequel):

\smallskip

\noindent
$\bullet$ the theory of fractional processes, which, as far as modeling is concerned, aims at more flexibility than the usual Brownian noises;

\smallskip

\noindent
$\bullet$ the theory of stochastic integration, or how to overcome, in a differential context, the difficulties steming from the irregularity of the most interesting stochastic processes;

\smallskip

\noindent
$\bullet$ the theory of non-commutative processes, that is the analysis of processes with values in a non-commutative probability space.
 
\

Thus, through the subsequent study, we intend to bring a contribution - even a modest one - to each of these general areas. Our objective is also to provide new elements of comparison, whether similarities or differences, between the classical and the non-commutative probability settings.

\

The object at the center of the study is the NC-fBm, a process which first occurs in \cite{nourdin-taqqu} within a central-limit-theorem result. As its name suggests, the NC-fBm is the counterpart, in the non-commutative probability framework, of the classical fractional Brownian motion. This analogy can at least be justified along two (correlated) directions:

\smallskip

\noindent
$(i)$ First, the NC-fBm is a family of semicircular processes, the non-commutative analog of the Gaussian processes. Just as Gaussian processes (in the classical world), semicircular processes play a central role in the non-commutative probability theory, and they are also known to be characterized by their mean and covariance functions. The mean and covariance functions of the NC-fBm are precisely those of the classical fBm, as can be seen in the subsequent Definition \ref{defi:NC-fbm}. 

\smallskip

\noindent
$(ii)$ Secondly, let us recall that a fundamental feature of non-commutative probability theory is its close links with random matrix theory. In his seminal paper \cite{voiculescu}, Voiculescu showed in particular that the $d$-dimensional Hermitian Brownian motion (i.e. the family of $(d\times d)$-Hermitian matrices with upper-diagonal entries given by independent complex Brownian motions) converges, in the spectral sense and as $d\to \infty$, to the so-called free Brownian motion (i.e. the centered semicircular process with covariance given by the standard Brownian covariance). It turns out that this convergence property can be extended to the fractional situation: starting from a Hermitian fBm, the new limit then precisely corresponds to the NC-fBm (see \cite[Proposition 3.6]{deya-hfbm} for more details).

\

The question of stochastic integration with respect to a non-commutative process was first raised in the breakthrough paper \cite{biane-speicher} by Biane and Speicher, with the construction of an It{\^o}-type integral with respect to the free Brownian motion. These considerations (or at least a part of them) were then extended to the NC-fBm in \cite{deya-schott-3} using the so-called \emph{rough paths}, or \emph{pathwise}, approach developed in \cite{deya-schott} (we will report on those results in Section \ref{subsec:pathwise-integr} below).

\smallskip

Beyond the interest for a \enquote{reasonable definition of the stochastic integral}, the study of integration issues often sheds new light on the properties and general behaviour of the process under consideration. For instance, studying integration with respect to the free Brownian motion points out the central role of the free independence property satisfied by its disjoint increments. In the (non-commutative) fractional situation, where free independence is no longer available, such an analysis tells us in particular that the infinitesimal variations of the process can be easily controlled as long as the regularity coefficient $H$ of the process is strictly larger than $\frac12$, but then this control requires sophisticated \enquote{second-order} tools as soon as $H<\frac12$ (note that the intermediate case $H=\frac12$ corresponds to the free Brownian motion). When $H\leq \frac14$, the process even happens to be \enquote{locally too non-commutative} to allow a suitable control of infinitesimal increments, and accordingly the exhibition of a stochastic integral (see \cite[Remark 2.7 and Proposition 2.11]{deya-schott-3}). In each of these situations, the construction also emphasizes the fundamental role of the semicircular property, with an extensive use of the non-commutative Wick formula throughout the procedure (formula \eqref{form-wick-original} below).

\

In this paper, we would like to go even further into this semicircular analysis, by considering another general approach to stochastic integration, namely the \emph{Malliavin calculus} approach, leading to the so-called \emph{Skorohod integral}. 

\smallskip

A first part of the study (Section \ref{sec:skoro-int}) will thus be devoted to the presentation of Malliavin calculus in the non-commutative setting, for a given general semicircular process. The first developments on \enquote{non-commutative Malliavin calculus} can be again traced back to the aforementioned paper \cite{biane-speicher} by Biane and Speicher. Our below presentation will slightly differ from theirs (see Remarks \ref{rk:differ-1} and \ref{rk:differ-2} below), and in fact, our objective will be to stay as close as possible to the classical presentation of Malliavin calculus (i.e., in the commutative setting), especially the presentation in \cite{nualart-book}. We hope that this similarity can make the introduction of these tools easily accessible to non \enquote{NC experts}. 

\smallskip

The idea then will be to illustrate this approach through the NC-fBm example, so as to define the \emph{Skorohod integral with respect to the NC-fBm}, at least for a non-trivial class of integrands (see Proposition \ref{prop:skoro-integr}).

\

In the classical probability setting, Skorohod integration is often considered as the natural extension of It{\^o} integration, owing to its very \enquote{stochastic} nature, while the pathwise approaches are rather seen as extensions of the Stratonovich interpretation (see Remark \ref{rk:strato-int} for more details). Following this idea, any comparison result between the Skorohod and the pathwise integral is classically referred to as an \emph{It{\^o}-Stratonovich (correction) formula}. When dealing with a one-dimensional fBm $B$ of Hurst index $H>\frac14$, the following comparison formula can for instance be found in \cite{cheridito-nualart} (see also \cite[Section 5.2.3]{nualart-book}): for any $f\in \cac^\infty(\R;\R)$ such that $f$ and its derivatives are of polynomial growth,
\begin{equation}\label{ito-strato-commuta}
\int_s^t f(B_u) \, dB_u=\int_s^t f(B_u)\, \delta B_u+H\int_s^t f'(B_u)\, u^{2H-1} \, du \, ,
\end{equation}
where the integral in the left-hand side is understood in the pathwise \enquote{Stratonovich} sense, while the integral in the right-hand side is understood in the Skorohod \enquote{It{\^o}} sense. Observe that when $H=\frac12$, that is when $B$ is a standard Brownian motion, formula \eqref{ito-strato-commuta} reduces to the standard It{\^o}-Stratonovich formula.

\smallskip

Naturally, these correction formulas are closely related to the so-called \enquote{It{\^o} formulas}, that account for the differential rule satisfied by It{\^o} or Skorohod integral. Using \eqref{ito-strato-commuta}, together with some standard properties of the pathwise integral, we get for instance (see \cite[Theorem 1]{alos-mazet-nualart}) that  
$$f(B_t)-f(B_s)=\int_s^t f'(B_u) \, \delta B_u+H\int_s^t f''(B_u)\, u^{2H-1} \, du \, ,$$
provided $H>\frac14$ and $f\in \cac^\infty(\R;\R)$, with derivatives of polynomial growth. A multidimensional version of this result has also been established in \cite{hu-jolis-tindel} (note that, considering the above interpretation $(ii)$ of the NC-fBm, the multidimensional setting is clearly closer to the framework of the paper): given $B=(B^1,\ldots,B^d)$ a $d$-dimensional fBm of Hurst index $H>\frac14$, one has, for any $f\in \cac^\infty(\R^d;\R)$ such that $f$ and its partial derivatives are of polynomial growth,
$$f(B_t)-f(B_s)=\int_s^t \langle \nabla f(B_u) , \delta B_u\rangle+H\int_s^t \Delta f(B_u)\, u^{2H-1} \, du \, ,$$
with the usual notation $\nabla f(x):=(\partial_{x_1}f(x),\ldots, \partial_{x_d}f(x))$ and $\Delta f (x):=\sum_{i=1}^d \partial^2_{x_i}f(x)$. Again, when $H=\frac12$, we immediately recover the classical It{\^o} formula for the standard Brownian motion.

\

As we will see in the sequel, such a comparison between Skorohod and pathwise integrals is still possible in the NC probability setting (when working with a NC-fBm), using a specific \enquote{non-commutative refinement} of the correction term. This is the topic of Theorem \ref{main-theo} below, which can be considered as the main result of the paper.

\smallskip

Before we can state and prove this formula, we will of course need to briefly remind the reader with some preliminary existence results about the pathwise integral with respect to the NC-fBm, as they are displayed in \cite{deya-schott-3} (see Section \ref{subsec:pathwise-integr} below). The key object behind these results is the so-called \enquote{L{\'e}vy area} term $\mathbb{X}^{2}$, corresponding to the non-commutative counterpart of the genuine L{\'e}vy area of rough paths theory, and providing the suitable correction to the usual Riemann sum (see Proposition \ref{prop:rough-int}). As a natural consequence of this central role, the desired comparison between Skorohod and pathwise integrals will first require a comparison result at the level of the L{\'e}vy area term: this will be the purpose of Proposition \ref{prop:rough-case}, our main technical result in this analysis. The strategy can here be compared with some of the arguments used in the recent study \cite{cass-lim} by Cass and Lim towards a general It{\^o}-Stratonovich formula for the solutions of rough differential systems (in the classical commutative setting).

\

\

The paper is organized in accordance with the previous description. In Section \ref{sec:skoro-int}, we first recall some basics about the non-commutative probability setting (Sections \ref{subsec:nc-setting} and \ref{subsec:semicirc-proc}), and then go on with a slightly reshaped presentation (compared to the one in \cite{biane-speicher}) of the non-commutative Malliavin calculus associated with a general semicircular process. From Section \ref{sec:int-nc-fbm}, we will restrict our attention to the case of the NC-fBm (Definition \ref{defi:NC-fbm}). We will first check that the conditions ensuring the existence of the Skorohod integral are indeed satisfied in this situation (Section \ref{subsec:skoro-fbm}), and also briefly recall some previous results about pathwise integration (Section \ref{subsec:pathwise-integr}). This will naturally settle the stage for our main result, stated in Section \ref{sec:ito-strato-correc}, that is the It{\^o}-Stratonovich correction formula (Theorem \ref{main-theo}). Finally, Appendix \ref{sec:proof-approx-sum-levy} is devoted to the proof of our main technical property about the \enquote{local approximation} of the L{\'e}vy-area term (Proposition \ref{prop:rough-case}).

\

\emph{Although the framework and the objects of this study are quite specific, we have tried to make their presentation as self-contained as possible, and so (hopefully) accessible to a large audience.} 

\

\textbf{Acknowledgements.} We are deeply grateful to an anonymous reviewer for his/her very careful reading and his/her highly detailed report. This report has led to several significant clarifications in our study, and also entailed numerous improvements in the presentation of our results.

\

\section{Skorohod integration with respect to a semicircular process}\label{sec:skoro-int}

Before we can turn to the presentation of the non-commutative Malliavin calculus (and its associated Skorohod integral), we first need to recall a few basics about the general framework of our study: the non-commutative probability theory.  

\smallskip

\emph{Note that, for the sake of conciness, we will often (not to say always) use the shortcut notation NC for \enquote{non-commutative} in the sequel.}

\subsection{NC probability spaces: setting and notations}\label{subsec:nc-setting}

\

\smallskip

\begin{definition}\label{defi:nc-proba-space}
A NC probability space consists of a pair $(\ca,\vp)$, where:

\smallskip

\noindent
$(i)$ $\ca$ is a unital algebra over $\C$, equipped with an antilinear $\ast$-operation $X\mapsto X^\ast$ satisfying $(X^\ast)^\ast=X$ and $(XY)^\ast=Y^\ast X^\ast$ for all $X,Y\in \ca$. Also, there must exist a norm $\| .\| : \ca \to [0,\infty[$ which makes $\ca$ a Banach space, and such that $\| XY\|\leq \|X\| \|Y\|$ and $\|X^\ast X\|=\|X\|^2$, for all $X,Y\in \ca$.

\smallskip

\noindent
$(ii)$ $\vp:\ca \to \C$ (the \enquote{trace}, or \enquote{expectaction}) is a linear functional on $\ca$ satisfying $\vp(1)=1$, $\vp(XY)=\vp(YX)$, $\vp(X^\ast X)\geq 0$ for all $X,Y\in \ca$, and $\vp(X^\ast X)=0 \Leftrightarrow X=0$. 

\smallskip

Once endowed with a NC probability space, we call any $X\in \ca$ a NC random variable, and accordingly any path $X:[0,T] \to \ca,\, t\mapsto X_t$ is a NC process. 
\end{definition}

\smallskip

\begin{remark}
This \enquote{NC probability space} terminology is in fact a simplification with respect to the more specific vocabulary presented in \cite{nica-speicher}. Following the latter reference, the above structure should rather be referred to as a \emph{tracial $C^\star$-probability space with faithful trace}, and is a more restrictive setting than the general non-commutative probability framework (as defined in \cite[Definition 1.1]{nica-speicher}).
\end{remark}

\smallskip

\begin{remark}
The \enquote{random variable} terminology can be further justified through the existence of some underlying probability law having the same moments as $X\in \ca$ (where the moments for $X$ are understood in the sense of the trace). Again, an exhaustive presentation of these features can be found in \cite{nica-speicher}. 
\end{remark}

\smallskip

Using the above points $(i)$-$(ii)$, it is easy to see that the map 
$$(X,Y) \mapsto \langle X,Y\rangle_{L^2(\vp)} :=\vp\big( X Y^\ast\big)$$
defines a (complex) inner product in $\ca$. As usual, we will denote the completion of $\ca$ with respect to $\langle .,.\rangle_{L^2(\vp)}$ as $L^2(\vp)$, and somehow see this space as the NC analog of the classical $L^2(\Omega)$-space (note however that $\ca$ may be strictly contained in $L^2(\vp)$). Just as in the classical commutative case, the $L^2(\vp)$-norm will be the reference topology in the subsequent developments on NC Malliavin calculus.

\smallskip

Besides, it can be shown (see \cite[Proposition 3.17]{nica-speicher}) that the norm $\|.\|$ in point $(i)$ is necessarily linked to $\vp$ through the relation
\begin{equation}\label{trace-norm-2}
\|X\|=\lim_{r\to \infty} \vp\big(\big(X X^\ast\big)^r\big)^{\frac{1}{2r}} \, .
\end{equation}
Based on this fundamental property, we will sometimes write the norm $\|.\|$ as $\|.\|_{L^\infty(\vp)}$, so as to make a clear distinction with the $L^2(\vp)$-topology.

\

When studying differential properties in a NC structure, the tensor product of the space is expected to play a central role. Here, starting from a NC probability space $(\ca,\vp)$, we can also endow the (algebraic) tensor product $\ca \otimes \ca$ with a tracial structure. To this end, we first define the product, resp. the $\ast$-operation, through the bilinear, resp. linear, extension of the formulas
$$(F_1 \otimes F_2)\cdot (G_1\otimes G_2):=(F_1 G_1)\otimes (F_2 G_2) \quad , \quad \text{resp.} \quad  (F_1 \otimes F_2)^\ast:=F_1^\ast \otimes F_2^\ast \, ,$$
and then consider the trace $\vp\times \vp$ defined as the linear extension of
\begin{equation}\label{product-trace}
\big( \vp\times \vp\big)\big( F_1\otimes F_2\big):=\vp(F_1)\vp(F_2) \, .
\end{equation}
The fact that the so-defined form $\vp\times \vp$ is indeed a trace  on $\ca \otimes \ca$ (in the sense of Definition \ref{defi:nc-proba-space}, item $(ii)$) is not immediate, but it can be shown for instance through the use of orthonormal elements of $(\ca,\vp)$. Note that the resulting pair $(\ca \otimes \ca,\vp \times \vp)$ is not exactly a NC probability space (in the sense of Definition \ref{defi:nc-proba-space}), due its lack of completeness.

\smallskip

Of course, the above construction of a tracial structure can then be extended to any tensor product $\ca^{\otimes k}$, $k\geq 3$.

\smallskip

On the other hand, the following operation, that we will occasionally use in the sequel, is specific to $\ca\otimes \ca$: namely, we set, for all $F_1,F_2,G\in \ca$,
\begin{equation}\label{sharp-notation}
\big( F_1 \otimes F_2) \sharp G:=F_1GF_2 \, 
\end{equation}
and then linearly extend this definition to all $\bof\in \ca\otimes \ca$ and $G\in \ca$. 

\smallskip

With this notation in hand, one can check that for any polynomial function $P(x):=\sum_{k=0}^d a_k \, x^k$,
\begin{equation}\label{deriv-rule-tensor-product}
\lim_{\varepsilon \to 0} \frac{1}{\varepsilon} \big\{P(X+\varepsilon Y)-P(X) \big\}=\partial P(X)\sharp Y \, ,
\end{equation}
where the \emph{tensor derivative} $\partial P(X)$ is the element in $\ca \otimes \ca$ defined as
\begin{equation}\label{partial-op}
\partial P(X):=\sum_{k=1}^d a_k\sum_{i=0}^{k-1} X^i \otimes X^{k-1-i} \, .
\end{equation}
Extending such a property at second order (which we shall use in the sequel) naturally leads us to the consideration of the second-order tensor derivative: namely, we set
\begin{equation}\label{d-deux-p}
\partial^2  P(X):=\sum_{k=2}^d a_k \sum_{\substack{i,j \geq 0\\i+j \leq k-2}} X^i \otimes X^j \otimes X^{k-2-i-j} \ \in \ca \otimes \ca \otimes \ca \, .
\end{equation}

\subsection{Semicircular processes}\label{subsec:semicirc-proc}

\

\smallskip

Let us first recall that for every $m\geq 1$, a \emph{pairing} of $\{1,\ldots,2m\}$ is a partition of the latter set into $m$ disjoint pairs. 

\smallskip

Now, in NC probability theory, special attention is paid to the \emph{non-crossing pairings}: those are the pairings $\pi$ for which there are no elements $\{p_1,q_1\},\{p_2,q_2\}\in \pi$ with $p_1 < p_2 <q_1 <q_2$. In the sequel, we will denote by $NC_2(r)$ the set of the non-crossing pairings of $\{1,\ldots,r\}$. This set appears in particular in the definition of the following central family of (NC) random variables:

\begin{definition}\label{def:semicircular-family}
Given a NC probability space $(\ca,\vp)$, a  (centered) semicircular family is a collection $\{X_i\}_{i \in I}$ of self-adjoint elements in $\ca$ (i.e. $X_i^\ast=X_i$) such that, for every even integer $r\geq $1 and all $i_1,\ldots,i_r \in I$, one has the identity
\begin{equation}\label{form-wick-original}
\vp\big( X_{i_1}\cdots X_{i_r}\big)=\sum_{\pi \in NC_2(r)} \prod_{(p,q)\in \pi} \vp\big(X_{i_p} X_{i_q}\big) \ ,
\end{equation}
and $\vp\big( X_{i_1}\cdots X_{i_r}\big)=0$ whenever $r$ is an odd integer.
\end{definition}

Semicircular processes are nothing but the NC analog of the Gaussian processes, as can be seen from the so-called \emph{free central limit theorem} (see e.g. \cite[Theorem 8.7]{nica-speicher}), where (classical) independence of random variables is replaced with the fundamental free independence property. It turns out that we will never appeal to this freeness property in the subsequent considerations, and therefore we refrain from elaborating on it.  

\smallskip

In the sequel, we will also be led to use the following convenient notation: for all $X_1,\ldots,X_{2m} \in \ca$ and for every pairing $\pi$ of $\{1,\ldots,2m\}$, 
\begin{equation*}
\vp_\pi\big( X_1,\ldots,X_{2m}\big):=\prod_{(p,q)\in \pi} \vp\big( X_{p}X_{q}\big) \, ,
\end{equation*}
which allows us to rewrite the NC Wick formula \eqref{form-wick-original} as
\begin{equation}\label{form-wick}
\vp\big( X_{i_1}\cdots X_{i_r}\big)=\sum_{\pi \in NC_2(r)} \vp_\pi\big( X_{i_1},\ldots,X_{i_r}\big) \ .
\end{equation}

Let us also label the following immediate consequence of \eqref{form-wick-original} for further reference:
\begin{proposition}\label{prop:wick}
Given a (centered) semicicular vector $(X_1,\ldots,X_r)$ in a NC probability space $(\ca,\vp)$, one has, for every $i=1,\ldots,r$,
\begin{align*}
\vp\big(X_1 \cdots X_r\big)=&\sum_{j<i} \vp\big(X_i X_j \big) \vp(X_1\cdots X_{j-1} X_{i+1} \cdots X_r \big)\vp\big(X_{j+1}\cdots X_{i-1}\big)\\
&+\sum_{j>i} \vp\big(X_i X_j) \vp\big(X_1\cdots X_{i-1}X_{j+1}\cdots X_r\big)\vp\big(X_{i+1}\cdots X_{j-1}\big) \, .
\end{align*}
\end{proposition}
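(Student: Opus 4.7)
The plan is to start from the NC Wick formula \eqref{form-wick} and, for fixed $i$, partition the sum over $NC_2(r)$ according to the partner of $i$ in the pairing:
$$\vp(X_1\cdots X_r) = \sum_{j\neq i}\ \sum_{\substack{\pi \in NC_2(r)\\ \{i,j\}\in \pi}} \vp_\pi(X_1,\ldots,X_r).$$
The content of the proposition is that each inner sum factors into three pieces corresponding to the pair $\{i,j\}$, the strictly interior indices, and the strictly exterior indices.

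The first main step is the combinatorial factorization. Fix $j<i$ and consider a non-crossing pairing $\pi$ of $\{1,\ldots,r\}$ with $\{i,j\}\in\pi$. Because $\pi$ is non-crossing, no pair $\{p,q\}\in\pi\setminus\{\{i,j\}\}$ can satisfy $p<j<q<i$ or $j<p<i<q$; equivalently, every other pair is contained either in the \emph{interior} block $I:=\{j+1,\ldots,i-1\}$ or in the \emph{exterior} block $E:=\{1,\ldots,j-1\}\cup\{i+1,\ldots,r\}$. Thus $\pi\setminus\{\{i,j\}\}$ decomposes as the disjoint union of a pairing $\pi_I$ of $I$ and a pairing $\pi_E$ of $E$, each of which is non-crossing with respect to the linear order inherited from $\{1,\ldots,r\}$. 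Conversely, any such pair $(\pi_I,\pi_E)$ together with $\{i,j\}$ yields a non-crossing pairing of $\{1,\ldots,r\}$, so the correspondence is a bijection between $\{\pi\in NC_2(r):\{i,j\}\in\pi\}$ and $NC_2(I)\times NC_2(E)$.

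The second main step is to read off the resulting identity. Under this bijection,
$$\vp_\pi(X_1,\ldots,X_r) = \vp(X_i X_j)\,\vp_{\pi_I}(X_{j+1},\ldots,X_{i-1})\,\vp_{\pi_E}(X_1,\ldots,X_{j-1},X_{i+1},\ldots,X_r),$$
since the product defining $\vp_\pi$ factors over the three blocks. Summing over $\pi_I\in NC_2(I)$ and $\pi_E\in NC_2(E)$ and reapplying the Wick formula \eqref{form-wick} to each block yields
$$\vp(X_i X_j)\,\vp(X_{j+1}\cdots X_{i-1})\,\vp(X_1\cdots X_{j-1}X_{i+1}\cdots X_r).$$
The analogous treatment for $j>i$ (with interior $\{i+1,\ldots,j-1\}$ and exterior $\{1,\ldots,i-1\}\cup\{j+1,\ldots,r\}$) produces the second sum in the statement. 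Summing over all $j\neq i$ gives the claim.

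The one step requiring actual care is the non-crossing factorization, and more specifically the verification that $NC_2(E)$ is naturally identified with $NC_2$ of the relabelled linear set $\{1,\ldots,|E|\}$ (so that the Wick formula can be invoked for $\vp(X_1\cdots X_{j-1}X_{i+1}\cdots X_r)$). This follows because non-crossingness only depends on the linear order, and concatenating $\{1,\ldots,j-1\}$ with $\{i+1,\ldots,r\}$ preserves that order. Everything else is a direct rearrangement of the Wick sum.
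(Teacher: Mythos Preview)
Your proof is correct and follows exactly the route the paper intends: the paper states this proposition as ``an immediate consequence of \eqref{form-wick-original}'' and gives no separate argument, and your partition of $NC_2(r)$ according to the partner of $i$, together with the interior/exterior factorization forced by non-crossingness, is precisely the standard unpacking of that claim.
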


\subsection{NC Malliavin calculus}

\

\smallskip

Let us observe first that in \cite{biane-speicher} (or in \cite{knps}), the introduction of NC Malliavin calculus strongly leans on the possible representation, in law, of any semicircular process as a path with values in the so-called \emph{full Fock space}. In particular, the basic derivative and divergence operators are therein defined as maps acting on this full Fock space (see \cite[Definitions 5.1.1 and 5.1.2]{biane-speicher} or \cite[Definition 3.3]{knps}).

\smallskip

For the reader's convenience, and also to make the analogy with classical Malliavin calculus even more obvious, we have here preferred to reformulate the whole presentation \emph{independently of any particular representation of the process}.

\smallskip

\textit{Therefore, from now on and for the rest of Section \ref{sec:skoro-int}, we fix a general NC probability space $(\ca,\vp)$, as well as a generic centered semicircular process $\{X_t, \, t\in [0,T]\}$ on $\ca$, for some finite time horizon $T> 0$.}

\smallskip

\textit{Let us also specify that we will essentially restrict our attention to the unital subalgebra $\ca_X$ of $\ca$ generated by $\{X_t, \, t\in [0,T]\}$.}

\

Just as in the classical Malliavin calculus theory, we will focus on the Hilbert space $\ch$ associated with $X$, that is $\ch$ is the completion of the space of elementary functions with respect to the product
\begin{equation}\label{defi-ch}
\langle \1_{[0,s]},\1_{[0,t]}\rangle_{\ch}=\vp \big( X_s X_t\big) \, .
\end{equation}

\smallskip

For more clarity, we will henceforth denote the space of elementary functions by
$$\ce([0,T];\R):=\Big\{h=\sum_{i=1}^r \al_i \1_{[a_i,b_i)} \, , \, r\geq 1 \, , \, \al_i \in \R \, , \, 0\leq a_i<b_i\leq T\Big\} \, ,$$
and then, for any algebra $E$ (whether $\ca_X$, $\ca_X\otimes \ca_X$,...), we set
$$\ce([0,T];E):=\Big\{t\mapsto \sum_{i=1}^r x_i h_i(t) \, , \, r \geq 1 \, , \, x_i\in E \, , \, h_i\in \ce([0,T];\R) \Big\} \, ,$$
$$\ce([0,T]^2;E):=\Big\{(s,t)\mapsto \sum_{i=1}^r x_i h_i(s)k_i(t) \, , \, r\geq 1 \, , \, x_i\in E \, , \, h_i,k_i\in \ce([0,T];\R) \Big\} \, .$$
For any $h=\sum_{i=1}^r \al_i \1_{[a_i,b_i)} \in \ce([0,T];\R)$, we set as usual 
$$X(h):=\sum_{i=1}^r \al_i  (X_{b_i}-X_{a_i})\, .$$
Finally, we extend the product $\langle .,.\rangle_{\ch}$ along the following natural rules: for all $x,y\in E$ and $h_1,h_2\in \ce([0,T];\R)$,
$$\langle x h_1,h_2\rangle_{\ch}=\langle h_1,xh_2\rangle_{\ch}:=x \langle h_1,h_2\rangle_{\ch} \, \in E \, ,$$
$$\langle x h_1;y h_2\rangle_{\ch}:=x y\langle h_1,h_2\rangle_{\ch} \, \in E \, .$$

\

Taking the elementary differentiation identity \eqref{deriv-rule-tensor-product} into account, the following definition for the \enquote{NC derivative operator} logically arises :

\begin{definition}\label{def:deriva}
The \emph{derivative operator} $D^X$ on $\ca_X$ is defined as the linear map $D: \ca_X \to \ce([0,T];\ca_X \otimes \ca_X)$ such that $D^X 1=0$ and, if $F:=X(h_1)\cdots X(h_m) \in \ca_X$, 
$$D^X F:=\sum_{i=1}^m \Big[ \big(X(h_1)\cdots X(h_{i-1}) \big) \otimes \big( X(h_{i+1})\cdots X(h_m)\big) \Big] h_i  \, ,$$
using the convention $X(h_1)\cdots X(h_0)=X(h_{m+1})\cdots X(h_m):=1$.
\end{definition}

Note that, along this formalism, one has in particular $D^X X(h)=(1\otimes 1) \, h$, as well as the expected derivation rules
\begin{equation}\label{deriv-p-x-h}
D^X P(X(h))=\partial P (X(h)) \, h \ , \quad D^X(FG)=D^XF \cdot G+F \cdot D^X G \ ,
\end{equation}
where the product $\cdot$ in the latter identity must naturally be understood through (the linear extension of) the formulas
$$(F_1\otimes F_2) \cdot F_3:=F_1\otimes (F_2F_3) \ , \quad F_1 \cdot (F_2\otimes F_3):= (F_1 F_2)\otimes F_3 \ .$$

\

If $F_1,F_2\in \ca_X$, we also define the \emph{partial derivatives} $D^X_i(F_1\otimes F_2) \in \ce([0,T];\ca_X^{\otimes 3})$ ($i=1,2$) as the linear extension of the formulas
$$D^X_1\big( F_1\otimes F_2\big):= D^X F_1\otimes F_2\quad  , \quad D^X_2\big( F_1\otimes F_2\big):= F_1\otimes D^X F_2  \, ,$$
and then set
$$D^X\big( F_1\otimes F_2\big):=D^X_1\big( F_1\otimes F_2\big)+D^X_2\big( F_1\otimes F_2\big)  \  \in \ce([0,T];\ca_X^{\otimes 3}) \, .$$

\

Just as in classical Malliavin calculus, the next challenge is to find a suitable candidate for the \enquote{dual operator} of $D^X$, or otherwise stated the \emph{divergence operator}. Remember that in the commutative setting, the divergence operator $\delta^{\text{com}}$ can be defined on elementary processes as follows (see \cite[Identity (1.44)]{nualart-book}): given a smooth (classical) random variable $F:\Omega \to \R$ and a path $h\in \ce([0,T];\R)$,
\begin{equation}\label{div-comm}
\delta^{\text{com}}(F \, h):=F X(h)-\langle D^{\text{com}}F,h\rangle_\ch \, ,
\end{equation}
where $D^{\text{com}}$ refers to the standard derivative operator (in the commutative setting).

\smallskip

The NC version of \eqref{div-comm} now reads as follows:

\begin{definition}\label{defi:div-op}
For all $\bof \in \ca_X^{\otimes 2}$ and $h\in \ce([0,T];\R)$, we define the \emph{divergence} (or \emph{Skorohod integral}) of $\bof \, h$ with respect to $X$ by the formula
\begin{equation}\label{defi-diver}
\delta^X\big( \bof \, h\big):=\bof \sharp X(h)-\big(\emph{\id} \times \vp \times \emph{\id}\big) \big(\langle D^X \bof,h\rangle_{\ch} \big) \, ,
\end{equation}
and then linearly extend this definition to any $\mathbf{U}\in \ce([0,T];\ca_X^{\otimes 2})$. Note that the operator $\emph{\id}\times \vp \times \emph{\id}$ refers here to the extension of the formula
\begin{equation}\label{conven-prod}
\big(\emph{\id} \times \vp \times \emph{\id}\big) \big(F_1 \otimes F_2\otimes F_3\big):=\vp\big( F_2\big) F_1 F_3 \, .
\end{equation}
\end{definition}

\smallskip

\begin{remark}
The notation in \eqref{conven-prod} follows the simplified convention that we have initiated in \eqref{product-trace}, i.e. we use the symbol $\times$ for the product operation. To be more specific, using the standard notations
$$(\id \otimes \vp \otimes \id ) ( F_1\otimes F_2\otimes F_3):=F_1 \otimes \vp(F_2) \otimes F_3\quad \text{and}\quad m:\ca \otimes \ca \to \ca, \, F\otimes G \mapsto FG \ ,$$
one has here
$$\id \times \vp \times \id=m \circ (\id \otimes \vp \otimes \id) \, .  $$
\end{remark}

\begin{remark}
Consider for instance the case where $T=1$ and $\ch=L^2([0,1])$. Then, for every polynomials $P,Q$ and taking $h:=\mathbf{1}_{[0,1]}$, identity \eqref{defi-diver} reads as
$$\delta^X\big( P(X_1)\otimes Q(X_1)\mathbf{1}_{[0,1]}\big)=P(X_1)X_1Q(X_1)-\big(\id \times \vp \times \id\big)  \big( \partial P(X_1)\otimes Q(X_1)+P(X_1)\otimes \partial Q(X_1)\big) \, ,$$
where we recall that the operator $\partial$ has been defined in \eqref{partial-op}. This shows that, in the non-dynamic setting, Definition \ref{defi:div-op} is in accordance with the result of \cite[Proposition 4.6]{Voi98}.
\end{remark}

\smallskip

As expected, the above definition \eqref{defi-diver} of $\der^X$ is fully justified by a \emph{dual formula}. Remember that in classical probability, the dual formula can be roughly stated (see \cite[Formula (1.42)]{nualart-book}) as
$$\mathbb{E} \big[ Y \delta^{\text{com}}(U)\big]=\mathbb{E}\big[ \langle D^{\text{com}}Y,u\rangle_\ch \big] \, . $$
The NC version of the identity takes a very similar shape:
\begin{proposition}\label{prop:dual-formula}
For all $Y\in \ca_X$ and $\mathbf{U}\in \ce([0,T];\ca_X^{\otimes 2})$, it holds that
\begin{equation}\label{dual-formula}
\vp\big( Y \delta^X\big( \mathbf{U}\big)^\ast \big)=\big( \vp \times \vp\big) \big( \langle D^X Y;\mathbf{U}^{\ast} \rangle_{\ch} \big) \, .
\end{equation}
\end{proposition}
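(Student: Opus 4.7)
The plan is to verify \eqref{dual-formula} by bilinearity and density. Since both sides are linear in $Y$ and in $\mathbf{U}$, and since $\ca_X$ is spanned by monomials of the form $X(h_1)\cdots X(h_m)$, it suffices to prove the identity for $Y = X(h_1)\cdots X(h_m)$ and $\mathbf{U} = (A\otimes B)\, h$ with $A = X(k_1)\cdots X(k_p)$, $B = X(l_1)\cdots X(l_q)$, and $h\in \ce([0,T];\R)$. Using that $X(h)$ is self-adjoint and that the $\ast$-operation on $\ca_X^{\otimes 2}$ reverses tensor slots, one unfolds Definition \ref{defi:div-op} as
\begin{equation*}
\delta^X(\mathbf{U})^\ast = B^\ast X(h) A^\ast - \big(\id \times \vp \times \id\big)\big(\langle D^X(A\otimes B), h\rangle_\ch\big)^\ast ,
\end{equation*}
while, by Definition \ref{def:deriva}, the right-hand side of \eqref{dual-formula} becomes
\begin{equation*}
\sum_{j=1}^m \vp\big( X(h_1)\cdots X(h_{j-1}) A^\ast\big)\, \vp\big( X(h_{j+1})\cdots X(h_m) B^\ast\big)\, \langle h_j, h\rangle_\ch .
\end{equation*}

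The next step is to expand the dominant term $\vp(Y\cdot B^\ast X(h) A^\ast)$ through the NC Wick formula \eqref{form-wick}, applied to the semicircular word
\begin{equation*}
X(h_1),\ldots,X(h_m),\, X(l_q),\ldots,X(l_1),\, X(h),\, X(k_p),\ldots,X(k_1) ,
\end{equation*}
and to sort the resulting non-crossing pair partitions according to the partner of the distinguished factor $X(h)$. Invoking Proposition \ref{prop:wick} with $X(h)$ playing the role of $X_i$ yields a clean trichotomy: (a) $X(h)$ is paired with some $X(h_j)$ inside $Y$; (b) $X(h)$ is paired with some $X(k_i)$ inside $A^\ast$; (c) $X(h)$ is paired with some $X(l_j)$ inside $B^\ast$. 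In each case the non-crossing constraint forces the factors trapped between $X(h)$ and its partner to pair among themselves, hence each contribution factorizes as a product of two $\vp$-moments.

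Case (a) contributes $\langle h_j, h\rangle_\ch$ times $\vp(X(h_1)\cdots X(h_{j-1}) A^\ast)\,\vp(X(h_{j+1})\cdots X(h_m) B^\ast)$, which reproduces the right-hand side displayed above. Case (b) yields, for each $i$, a factor $\langle k_i, h\rangle_\ch$ times the trapped moment $\vp(X(k_{i+1})\cdots X(k_p))$ and the outer moment $\vp(Y\cdot B^\ast X(k_{i-1})^\ast\cdots X(k_1)^\ast)$; summing over $i$ one recognizes precisely $\vp\big(Y\cdot [(\id \times \vp \times \id)(\langle D^X_1 (A\otimes B), h\rangle_\ch)]^\ast\big)$, and therefore this case cancels the $D^X_1$ piece of the correction term in $\delta^X(\mathbf{U})^\ast$. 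Case (c) matches the $D^X_2$ piece by the symmetric argument. Collecting the three cases produces exactly \eqref{dual-formula}.

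The main technical point is the bookkeeping in cases (b) and (c): one must check that the non-crossing restriction on the pair partitions precisely accounts for the appearance of $\vp$ on the middle slot in \eqref{defi-diver}, i.e.\ that summing the Wick contributions where $X(h)$ is paired inside $A^\ast$ genuinely reconstructs the trace $\vp(X(k_{i+1})\cdots X(k_p))$ appearing in $(\id\times\vp\times\id)$, with the correct ordering after conjugation. A convenient way to organize this is to iterate Proposition \ref{prop:wick}: isolate first the partner of $X(h)$ and then recognize, via Wick again, the trapped semicircular sub-moment as the $\vp$ factor in \eqref{conven-prod}. Self-adjointness of $X_t$ absorbs the orderings introduced by the adjoints on $A$ and $B$, which closes the argument.
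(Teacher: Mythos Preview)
Your argument is correct and follows essentially the same route as the paper: reduce by linearity to monomials $Y=X(h_1)\cdots X(h_m)$ and $\mathbf{U}=(A\otimes B)h$, expand $\vp\big(Y\,B^\ast X(h)A^\ast\big)$ via Proposition~\ref{prop:wick} by isolating the partner of $X(h)$, and observe that the pairings of $X(h)$ with letters of $A^\ast$ or $B^\ast$ cancel exactly the $(\id\times\vp\times\id)$ correction, while the pairings with letters of $Y$ yield the right-hand side. One small slip: in this paper the $\ast$-operation on $\ca_X^{\otimes 2}$ is defined by $(F_1\otimes F_2)^\ast=F_1^\ast\otimes F_2^\ast$ and does \emph{not} reverse tensor slots; your displayed formula for $\delta^X(\mathbf{U})^\ast$ is nonetheless correct because that adjoint is taken in $\ca_X$, and your expansion of the right-hand side is consistent with the paper's convention, so the error is purely in the phrasing.
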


\begin{proof}
See Section \ref{subsec:proof-dual-for}.
\end{proof}


\begin{remark}
Identity \eqref{dual-formula} actually corresponds to a characterization of the divergence operator $\delta^X: \ce([0,T];\ca_X^{\otimes 2}) \to \ca_X$ introduced in Definition \ref{defi:div-op}. In other words, for every fixed $\mathbf{U}\in \ce([0,T];\ca_X^{\otimes 2})$, $\delta^X(\bu)$ is the unique element in $\ca_X$ such that identity \eqref{dual-formula} is satisfied for every $Y\in \ca_X$. Indeed, if an element $Z_{\bu}$ also satisfies this property, we get in particular that $\vp\big( Y (\delta^X(\bu)-Z_{\bu})^\ast\big)=0$ for every $Y\in \ca_X$, and so, choosing $Y:=\delta^X(\bu)-Z_{\bu}$, we can then use the non-degeneracy property of $\vp$ to conclude that $Z_{\bu}=\delta^X(\bu)$.
\end{remark}

\begin{remark}\label{rk:differ-1}
The previous Definition \ref{def:deriva} of the derivative operator immediately coincides with the one provided in \cite{biane-speicher} (see in particular \cite[Proposition 5.2.1]{biane-speicher}). On the other hand, in the latter reference, the definition of the divergence operator $\delta^X$ is only done through the identification of $\ca_X$ with a subspace of the full Fock space (see \cite[Definition 5.1.2]{biane-speicher}). Our Definition \ref{defi:div-op} is thus more intrinsic, and closer to the classical definition of the divergence operator in the commutative case. This remark holds true for the dual formula \eqref{dual-formula} as well. 
\end{remark}

\smallskip

Our next objective is to exhibit some possible isometry property for the (NC) Skorohod integral. In other words, we are here looking for the NC analog of the classical formula (see \cite[Formula (1.45)]{nualart-book})
\begin{equation}\label{iso-comm}
\mathbb{E}\big[\delta^{\text{com}}(U)\delta^{\text{com}}(V)\big]=\mathbb{E}\big[ \langle U,V\rangle_\ch \big]+\mathbb{E}\big[\text{Tr}(D^{\text{com}}U \circ D^{\text{com}}V)\big] \, ,
\end{equation}
where $U$ and $V$ are both smooth (classical) random variables. Let us recall that, in \eqref{iso-comm}, $D^{\text{com}}U$ and $D^{\text{com}}V$ are identified as random Hilbert-Schmidt operators from $\ch$ to $\ch$: the notation $\circ$ then stands for the composition of operators, while the notation $\text{Tr}$ refers to the usual trace of operators.

\smallskip

Here is now the desired NC counterpart of formula \eqref{iso-comm}:

\begin{proposition}\label{prop:ito-isometry}
For all elementary biprocesses $\bu,\bv\in \ce([0,T];\ca_X^{\otimes 2})$, it holds that
\begin{equation}\label{ito-isometry}
\vp\big( \delta^X\big( \bu \big)\delta^X\big( \bv\big)^\ast\big) =\big(\vp\times \vp\big) \big( \langle \bu;\bv^\ast \rangle_\ch \big)+\big( \vp \times \vp \times \vp\big) \big( \mathcal{T}_{\ch}(\bu,\bv \big) \big) \, ,
\end{equation}
where the operator $\mathcal{T}_\ch: \ce([0,T];\ca_X^{\otimes 2}) \times \ce([0,T];\ca_X^{\otimes 2}) \to \ca_X^{\otimes 3}$ is defined as (the bilinear extension of)
$$\mathcal{T}_\ch \big( \mathbf{F}\, h,\mathbf{G}\,  k \big):=\langle D^X_1\mathbf{F},k\rangle_\ch \cdot \langle (D^X_2 \mathbf{G})^\ast ,h \rangle_\ch+\langle D^X_2\mathbf{F},k\rangle_\ch \cdot \langle (D^X_1 \mathbf{G})^\ast ,h \rangle_\ch \, .$$
\end{proposition}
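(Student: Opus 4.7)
My plan is to follow the classical strategy for proving the isometry of the Skorohod integral, namely a double application of the dual formula \eqref{dual-formula} together with a suitable commutation relation between $D^X$ and $\delta^X$, adapted to the tensor-valued NC setting.

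By the bilinearity of both sides of \eqref{ito-isometry} in $\bu$ and $\bv$, I can first reduce to the case $\bu=\bof\,h$ and $\bv=\mathbf{G}\,k$ for simple tensors $\bof=F_1\otimes F_2$, $\mathbf{G}=G_1\otimes G_2$ in $\ca_X^{\otimes 2}$ and elementary functions $h,k\in \ce([0,T];\R)$, with $F_i,G_j$ of the monomial form $X(h_1)\cdots X(h_m)$ used in Definition \ref{def:deriva}. Applying Proposition \ref{prop:dual-formula} with $Y:=\delta^X(\bu)$ gives the starting point
$$
\vp\bigl(\delta^X(\bu)\,\delta^X(\bv)^\ast\bigr)=(\vp\times\vp)\bigl(\langle D^X\delta^X(\bu),\bv^\ast\rangle_{\ch}\bigr),
$$
so the problem is reduced to computing $D^X\delta^X(\bu)$ and pairing it with $\bv^\ast=(G_1^\ast\otimes G_2^\ast)\,k$.

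Next I would expand $\delta^X(\bof\,h)=\bof\sharp X(h)-(\id\times\vp\times\id)(\langle D^X\bof,h\rangle_\ch)$ and apply the Leibniz rule \eqref{deriv-p-x-h} to $D^X(F_1 X(h)F_2)$, using the crucial identity $D^X X(h)=(1\otimes 1)h$. This produces three pieces at time $t$:
$$
D^X_t F_1\cdot X(h)F_2\;+\;h(t)\,F_1\otimes F_2\;+\;F_1 X(h)\cdot D^X_t F_2,
$$
from which I must subtract $D^X_t$ of the correction term. The point is that the first and third pieces, combined with the derivatives of the correction term, can be re-identified as $\delta^X$ applied to $D^X_1\bof$ and $D^X_2\bof$ evaluated at the extra time variable; this yields a NC commutation relation of the schematic form
$$
D^X_t\delta^X(\bof\,h)=h(t)\,\bof+\bigl(\text{divergence of }D^X_{(\cdot)}\bof\text{-terms}\bigr),
$$
with the divergence being understood componentwise in the free tensor factors.

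Pairing this with $\bv^\ast=\mathbf{G}^\ast k$ and applying $\vp\times\vp$, the contribution of the central term $h(t)\,\bof$ is precisely $\langle h,k\rangle_\ch\,(\vp\times\vp)(\bof\cdot \mathbf{G}^\ast)=(\vp\times\vp)(\langle\bu;\bv^\ast\rangle_\ch)$, giving the first summand on the right-hand side of \eqref{ito-isometry}. For the remaining divergence-type pieces, I would invoke the dual formula \eqref{dual-formula} a second time—now on the inner divergence, with the \enquote{test} random variables built from $G_1,G_2$—to convert them into cross-derivative expressions $\langle D^X F_i,k\rangle_\ch$ paired against $\langle (D^X G_j)^\ast,h\rangle_\ch$, which is exactly the structure of $\mathcal{T}_\ch(\bu,\bv)$ in the two summands corresponding to $(D^X_1\bof,D^X_2\mathbf{G})$ and $(D^X_2\bof,D^X_1\mathbf{G})$.

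The main obstacle is the tensor-position bookkeeping in step four. Unlike the commutative case, where the inner product collapses everything, here each factor in $\ca_X^{\otimes 2}$ of $\bof$ splits into $\ca_X^{\otimes 3}$ under $D^X$, so after pairing we land in $\ca_X^{\otimes 4}$, and it is the combined action of $\vp\times\vp$ on the LHS plus the second duality application that contracts one slot and reassembles the remaining three factors into the $\ca_X^{\otimes 3}$-valued object $\mathcal{T}_\ch(\bu,\bv)$. Verifying that the slots match precisely the positions prescribed by $D^X_1,D^X_2$ in the definition of $\mathcal{T}_\ch$—and that no cross term is lost through the traciality of $\vp$—is the delicate part, and I expect it to be handled by a careful case analysis on which factor of $\bof$ the derivative $D^X$ hits, with parallel treatment for $\mathbf{G}^\ast$.
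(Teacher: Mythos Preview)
Your first two steps coincide with the paper's: reduce to monomial $\bof,\mathbf{G}$, apply the dual formula once to obtain $(\vp\times\vp)(\langle D^X\delta^X(\bu);\bv^\ast\rangle_\ch)$, and write $D^X\delta^X(\bu)=\bu+\mathcal{R}(\bu)$ with $\mathcal{R}(\bu)=\mathcal{R}_1(\bu)-\mathcal{R}_2(\bu)$. The routes diverge in handling $\mathcal{R}$. The paper does \emph{not} recognise $\mathcal{R}(\bu)$ as a divergence or invoke duality a second time; instead it expands $(\vp\times\vp)(\langle\mathcal{R}_1(\bu);\bv^\ast\rangle_\ch)$ explicitly and applies the Wick identity of Proposition~\ref{prop:wick} to the factors $\vp(\cdots X(h)\cdots G_i^\ast)$, so that pairing $X(h)$ against factors inside $F_1,F_2$ produces terms that cancel exactly against the explicit expansion of $(\vp\times\vp)(\langle\mathcal{R}_2(\bu);\bv^\ast\rangle_\ch)$, while pairing $X(h)$ against factors of $G_1^\ast,G_2^\ast$ yields the surviving quantities $\mathcal{Q}_1,\mathcal{Q}_2$, which are then identified with the two summands of $\mathcal{T}_\ch$. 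Your route---establishing an NC commutation identity of the form $\mathcal{R}(\bu)_t=\delta^X_{2,3}\big((D^X_{1,t}\bof)\,h\big)+\delta^X_{1,2}\big((D^X_{2,t}\bof)\,h\big)$ for slot-wise divergences on $\ca_X^{\otimes 3}$ and then applying duality again---is the direct NC analogue of the classical argument and packages the cancellation more conceptually (the $F$-internal Wick pairings are absorbed into the correction term of the slot-wise $\delta^X$ rather than matched by hand), but at the cost of defining those extensions of $\delta^X$ and proving the corresponding dual formulas, none of which the paper introduces. The paper pays instead with a longer but entirely self-contained term-matching computation; neither approach escapes the tensor-position bookkeeping you correctly flag as the crux.
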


\begin{proof}
See Section \ref{subsec:proof-iso}.
\end{proof}

\smallskip

\begin{remark}
To make the analogy between formulas \eqref{iso-comm} and \eqref{ito-isometry} even more clear, let us notice that in the commutative setting, and when taking $U=F \, h$, $V=G\, k$, with $F,G$ two smooth random variables and $h,k\in \ch$, one has
$$\text{Tr}(D^{\text{com}}U \circ D^{\text{com}}V)=\langle D^{\text{com}}F,k\rangle_\ch \langle D^{\text{com}}G,h\rangle_\ch \, .$$
\end{remark}


\smallskip

\begin{remark}\label{rk:differ-2}
The above Proposition \ref{prop:ito-isometry} corresponds to the extension of the result of \cite[Proposition 5.4.2]{biane-speicher} (which only applies to the free Brownian motion, and in the specific Wigner chaos setting) to a general semicircular process.
\end{remark}

\smallskip

Let us finally recall that in the commutative setting, formula \eqref{iso-comm} immediately provides us with the classical (and very useful) continuous inclusion \enquote{$\mathbb{D}^{1,2}(\ch)\subset \text{Dom} \, \der^{\text{com}}$} (see \cite[Proposition 1.3.1]{nualart-book}). Our final objective in this preliminary section is to exhibit a similar continuous inclusion for the NC setting, starting from formula \eqref{ito-isometry}. For a compact expression of the desired bound, let us set, for all elementary biprocesses $\bu,\bv\in \ce([0,T];\ca_X^{\otimes 2})$,
$$\langle \bu,\bv \rangle_{L^2(\vp\times \vp;\ch)}:=\big(\vp \times \vp\big) \big( \langle \bu;\bv^{\ast }\rangle_{\ch} \big) \, ,$$
and for all elementary triprocesses $\mathbb{U}^{(1)},\mathbb{V}^{(1)},\mathbb{U}^{(2)},\mathbb{V}^{(2)} \in \ce([0,T]^2;\ca_X^{\otimes 3})$, define
\small
\begin{align*}
&\big\langle \begin{pmatrix}
\mathbb{U}^{(1)}\\
\mathbb{V}^{(1)}
\end{pmatrix}, 
\begin{pmatrix}
\mathbb{U}^{(2)}\\
\mathbb{V}^{(2)}
\end{pmatrix}\big\rangle_{L^2(\vp\times \vp\times \vp;\ch\otimes \ch)^2}\\
&:=\big( \vp\times \vp\times \vp\big)\big(\big\langle \mathbb{U}^{(1)},\mathbb{U}^{(2)}\big\rangle_{\ch\otimes \ch}\big)
+\big( \vp\times \vp\times \vp\big)\big(\big\langle \mathbb{V}^{(1)},\mathbb{V}^{(2)}\big\rangle_{\ch\otimes \ch}\big)\, ,
\end{align*}
\normalsize
where the inner product $\langle.,.\rangle_{\ch\otimes \ch}$ refers to (the multilinear extension of)
\begin{align*}
&\big\langle \big((s,t)\mapsto\mathbb{F}\, h_s\, k_t\big), \big( (s,t)\mapsto\mathbb{G} \, \ell_s\, m_t\big)\big\rangle_{\ch\otimes \ch}:=\big( \mathbb{F}\cdot\mathbb{G}^\ast \big) \langle h,\ell\rangle_{\ch} \langle k,m\rangle_\ch \, .
\end{align*}
Finally, for every elementary biprocess $\bu\in \ce([0,T];\ca_X^{\otimes 2})$, let us set
$$\nabla^X \bu:=\begin{pmatrix} D^X_1 \bu\\ D^X_2\bu \end{pmatrix} \in \ce([0,T]^2;\ca_X^{\otimes 3})^2 \, .$$
We are now ready to state the expected estimate (see \cite[Estimate (1.47)]{nualart-book} for the commutative counterpart of this property):
\begin{corollary}\label{coro:bound-ito}
For every elementary biprocess $\bu\in \ce([0,T];\ca_X^{\otimes 2})$, it holds that
\begin{equation}\label{bound-ito}
\big\| \delta^X(\bu)\big\|_{L^2(\vp)} \leq \big\| \bu\big\|_{\mathbb{D}^{1,2}(\ch)} \, ,
\end{equation}
where $\big\| \bu\big\|_{\mathbb{D}^{1,2}(\ch)}$ is defined as
\begin{equation}\label{defi:d-1-2}
\big\| \bu\big\|_{\mathbb{D}^{1,2}(\ch)}^2:=\big\| \bu \big\|_{L^2(\vp\times \vp;\ch)}^2+\big\| \nabla^X \bu \big\|_{L^2(\vp\times \vp\times \vp;\ch\otimes \ch)^2}^2 \, .
\end{equation}
\end{corollary}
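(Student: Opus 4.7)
The plan is to specialize the isometry formula of Proposition~\ref{prop:ito-isometry} to $\bv = \bu$, which immediately produces
$$\|\delta^X(\bu)\|_{L^2(\vp)}^2 = \vp\big(\delta^X(\bu)\,\delta^X(\bu)^\ast\big) = \|\bu\|_{L^2(\vp\times\vp;\ch)}^2 + \big(\vp\times\vp\times\vp\big)\big(\mathcal{T}_\ch(\bu,\bu)\big).$$
In view of the definition \eqref{defi:d-1-2} of $\|\cdot\|_{\mathbb{D}^{1,2}(\ch)}$, establishing \eqref{bound-ito} then reduces to the Cauchy--Schwarz-type estimate
$$\big(\vp\times\vp\times\vp\big)\big(\mathcal{T}_\ch(\bu,\bu)\big) \leq \|D^X_1\bu\|_{L^2(\vp\times\vp\times\vp;\ch\otimes\ch)}^2 + \|D^X_2\bu\|_{L^2(\vp\times\vp\times\vp;\ch\otimes\ch)}^2.$$

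To handle this last estimate, I would first use a Gram--Schmidt procedure to rewrite $\bu = \sum_{i=1}^N \bof_i\, h_i$ with the $h_i\in\ce([0,T];\R)$ forming an orthonormal system in $\ch$. Setting then $M_{ij}:=\langle D^X_1 \bof_i, h_j\rangle_{\ch}$ and $N_{ij}:=\langle D^X_2 \bof_i, h_j\rangle_{\ch}$, both viewed as elements of $\ca_X^{\otimes 3}$, and exploiting the identity $\langle (D^X_i\bof)^\ast,h\rangle_{\ch} = \langle D^X_i\bof,h\rangle_{\ch}^\ast$ valid for real-valued $h$, a direct expansion of the two symmetric summands in the definition of $\mathcal{T}_\ch$ leads to
$$\big(\vp\times\vp\times\vp\big)\big(\mathcal{T}_\ch(\bu,\bu)\big) = 2\,\mathrm{Re}\sum_{i,j=1}^N \langle M_{ij}, N_{ji}\rangle_{L^2(\vp\times\vp\times\vp)},$$
where the inner product on the right is the natural one induced on $\ca_X^{\otimes 3}$ by the positive trace $\vp\times\vp\times\vp$. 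Cauchy--Schwarz in the finite direct sum $\bigoplus_{i,j} L^2(\vp\times\vp\times\vp)$, combined with the elementary inequality $2ab\leq a^2+b^2$, then bounds the right-hand side by $\sum_{i,j}\|M_{ij}\|^2 + \sum_{i,j}\|N_{ij}\|^2$.

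The last step is to identify each of these Hilbert--Schmidt-like sums with the desired norm of $\nabla^X\bu$. Using the orthonormality of $\{h_j\}$ together with Bessel's inequality in $\ch$, one gets $\sum_j \|M_{ij}\|^2_{L^2(\vp\times\vp\times\vp)} \leq \|D^X_1\bof_i\|_{L^2(\vp\times\vp\times\vp;\ch)}^2$, and summing over $i$, where orthonormality of $\{h_i\}$ is invoked a second time, yields $\sum_{i,j}\|M_{ij}\|^2 \leq \|D^X_1\bu\|_{L^2(\vp\times\vp\times\vp;\ch\otimes\ch)}^2$, with a symmetric conclusion for $N$. Plugging this back into the previous bound produces \eqref{bound-ito}.

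The main obstacle is really the bookkeeping hidden in the algebraic rewriting of $\mathcal{T}_\ch(\bu,\bu)$: one must carefully track how the three tensor slots of $\ca_X^{\otimes 3}$ interact with the pairings in $\ch$, and confirm that the reorganization as a cross-pairing between $D^X_1\bu$ and $D^X_2\bu$ is correct when the coefficients $\bof_i$ live in $\ca_X^{\otimes 2}$. Once this identification is in hand, the remaining ingredients---Gram--Schmidt, Cauchy--Schwarz in a finite direct sum, and Bessel's inequality---are completely classical.
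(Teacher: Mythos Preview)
Your proposal is correct and follows essentially the same strategy as the paper: apply the isometry formula with $\bv=\bu$, recognize $(\vp\times\vp\times\vp)(\mathcal{T}_\ch(\bu,\bu))$ as (twice the real part of) a cross-pairing between $D^X_1\bu$ and $D^X_2\bu$, and bound it via Cauchy--Schwarz and $2ab\leq a^2+b^2$. The only difference is packaging: the paper works directly in the abstract tensor Hilbert space $L^2(\vp\times\vp\times\vp)\otimes\ch\otimes\ch$, observing that $(\vp\times\vp\times\vp)(\mathcal{T}_\ch(\bu,\bu))=\langle D^X_1\bu,\sigma(D^X_2\bu)\rangle+\langle \sigma(D^X_2\bu),D^X_1\bu\rangle$ (with $\sigma$ the swap of the two $\ch$-factors, which is an isometry), so that a single application of Cauchy--Schwarz finishes the job. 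Your Gram--Schmidt and Bessel steps are a correct coordinatization of exactly this argument, but they are not needed---the detour through an orthonormal system and the Bessel bound merely reconstructs, in coordinates, the abstract Cauchy--Schwarz inequality that the paper invokes directly.
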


\begin{proof}
See Section \ref{subsec:proof-coro-boun}.
\end{proof}

Before we turn to the details of the proofs of the above properties, and as a conclusion to this presentation, let us adapt the classical definition of Skorohod integrability to the NC setting, along the following simple formulation:
\begin{definition}\label{defi:skoro-int}
For all $0\leq s\leq t\leq T$, we will say that a biprocess $\mathbf{U}:[0,T]\to \ca_X \otimes \ca_X$ is Skorohod-integrable with respect to $X$ on $[s,t]$ if, for any subdivision $\Delta_{st}:=\{s=r_0<\ldots <r_n=t\}$ with mesh $|\Delta_{st}|$ tending to $0$, and setting
\begin{equation}\label{bu-n}
\mathbf{U}^{\Delta_{st}}:=\sum_{i=0}^{n-1} \mathbf{U}_{r_i} \1_{[r_i,r_{i+1})} \, ,
\end{equation}
the sequence $\delta^X\big( \mathbf{U}^{\Delta_{st}} \big)$ converges in $L^2(\vp)$ as $|\Delta_{st}|\to 0$. In this case, we will denote the limit by $\delta^X_{s,t}\big( \mathbf{U}\big)$.
\end{definition}

\smallskip

As an immediate consequence of Corollary \ref{coro:bound-ito}, we derive the following practical integrability criterion (the analog of the classical continuous inclusion \enquote{$\mathbb{D}^{1,2}(\ch)\subset \text{Dom} \, \der^{\text{com}}$}):

\begin{proposition}\label{prop:crit-sko-int}
Consider a biprocess $\mathbf{U}:[0,T]\to \ca_X \otimes \ca_X$, and $0\leq s\leq t\leq T$. If, for any subdivision $\Delta_{st}:=\{s=r_0<\ldots <r_n=t\}$ with mesh $|\Delta_{st}|$ tending to $0$, the approximation $\bu^{\Delta_{st}}$ (defined by (\ref{bu-n})) is a Cauchy sequence with respect to $\|.\|_{\mathbb{D}^{1,2}(\ch)}$ (defined by (\ref{defi:d-1-2})), then $\bu$ is Skorohod-integrable with respect to $X$ on $[s,t]$.
\end{proposition}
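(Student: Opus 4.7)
The plan is to carry out the standard completion argument: use the continuous bound of Corollary \ref{coro:bound-ito} to transfer the Cauchy property from the norm $\|\cdot\|_{\mathbb{D}^{1,2}(\ch)}$ to $\|\cdot\|_{L^2(\vp)}$, and then invoke completeness of $L^2(\vp)$.

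First, I would observe that since each $\bu_{r_i}\in \ca_X\otimes \ca_X$ is a finite linear combination of simple tensors and $\1_{[r_i,r_{i+1})}$ is elementary, the approximation $\bu^{\Delta_{st}}$ (extended by $0$ outside $[s,t]$ to get a biprocess on $[0,T]$) belongs to $\ce([0,T];\ca_X^{\otimes 2})$; in particular, Corollary \ref{coro:bound-ito} applies to it.

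Next, I would note that $\delta^X$ is linear on $\ce([0,T];\ca_X^{\otimes 2})$, as an immediate consequence of definition \eqref{defi-diver}, and combine this with the bound \eqref{bound-ito} to obtain, for any two subdivisions $\Delta, \Delta'$ of $[s,t]$,
$$\big\| \delta^X(\bu^{\Delta}) - \delta^X(\bu^{\Delta'}) \big\|_{L^2(\vp)} = \big\| \delta^X(\bu^{\Delta} - \bu^{\Delta'}) \big\|_{L^2(\vp)} \leq \big\| \bu^{\Delta} - \bu^{\Delta'} \big\|_{\mathbb{D}^{1,2}(\ch)}.$$
Given a sequence of subdivisions $(\Delta_n)$ of $[s,t]$ with $|\Delta_n|\to 0$, the Cauchy hypothesis on $(\bu^{\Delta_n})$ then immediately implies that $(\delta^X(\bu^{\Delta_n}))$ is Cauchy in $L^2(\vp)$, and so converges to some limit $Y\in L^2(\vp)$.

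Finally, I would verify that this limit does not depend on the choice of approximating sequence, so that the notation $\delta^X_{s,t}(\bu)$ of Definition \ref{defi:skoro-int} is unambiguous. The standard trick is to interlace two arbitrary sequences $(\Delta_n)$ and $(\Delta'_n)$ (both with mesh tending to $0$) into the merged sequence $\Delta_1, \Delta'_1, \Delta_2, \Delta'_2, \ldots$, whose mesh still tends to $0$: by the very same argument, its image under $\delta^X$ is Cauchy in $L^2(\vp)$, and the two subsequences corresponding to $(\Delta_n)$ and $(\Delta'_n)$ necessarily share the same limit. I do not expect any real obstacle in the proof — the genuine work was already carried out in establishing \eqref{bound-ito}, and the present statement is simply the associated density/completion argument, transcribing to the NC setting the classical inclusion $\mathbb{D}^{1,2}(\ch)\subset \text{Dom}\,\delta^{\text{com}}$.
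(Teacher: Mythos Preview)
Your argument is correct and is precisely the standard completion argument the paper has in mind when it presents this proposition as ``an immediate consequence of Corollary~\ref{coro:bound-ito}'' without further proof. There is nothing to add: the paper does not spell out the details you have written, but your linearity-plus-\eqref{bound-ito} transfer of the Cauchy property to $L^2(\vp)$, followed by the interlacing trick for uniqueness of the limit, is exactly what is intended.
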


\



\subsection{Proof of Proposition \ref{prop:dual-formula}}\label{subsec:proof-dual-for}

\

\smallskip

Consider $Y=X(h_1)\cdots X(h_m)$ and $\bu=\bof h$, with $\bof=F_1\otimes F_2$, $F_1=X(f_1)\cdots X(f_n)$, $F_2=X(f_{n+1})\cdots X(f_{n+p})$. Then one has
$$\delta^X(\bu)=F_1X(h)F_2-\big( \id \times \vp\times \id\big) \big( \langle D^X\bof ,h\rangle_{\ch}\big) \, ,$$
with the explicit expansion
\begin{align}
&\Big(\big(\id \times \vp \times \id\big) \big(\langle D^X \bof,h\rangle_{\ch} \big)\Big)^\ast\nonumber\\
&=\bigg(\sum_{j=1}^n  \vp\big( X(f_{j+1})\cdots X(f_n) \big)\Big[ \big( X(f_1)\cdots X(f_{j-1})\big)  F_2 \Big] \vp\big( X(f_j)X(h)\big)\nonumber\\
&\hspace{0.5cm}+\sum_{k=1}^p \vp\big( X(f_{n+1})\cdots X(f_{n+k-1}) \big)\Big[ F_1  \big( X(f_{n+k+1}) \cdots X(f_{n+p}) \big) \Big] \vp\big( X(f_{n+k})X(h)\big)\bigg)^\ast\nonumber\\
&=\sum_{j=1}^n  \vp\big( X(f_{n})\cdots X(f_{j+1}) \big)\Big[F_2^\ast  \big( X(f_{j-1})\cdots X(f_{1})\big)  \Big] \vp\big( X(f_j)X(h)\big)\nonumber\\
&\hspace{0.5cm}+\sum_{k=1}^p \vp\big( X(f_{n+k-1})\cdots X(f_{n+1}) \big)\Big[ \big( X(f_{n+p}) \cdots X(f_{n+k+1}) \big)F_1^\ast \Big] \vp\big( X(f_{n+k})X(h)\big) \, .\label{i-tim-phi-tim-i}
\end{align}
Using the NC Wick formula (and more specifically the result of Proposition \ref{prop:wick}), we can write
\begin{align*}
&\vp\big(Y F_2^\ast X(h)F_1^\ast \big)=\vp\Big(\big(X(h_1)\cdots X(h_m)\big) \big(X(f_{n+p})\cdots X(f_{n+1})\big)X(h) \big(X(f_{n})\cdots X(f_{1})\big)   \Big)\\
&=\sum_{i=1}^m \vp\big( X(h_i)X(h)\big) \vp\big( \big(X(h_1)\cdots X(h_{i-1})\big)F_1^\ast\big)\vp\big( \big(X(h_{i+1})\cdots X(h_m)\big)F_2^\ast \big)\\
&\hspace{0.5cm}+\sum_{k=1}^p \vp\big( X(f_{n+k})X(h)\big)\vp\big(Y\big( X(f_{n+p})\cdots X(f_{n+k+1})\big)F_1^\ast\big) \vp\big( X(f_{n+k-1})\cdots X(f_{n+1}) \big)\\
&\hspace{0.5cm}+\sum_{j=1}^n \vp\big(X(f_j)X(h)\big) \vp\big( YF_2^\ast \big(X(f_{j-1})\cdots X(f_{1})\big) \big) \vp\big( X(f_{n}) \cdots X(f_{j+1})\big)
\end{align*}
and so, by comparing this expansion with \eqref{i-tim-phi-tim-i}, we deduce
\begin{align*}
&\vp\Big( Y \delta^X\big( \mathbf{U}\big)^\ast \Big)=\vp\big(Y F_2^\ast X(h)F_1^\ast \big)-\vp\Big( Y \Big(\big(\id \times \vp \times \id\big) \big(\langle D^X \bof,h\rangle_{\ch}\big) \Big)^\ast \Big)\\
&=\sum_{i=1}^m \vp\big( X(h_i)X(h)\big) \vp\big( \big(X(h_1)\cdots X(h_{i-1})\big)F_1^\ast\big)\vp\big( \big(X(h_{i+1})\cdots X(h_m)\big)F_2^\ast \big)\\
&=\big(\vp \times \vp\big) \bigg( \sum_{i=1}^m \big( \big(X(h_1)\cdots X(h_{i-1})\big)F_1^\ast\big) \otimes \big( \big(X(h_{i+1})\cdots X(h_m)\big)F_2^\ast \big) \langle h_i,h\rangle_{\ch} \bigg)\\
&=\big(\vp \times \vp\big) \bigg( \bigg\langle \sum_{i=1}^m \Big[\big( X(h_1)\cdots X(h_{i-1})\big) \otimes \big(X(h_{i+1})\cdots X(h_m) \big) \Big] h_i;\big(F_1^\ast \otimes F_2^\ast\big) h\bigg\rangle_{\ch} \bigg)\\
&=\big( \vp \times \vp\big) \big( \langle D^X Y;\mathbf{U}^{\ast} \rangle_{\ch} \big) \, ,
\end{align*}
which corresponds to the desired identity (\ref{dual-formula}).

\begin{flushright}
$\square$
\end{flushright}

\

\subsection{Proof of Proposition \ref{prop:ito-isometry}}\label{subsec:proof-iso}

\

\smallskip

It suffices to prove formula \eqref{ito-isometry} for elements $\bu,\bv$ of the form
$$\bu:=(F_1 \otimes F_2) \, h \quad , \quad \bv:=(G_1 \otimes G_2)\, k \, ,$$
with
$$F_1:=X(f_1) \cdots X(f_m) \quad , \quad F_2:=X(f_{m+1})\cdots X(f_{m+M}) \, ,$$
and
$$G_1:=X(g_1)\cdots X(g_n) \quad , \quad G_2:=X(g_{n+1})\cdots X(g_{n+N}) \, ,$$
for some elementary functions $h,f_1,\ldots f_{m+M}$ and $k,g_1,\ldots ,g_{n+N}$.

\smallskip

We can first apply the dual formula (\ref{dual-formula}) to write
\begin{equation}\label{dual-form-appli}
\vp\big( \delta^X(\bu) \delta^X(\bv)^\ast \big)=\big( \vp \times \vp \big) \big( \langle D^X \delta^X(\bu); \bv^\ast \rangle_{\ch} \big) \ .
\end{equation}
Let us then recall that
$$\der^X(\bu)=F_1 X(h)F_2-\big( \id \times \vp \times \id \big)\big( \langle D^X(F_1\otimes F_2),h \rangle_{\ch} \big) \ .$$
Using the derivation rules in \eqref{deriv-p-x-h}, we get that
$$D^X(F_1 X(h)F_2 )=D^X F_1 \cdot (X(h)F_2)+(F_1 \otimes F_2) h+(F_1 X(h))\cdot D^X F_2 \, ,$$
which yields the decomposition
$$D^X \delta^X(\bu)=\bu+\mathcal{R}(\bu) \ ,$$
with $\mathcal{R}(\bu):=\mathcal{R}_1(\bu)-\mathcal{R}_2(\bu)$, 
$$\mathcal{R}_1(\bu):=D^XF_1 \cdot (X(h)F_2)+(F_1 X(h))\cdot D^XF_2 \, $$
and
$$\mathcal{R}_2(\bu):=D^X\Big[ \big( \id \times \vp\times \id \big) \big( \langle D^X(F_1 \otimes F_2),h \rangle_\ch \big)\Big] \ .$$
Going back to (\ref{dual-form-appli}), we see that we are now left with the proof of the identity
\begin{equation}\label{desired-formula}
\big( \vp \times \vp \big) \big(\mathcal{R}(\bu) ; \bv^\ast \rangle_{\ch} \big)=\big( \vp \times \vp \times \vp\big) \big( \mathcal{T}_{\ch}(\bu,\bv \big) \big)\, .
\end{equation}




\smallskip

On the one hand, using only the very definition of $D^X$, one gets the expansion
\begin{align*}
&\big( \vp\times \vp\big) \big(\langle \mathcal{R}_1(\bu);\bv^\ast \rangle_{\ch}\big)\\
&=\sum_{i=1}^m \vp(X(f_i)X(k))\vp\big( X(f_1)\cdots X(f_{i-1})G_1^\ast\big) \vp\big( X(f_{i+1})\cdots X(f_m) X(h)F_2 G_2^\ast \big) \\
&+\sum_{j=1}^M \vp(X(f_{m+j})X(k)) \vp\big( F_1 X(h) X(f_{m+1})\cdots X(f_{m+j-1})G_1^\ast \big) \vp\big( X(f_{m+j+1})\cdots X(f_{m+M})G_2^\ast \big)   \, ,
\end{align*}
and applying the NC Wick formula (as stated in Proposition \ref{prop:wick}) to the expectation terms 
$$\vp\big( X(f_{i+1})\cdots X(f_m) X(h)F_2 G_2^\ast \big) \quad \text{and} \quad \vp\big( F_1 X(h) X(f_{m+1})\cdots X(f_{m+j-1})G_1^\ast \big) \, ,$$
we derive the decomposition
\begin{equation}\label{decompo-r-1}
\big( \vp\times \vp\big) \big(\langle \mathcal{R}_1(\bu);\bv^\ast \rangle_{\ch}\big)=I+II \, ,
\end{equation}
with
\begin{align*}
I:=&\sum_{i=1}^m \sum_{i'=i+1}^m  \vp(X(f_i)X(k))\vp(X(f_ {i'})X(h))\vp\big( X(f_1)\cdots X(f_{i-1})G_1^\ast\big) \\
&\hspace{4cm}\vp\big( X(f_{i+1})\cdots X(f_{i'-1})F_2 G_2^\ast \big)\vp\big( X(f_{i'+1})\cdots X(f_m)\big)\\
&+\sum_{i=1}^m\sum_{j=1}^M \vp(X(f_i)X(k))\vp(X(h)X(f_{m+j}))\vp\big( X(f_1)\cdots X(f_{i-1})G_1^\ast\big) \\
&\hspace{1cm}\vp\big( X(f_{i+1})\cdots X(f_m)X(f_{m+j+1})\cdots X(f_{m+M})G_2^\ast \big)\vp\big( X(f_{m+1})\cdots X(f_{m+j-1})\big)\\
&+\sum_{i=1}^m\sum_{k=0}^{N-1}  \vp(X(f_i)X(k)) \vp(X(h)X(g_{n+N-k}))\vp\big( X(f_1)\cdots X(f_{i-1})G_1^\ast\big) \\
&\hspace{0.5cm}\vp\big( X(f_{i+1}) \cdots X(f_m)X(g_{n+N-k-1})\cdots X(g_{n+1}) \big) \vp\big( F_2X(g_{n+N})\cdots X(g_{n+N-k+1})\big) 
\end{align*}
and
\begin{align*}
II:=&\sum_{j=1}^M \sum_{i=1}^m  \vp(X(f_{m+j})X(k)) \vp(X(f_i)X(h))  \vp\big( X(f_{m+j+1})\cdots X(f_{m+M})G_2^\ast \big)\\
&\hspace{1cm}\vp\big( X(f_1)\cdots X(f_{i-1})X(f_{m+1}) \cdots  X(f_{m+j-1})G_1^\ast \big)\vp\big(X(f_{i+1})\cdots X(f_m)\big)\\
&+\sum_{j=1}^M \sum_{j'=1}^{j-1} \vp(X(f_{m+j})X(k)) \vp(X(h)X(f_{m+j'}))  \vp\big( X(f_{m+j+1})\cdots X(f_{m+M})G_2^\ast \big)\\
&\hspace{1cm}\vp\big( F_1 X(f_{m+j'+1}) \cdots X(f_{m+j-1})G_1^\ast \big) \vp\big( X(f_{m+1})\cdots X(f_{m+j'-1})\big)\\
&+\sum_{j=1}^M \sum_{\ell=0}^{n-1} \vp(X(f_{m+j})X(k)) \vp(X(h)X(g_{n-\ell})) \vp\big( X(f_{m+j+1})\cdots X(f_{m+M})G_2^\ast \big)\\
&\hspace{1cm}\vp\big( F_1 X(g_{n-\ell-1}) \cdots X(g_1) \big)\vp\big( X(f_{m+1})\cdots X(f_{m+j-1})X(g_n)\cdots X(g_{n-\ell+1})\big) \ .
\end{align*}

\

\

\noindent

On the other hand, using again the very definition of $D^X$, we can readily expand the second quantity $\big( \vp \times \vp \big) \big(\langle\mathcal{R}_2(\bu) ; \bv^\ast \rangle_{\ch} \big)$ as
\begin{align*}
&\big( \vp \times \vp \big) \big(\langle\mathcal{R}_2(\bu) ; \bv^\ast \rangle_{\ch} \big)\\
&= \sum_{i=1}^m \sum_{i'=1}^{i-1}\vp(X(f_i)X(h))\vp(X(f_{i'})X(k)) \vp\big( X(f_{i+1})\cdots X(f_m)\big)\\
&\hspace{4cm}  \vp\big( X(f_1)\cdots X(f_{i'-1})G_1^\ast\big) \vp\big( X(f_{i'+1})\cdots X(f_{i-1}) F_2G_2^\ast\big) \\
&\hspace{0.5cm}+\sum_{i=1}^m \sum_{j=1}^{M}\vp(X(f_i)X(h)) \vp(X(f_{m+j})X(k))\vp\big( X(f_{i+1})\cdots X(f_m)\big)\\
&\hspace{1cm}\vp\big( X(f_1)\cdots X(f_{i-1})X(f_{m+1})\cdots X(f_{m+j-1})G_1^\ast\big) \vp\big( X(f_{m+j+1})\cdots X(f_{m+M})G_2^\ast\big) \\
&+ \sum_{j=1}^M\sum_{i=1}^m  \vp(X(f_{m+j})X(h))\vp(X(f_i)X(k)) \vp\big( X(f_{m+1})\cdots X(f_{m+j-1})\big)\\
&\hspace{2cm} \vp\big(X(f_1)\cdots X(f_{i-1})G_1^\ast\big) \vp\big( X(f_{i+1})\cdots X(f_m) X(f_{m+j+1})\cdots X(f_{m+M})G_2^\ast\big)\\
&+ \sum_{j=1}^M\sum_{j'=j+1}^M  \vp(X(f_{m+j})X(h))\vp(X(f_{m+j'})X(k)) \vp\big( X(f_{m+1})\cdots X(f_{m+j-1})\big)\\
&\hspace{2cm}\vp\big( F_1X(f_{m+j+1})\cdots X(f_{m+j'-1})G_1^\ast\big) \vp\big( X(f_{m+j'+1})\cdots X(f_{m+M})G_2^\ast\big)  \, .
\end{align*}
Comparing the latter expansion with (\ref{decompo-r-1}), we easily get that
$$\big( \vp\times \vp\big) \big(\langle \mathcal{R}(\bu);\bv^\ast \rangle_{\ch}\big)=\big( \vp\times \vp\big) \big(\langle \mathcal{R}_1(\bu);\bv^\ast \rangle_{\ch}\big)-\big( \vp\times \vp\big) \big(\langle \mathcal{R}_2(\bu);\bv^\ast \rangle_{\ch}\big)=\mathcal{Q}_1+\mathcal{Q}_2 \, ,$$
with
\begin{align*}
\mathcal{Q}_1&:=\sum_{i=1}^m \sum_{k=0}^{N-1} \vp\big( X(f_1)\cdots X(f_{i-1})G_1^\ast\big) \vp\big( X(f_{i+1}) \cdots X(f_m)X(g_{n+N-k-1})\cdots X(g_{n+1}) \big)\\
&\hspace{3cm} \vp\big( F_2X(g_{n+N})\cdots X(g_{n+N-k+1})\big) \, \langle f_i,k\rangle_{\ch} \langle h,g_{n+N-k}\rangle_{\ch}\\
&=\sum_{k=0}^{N-1} \big( \vp\times \vp\big) \big( \langle D^X F_1,k \rangle_{\ch} \cdot (G_1^\ast\otimes ( X(g_{n+N-k-1})\cdots X(g_{n+1}))) \big)\\
&\hspace{5cm} \vp\big( F_2X(g_{n+N})\cdots X(g_{n+N-k+1})\big) \,  \langle h,g_{n+N-k}\rangle_{\ch}\\
&=\big( \vp\times \vp\times \vp\big) \big( \langle D^X F_1 \otimes F_2,k\rangle_{\ch} \cdot \langle G_1^\ast\otimes (D^X G_2)^\ast, h\rangle_{\ch}\big) \, ,
\end{align*}
and similarly
\begin{align*}
\mathcal{Q}_2&:=\sum_{j=1}^M \sum_{\ell=0}^{n-1} \vp\big(F_1X(g_{n-\ell-1})\cdots X(g_1) \big) \vp\big(X(f_{m+1})\cdots X(f_{m+j-1})X(g_n)\cdots X(g_{n-\ell+1}) \big)\\
&\hspace{3cm} \vp\big(X(f_{m+j+1})\cdots X(f_{m+M})G_2^\ast \big) \, \langle f_{m+j},k\rangle_{\ch} \langle h,g_{n-\ell}\rangle_{\ch}\\
&=\big( \vp\times \vp\times \vp\big) \big( \langle F_1 \otimes D^X F_2,k \rangle_{\ch} \cdot \langle (D^X G_1)^\ast\otimes G_2^\ast, h\rangle_{\ch}\big) \, .
\end{align*}
This corresponds to the desired formula (\ref{desired-formula}).

\smallskip

\begin{flushright}
$\square$
\end{flushright}

\

\subsection{Proof of Corollary \ref{coro:bound-ito}}\label{subsec:proof-coro-boun}

\

\smallskip

First, observe that
\begin{equation}\label{t-tilde-t}
\mathcal{T}_\ch\big(\bu,\bu\big)=\widetilde{\mathcal{T}}_\ch\big( \nabla^X \bu \big) \,
\end{equation}
where $\widetilde{\mathcal{T}}_\ch$ is defined as (the multilinear extension of)
$$\widetilde{\mathcal{T}}_\ch \begin{pmatrix} (s,t)\mapsto \mathbb{F} \, h_s \, k_t \\
(s,t)\mapsto \mathbb{G}\, \ell_s \, m_t \end{pmatrix}:=\big[ \mathbb{F}\cdot \mathbb{G}^\ast+\mathbb{G}\cdot \mathbb{F}^\ast \big] \langle h,m\rangle_\ch \langle \ell,k\rangle_\ch \, .$$ 
Now, for all $\mathbb{U}_{s,t}:=\sum_i \mathbb{F}^i \, h^i_s\, k^i_t$ and $\mathbb{V}:=\sum_j \mathbb{G}^j\, \ell^j_s\, m^j_t$,
\begin{align*}
&\big( \vp\times \vp \times \vp\big)\left( \widetilde{\mathcal{T}}_\ch \begin{pmatrix} \mathbb{U} \\
\mathbb{V} \end{pmatrix}\right)=\sum_{i,j} \big( \vp\times \vp \times \vp\big)\big( \mathbb{F}^i\cdot \big( \mathbb{G}^j\big)^\ast+\mathbb{G}^j\cdot \big(\mathbb{F}^i\big)^\ast \big) \langle h^i,m^j\rangle_\ch \langle  \ell^j,k^i\rangle_\ch\\
&=\sum_{i,j} \big\langle \mathbb{F}^i,\mathbb{G}^j\big\rangle_{L^2(\vp\times \vp\times \vp)} \langle h^i,m^j\rangle_\ch \langle k^i,\ell^j\rangle_\ch+\sum_{i,j} \big\langle \mathbb{G}^j,\mathbb{F}^i\big\rangle_{L^2(\vp\times \vp\times \vp)} \langle m^j,h^i\rangle_\ch \langle \ell^j,k^i\rangle_\ch\\
&=\sum_{i,j} \big\langle \mathbb{F}^i \otimes h^i \otimes k^i,\mathbb{G}^j\otimes m^j \otimes \ell^j\big\rangle_{L^2(\vp\times \vp\times \vp)\otimes \ch \otimes \ch}\\
&\hspace{2cm}+\sum_{i,j} \big\langle \mathbb{G}^j\otimes m^j \otimes \ell^j, \mathbb{F}^i \otimes h^i \otimes k^i\big\rangle_{L^2(\vp\times \vp\times \vp)\otimes \ch \otimes \ch}\\
&= \big\langle \mathbb{U},\mathbb{V}\big\rangle_{L^2(\vp\times \vp\times \vp)\otimes \ch \otimes \ch} +\big\langle \mathbb{V},\mathbb{U}\big\rangle_{L^2(\vp\times \vp\times \vp)\otimes \ch \otimes \ch}\, ,
\end{align*}
and so
\begin{align*}
&\bigg|\big( \vp\times \vp \times \vp\big)\left( \widetilde{\mathcal{T}}_\ch \begin{pmatrix} \mathbb{U} \\
\mathbb{V} \end{pmatrix}\right)\bigg|\\
&\hspace{1cm}\leq 2 \big\| \mathbb{U}\big\|_{L^2(\vp\times \vp\times \vp)\otimes \ch \otimes \ch}\big\| \mathbb{V}\big\|_{L^2(\vp\times \vp\times \vp)\otimes \ch \otimes \ch}\\
&\hspace{1cm}\leq \big\| \mathbb{U}\big\|_{L^2(\vp\times \vp\times \vp)\otimes \ch \otimes \ch}^2+\big\| \mathbb{V}\big\|_{L^2(\vp\times \vp\times \vp)\otimes \ch \otimes \ch}^2=\big\| \begin{pmatrix}
\mathbb{U}\\
\mathbb{V}
\end{pmatrix}\big\|_{L^2(\vp\times \vp\times \vp;\ch\otimes \ch)^2} \, .
\end{align*}
Combining the latter estimate with formulas \eqref{ito-isometry} and \eqref{t-tilde-t} yields the desired bound \eqref{bound-ito}.
\begin{flushright}
$\square$
\end{flushright}

\

\section{Integration with respect to the non-commutative fractional Brownian motion}\label{sec:int-nc-fbm}

From now on and for the rest of the paper, we will specialize our analysis to the case of the \emph{NC fractional Brownian motion} (NC-fBm in the sequel). 
\smallskip

This model was already at the core of our considerations in \cite{deya-schott-3} (see also \cite{deya-hfbm}), and it provides us with a natural extension of the celebrated free Brownian motion. For the sake of completeness, let us briefly recall that the NC-fBm is a specific family of (centered) semicircular processes. As such, these processes are fully characterized by their covariance function (just as centered Gaussian processes in the classical setting), and we can therefore fully describe the model as follows:

\begin{definition}\label{defi:NC-fbm}
Fix a NC probability space $(\ca,\vp)$, as well as a finite time horizon $T>0$. For every $H\in (0,1)$, we call a NC fractional Brownian motion (NC-fBm) of Hurst index $H$ any centered semicircular process $\{X_t\}_{t\in [0,T]}$ in $(\ca,\vp)$ whose covariance function is given by the classical fractional formula
\begin{equation}\label{cova-NC-fBm}
\vp\big( X_{s}X_{t}\big)=R_H(s,t):=\frac12 \big\{s^{2H}+t^{2H}-|t-s|^{2H}\big\} \ , \ s,t\in [0,T] \, .
\end{equation}
\end{definition}

Following this definition, it is easy to see that the NC-fBm of Hurst index $\frac12$ is nothing but the celebrated free Brownian motion, for which NC stochastic calculus was originally developed (in \cite{biane-speicher}). Let us also recall that as soon as $H\neq \frac12$, the fundamental free independence property of the disjoint increments is lost, leaving us with major technical difficulties regarding integration with respect to the NC-fBm. In particular, we can no longer rely on the It{\^o}-type arguments used in \cite{biane-speicher}.

\smallskip

In this context, and with the considerations of the previous section in mind, our first objective will be to show that the Skorohod approach (i.e. Definition \ref{defi:skoro-int}) can still be applied in the presence of the NC-fBm, at least for any Hurst index $H>\frac14$ and for a class of simple (but non-trivial) integrands, providing us with a possible natural interpretation of the integral in this case.

\

In fact, in the rest of the paper, and for obvious technical reasons, we will restrict our attention to polynomial integration, that is we consider integrands of the form $t\mapsto P(X_t)\otimes Q(X_t)$, for two polynomials $P,Q$. Note that this restriction already prevailed in \cite{deya-schott-3}.

\subsection{Skorohod integration with respect to the NC-fBm}\label{subsec:skoro-fbm}

\

\smallskip

The main statement in this setting should not come as a surprise:

\begin{proposition}\label{prop:skoro-integr}
Assume that $\{X_t, \, t\in [0,T]\}$ is a NC-fBm of Hurst index $H\in (\frac14,1)$, in a given NC probability space $(\ca,\vp)$. Then, for all $0\leq s\leq t\leq T$ and all polynomials $P,Q$, the biprocess $u \mapsto P(X_u)\otimes Q(X_u)$ is Skorohod-integrable with respect to $X$ on $[s,t]$ (in the sense of Definition \ref{defi:skoro-int}).
\end{proposition}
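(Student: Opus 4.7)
The plan is to invoke Proposition \ref{prop:crit-sko-int}: it suffices to check that, for any sequence of subdivisions $\Delta_{st}=\{s=r_0<\cdots<r_n=t\}$ of $[s,t]$ with $|\Delta_{st}|\to 0$, the discretized biprocesses
$$\bu^{\Delta_{st}}\,:=\,\sum_{i=0}^{n-1} \big(P(X_{r_i})\otimes Q(X_{r_i})\big)\,\mathbf{1}_{[r_i,r_{i+1})}$$
form a Cauchy sequence with respect to $\|.\|_{\mathbb{D}^{1,2}(\ch)}$. Unfolding the definition (\ref{defi:d-1-2}), this reduces to establishing the Cauchy property for the two scalar quantities $\|\bu^{\Delta_{st}}\|_{L^2(\vp\times\vp;\ch)}$ and $\|\nabla^X\bu^{\Delta_{st}}\|_{L^2(\vp\times\vp\times\vp;\ch\otimes\ch)^2}$ as $|\Delta_{st}|\to 0$, together with the vanishing of the corresponding cross-differences $\|\bu^{\Delta_{st}}-\bu^{\Delta'_{st}}\|_{\mathbb{D}^{1,2}(\ch)}$.

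First I would expand both norms explicitly. Assuming (without loss of generality) that $P,Q$ have real coefficients, the first one reads
$$\|\bu^{\Delta_{st}}\|^2_{L^2(\vp\times\vp;\ch)}=\sum_{i,j}\vp\big(P(X_{r_i})P(X_{r_j})\big)\,\vp\big(Q(X_{r_i})Q(X_{r_j})\big)\,\langle\mathbf{1}_{[r_i,r_{i+1})},\mathbf{1}_{[r_j,r_{j+1})}\rangle_\ch,$$
while the derivation rule $D^X P(X_u)=\partial P(X_u)\,\mathbf{1}_{[0,u]}$ (cf.\ \eqref{deriv-p-x-h}) yields
$$D^X_1\bu^{\Delta_{st}}(r,v)=\sum_i \big(\partial P(X_{r_i})\otimes Q(X_{r_i})\big)\,\mathbf{1}_{[0,r_i]}(r)\,\mathbf{1}_{[r_i,r_{i+1})}(v),$$
and analogously for $D^X_2$. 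Expanding $\|\nabla^X\bu^{\Delta_{st}}\|^2$ accordingly produces sums of the same general structure, now weighted by the additional \emph{bounded} covariance factor $\langle\mathbf{1}_{[0,r_i]},\mathbf{1}_{[0,r_j]}\rangle_\ch=R_H(r_i,r_j)$.

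Next I would apply the NC Wick formula of Definition \ref{def:semicircular-family} (or rather its consequence in Proposition \ref{prop:wick}) to each of the polynomial expectations appearing above, so as to reduce them to polynomials in $R_H(r_i,r_i)$, $R_H(r_j,r_j)$, $R_H(r_i,r_j)$. This produces, in each sum, a coefficient which is a \emph{bounded, jointly continuous} function $\Phi(r_i,r_j)$ of the discretization points, and which is Hölder continuous (of order $H$) in each variable; indeed, $\|X_u-X_v\|_{L^2(\vp)}=|u-v|^H$ transfers to any polynomial expression in $X$. The two sums then become Riemann approximations of two-dimensional Stieltjes-type integrals of the generic form $\iint_{[s,t]^2}\Phi(u,v)\,\mu_H(du,dv)$, where $\mu_H$ is the (signed, possibly distributional) kernel formally associated with the mixed second partial derivative of $R_H$.

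The main obstacle is to establish the convergence of these Riemann approximations, and the vanishing of their cross-differences, uniformly in the full range $H\in(\frac14,1)$. For $H>\frac12$, the kernel $\mu_H(du,dv)=H(2H-1)|u-v|^{2H-2}du\,dv$ is absolutely continuous with $L^1$-density, and convergence is standard given the Hölder continuity of $\Phi$. For $H\in(\frac14,\frac12]$ the kernel is non-integrable and cancellations must be exploited, most naturally through an isometric identification of $\ch$ with a fractional Sobolev-type space, turning the sums into discretizations of (convergent) singular integrals. The threshold $H>\frac14$ surfaces precisely here: the Malliavin-derivative piece $\|\nabla^X\bu^{\Delta_{st}}\|^2$ amounts to measuring the function $\sum_i \mathbf{1}_{[0,r_i]}\otimes \mathbf{1}_{[r_i,r_{i+1})}$ (tensorised with bounded polynomial factors) in $\ch\otimes\ch$, and a careful examination of the resulting double-Sobolev-type norm shows that Cauchyness of these approximations is controlled by terms behaving like $|u-v|^{4H-1}$, which are integrable on $[s,t]^2$ exactly when $4H>1$. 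Once this analytic bound is in place, the Cauchy criterion of Proposition \ref{prop:crit-sko-int} applies and concludes the proof.
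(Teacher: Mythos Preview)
Your overall strategy---invoking Proposition \ref{prop:crit-sko-int} and establishing Cauchyness in $\mathbb{D}^{1,2}(\ch)$---matches the paper's. However, your execution takes an unnecessarily complicated detour. You propose to expand the norms via the Wick formula, reducing everything to scalar coefficients $\Phi(u,v)$ built from the covariance $R_H$, and then to analyze the resulting two-dimensional Riemann--Stieltjes sums against the (distributional) kernel $\mu_H$. For $H\leq\frac12$ you only gesture at the crucial step (``cancellations must be exploited\ldots through an isometric identification of $\ch$ with a fractional Sobolev-type space''), leaving the actual convergence estimate unproved.

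The paper bypasses the Wick formula entirely at this stage. It first establishes (Lemma \ref{lem:identif-space}) a bound of the form $\|\bu\|_{\mathbb{D}^{1,2}(\ch)}\lesssim \|\bu\|_{1;T,H,\vp}+\|D_1^X\bu\|_{2;T,H,\vp}+\|D_2^X\bu\|_{2;T,H,\vp}$, where the right-hand norms are explicit fractional integrals built from the kernel $K_H$ (via the isometry $\langle h,k\rangle_\ch=\langle K_{H,T}^\ast h,K_{H,T}^\ast k\rangle_{L^2}$ and the pointwise estimates $|K_H(v,u)|\lesssim|v-u|^{H-1/2}$, $|\partial_v K_H(v,u)|\lesssim |v-u|^{H-3/2}$). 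It then proves a purely \emph{deterministic} convergence lemma (Lemma \ref{lem:deter-1}): if $x:[0,T]\to E$ is $H$-H\"older in any Banach space $E$, its step discretizations converge in these fractional norms, with the threshold $H>\frac14$ emerging from the integrability of $|v-u|^{4H-1}$. Since $u\mapsto P(X_u)\otimes Q(X_u)$ and its tensor derivatives are $H$-H\"older in $L^\infty(\vp)$ (hence in $L^2(\vp\times\vp)$ and $L^2(\vp\times\vp\times\vp)$), the result follows immediately---no semicircular combinatorics are needed. Your route could in principle be completed, but the paper's separation into ``kernel comparison $+$ deterministic H\"older lemma'' is both shorter and more transparent about where $H>\frac14$ enters.
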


As the reader might expect it, our strategy to prove Proposition \ref{prop:skoro-integr} will rely on the use of the practical criterion exhibited in Proposition \ref{prop:crit-sko-int}. A first step here consists in the estimation of the abstract norms $\big\| . \big\|_{L^2(\vp\times \vp;\ch)}$ and $\big\|. \big\|_{L^2(\vp\times \vp\times \vp;\ch\otimes \ch)}$ (involved in the definition \eqref{defi:d-1-2} of $\big\|.\big\|_{\mathbb{D}^{1,2}(\ch)}$) in terms of more explicit quantities, depending on $H$ only. To this end, we shall in fact lean on similar bounds as in the commutative case, as detailed in Lemma \ref{lem:identif-space} below.

\

\textit{For the sake of conciseness, we will assume for the whole proof of Proposition \ref{prop:skoro-integr} (and therefore for the rest of Section \ref{subsec:skoro-fbm}) that $H\leq  \frac12$. However, it is not hard to check that the proof in the (easier) case $H\in (\frac12,1)$ can be derived from minor adaptations of Lemma \ref{lem:identif-space} below (according to the estimates in \cite[Chapter 5]{nualart-book} for the fractional kernel $K_H$).}

\smallskip

\begin{lemma}\label{lem:identif-space}
Fix $H\in (0,\frac12]$, and let $\ch$ be the space associated with the NC-fBm of Hurst index $H$ (through \eqref{defi-ch}). Then there exists a constant $c_H>0$ such that, for every elementary biprocess $\bu\in \ce([0,T];\ca^{\otimes 2})$, one has
\begin{equation}\label{identif-space}
\big\| \bu\big\|_{\mathbb{D}^{1,2}(\ch)} \leq c_H \big\{\big\| \bu \big\|_{1;T,H,\vp} +\big\| D^X_1 \bu \big\|_{2;T,H,\vp}+\big\| D^X_2 \bu \big\|_{2;T,H,\vp} \big\} \, , 
\end{equation}
where 
\begin{equation*}
\big\|\bu\big\|_{1;T,H,\vp}^2:=\int_0^T du \, |T-u|^{2H-1} \big\| \bu_u\big\|_{L^2(\vp\times \vp)}^2+\int_0^T du \bigg( \int_u^T dv \, |v-u|^{H-\frac32} \big\| \bu_v-\bu_u\big\|_{L^2(\vp \times \vp)} \bigg)^2 
\end{equation*}
and, for every $\mathbb{U}\in \ce([0,T]^2; \ca^{\otimes 3})$,
\begin{align*}
&\big\| \U \big\|_{2;T,H,\vp}^{2}:=\int_{[0,T]^2} du_1 du_2 \, |T-u_1|^{2H-1} |T-u_2|^{2H-1} \big\| \U_{u_1,u_2}\big\|_{L^2(\vp\times \vp\times \vp)}^2\\
&+ \int_{[0,T]^2} du_1 du_2 \, |T-u_1|^{2H-1}  \bigg(\int_{u_2}^T dv \, |v-u_2|^{H-\frac32} \big\| \U_{u_1,v}-\U_{u_1,u_2}\big\|_{L^2(\vp\times \vp\times \vp)}^2 \bigg)^2\\
&+ \int_{[0,T]^2} du_1 du_2 \, |T-u_2|^{2H-1}  \bigg(\int_{u_1}^T dv \, |v-u_1|^{H-\frac32} \big\| \U_{v,u_2}-\U_{u_1,u_2}\big\|_{L^2(\vp\times \vp\times \vp)}^2 \bigg)^2\\
&+ \int_{[0,T]^2} du_1 du_2\\
&\hspace{0.5cm}   \bigg(\int_{u_1}^T dv_1\int_{u_2}^T dv_2 \, |v_1-u_1|^{H-\frac32}|v_2-u_2|^{H-\frac32} \big\| \U_{v_1,v_2}-\U_{v_1,u_2}-\U_{u_1,v_2}+\U_{u_1,u_2}\big\|_{L^2(\vp\times \vp\times \vp)}^2 \bigg)^2 \, .
\end{align*}
\end{lemma}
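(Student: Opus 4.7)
The plan is to reduce the bound to the classical scalar estimate for the fractional $\ch$-norm (as recorded in \cite[Chapter 5]{nualart-book}) and then lift it to the NC-valued setting by a Hilbert space orthonormal basis / Minkowski argument. Let me first recall that for $H\in (0,\frac12]$, the covariance $R_H$ admits the square-root kernel representation $R_H(s,t)=\int_0^{s\wedge t} K_H(s,r)K_H(t,r)\,dr$, which turns $\|.\|_\ch$ into the $L^2([0,T])$-norm of $K_H^\ast\, .$\,, where the adjoint kernel satisfies the pointwise bounds $|K_H(T,u)|\lesssim |T-u|^{H-\frac12}$ and $|\partial_r K_H(r,u)|\lesssim |r-u|^{H-\frac32}$. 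Using the standard formula $K_H^\ast f(u)= K_H(T,u)f(u)+\int_u^T (f(v)-f(u))\partial_v K_H(v,u)\,dv$, these bounds yield the scalar control
\begin{equation*}
\|f\|_\ch^2 \leq c_H^2\bigg\{\int_0^T f(u)^2 |T-u|^{2H-1}\,du+\int_0^T\bigg(\int_u^T|f(v)-f(u)|\,|v-u|^{H-\frac32}\,dv\bigg)^2 du\bigg\},
\end{equation*}
which is the one-dimensional template behind $\|.\|_{1;T,H,\vp}$.

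Next, I would transfer this estimate to the $L^2(\vp\times \vp)$-valued setting. The key observation is that, by the very conventions on $\langle \cdot,\cdot\rangle_\ch$ for $\ca_X^{\otimes 2}$-valued paths, the quantity $\|\bu\|_{L^2(\vp\times\vp;\ch)}^2$ coincides with the squared norm of $\bu$ in the Hilbert tensor product $L^2(\vp\times \vp)\otimes \ch$. Fixing an orthonormal basis $\{e_\alpha\}$ of $L^2(\vp\times \vp)$ and decomposing $\bu_u=\sum_\alpha f_\alpha(u)\,e_\alpha$, Parseval reduces the norm to $\sum_\alpha\|f_\alpha\|_\ch^2$, and the scalar bound above can be applied coefficientwise. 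Parseval handles the first term directly: $\sum_\alpha f_\alpha(u)^2=\|\bu_u\|_{L^2(\vp\times\vp)}^2$. For the second term, Minkowski's integral inequality in $\ell^2$ gives
\begin{equation*}
\bigg(\sum_\alpha\bigg(\int_u^T |f_\alpha(v)-f_\alpha(u)|\,|v-u|^{H-\frac32}dv\bigg)^{\!2}\bigg)^{\!\frac12}\!\!\leq \int_u^T \|\bu_v-\bu_u\|_{L^2(\vp\times\vp)}|v-u|^{H-\frac32}dv,
\end{equation*}
which exactly produces the second piece of $\|\bu\|_{1;T,H,\vp}^2$.

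The derivative contribution is handled by the same mechanism, applied now on $\ch\otimes \ch$. I would iterate the scalar fractional bound in each of the two time variables of $D^X_i\bu\in \ce([0,T]^2;\ca_X^{\otimes 3})$, which gives a \emph{four-term} estimate corresponding to the four choices between the $K_H(T,\cdot)$-boundary piece and the $\partial K_H$-increment piece in each coordinate. The mixed boundary/increment terms involve one-variable increments $\U_{u_1,v_2}-\U_{u_1,u_2}$ or $\U_{v_1,u_2}-\U_{u_1,u_2}$, while the pure-increment term produces the rectangular increment $\U_{v_1,v_2}-\U_{v_1,u_2}-\U_{u_1,v_2}+\U_{u_1,u_2}$. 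The vector-valued upgrade is identical: expand along an orthonormal basis of $L^2(\vp\times\vp\times\vp)$, apply Parseval plus Minkowski in each of the two integral variables, and the four terms of $\|D^X_i\bu\|_{2;T,H,\vp}^2$ appear in one-to-one correspondence with the four pieces of the double fractional decomposition.

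Finally, combining these bounds on the three summands entering the definition \eqref{defi:d-1-2} of $\|\bu\|_{\mathbb{D}^{1,2}(\ch)}$, up to a constant $c_H$ that absorbs the kernel estimates for both orders, yields the announced inequality \eqref{identif-space}. The only mildly delicate step I would emphasize in the write-up is the two-dimensional decomposition of $\|F\|_{\ch\otimes\ch}^2$: this is where the rectangular second-order increment naturally emerges, and where some care is needed to keep the four Minkowski applications consistent; everything else is a linear combination of the scalar argument with Parseval on the operator side.
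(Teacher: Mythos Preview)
Your proposal is correct and follows essentially the same route as the paper: both rely on the kernel isometry $\langle h,k\rangle_\ch=\langle K_{H,T}^\ast h,K_{H,T}^\ast k\rangle_{L^2([0,T])}$ together with the pointwise bounds $|K_H(T,u)|\lesssim |T-u|^{H-\frac12}$ and $|\partial_v K_H(v,u)|\lesssim |v-u|^{H-\frac32}$, and then iterate the argument in each variable for the $\ch\otimes\ch$ part. The only cosmetic difference is that the paper expands $\|\bu\|_{L^2(\vp\times\vp;\ch)}^2$ directly as a sum of four $(\vp\times\vp)$-terms and bounds the cross terms via Cauchy--Schwarz for the trace $\vp\times\vp$, whereas you first reduce to scalar coefficients via an orthonormal basis of $L^2(\vp\times\vp)$ and then invoke Minkowski in $\ell^2$; both devices yield the same bound.
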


\begin{proof}
Just as in the classical commutative case, the bound relies on the consideration of the kernel $K_H$ defined in \cite[Proposition 5.1.3]{nualart-book}, and which satisfies, for all elementary functions $h,k\in \ce([0,T];\R)$, 
\begin{equation}\label{iso-k-h}
\langle h,k\rangle_{\ch}=\langle K_{H,T}^\ast h,K_{H,T}^\ast k\rangle_{L^2([0,T])}
\end{equation}
with
$$(K_{H,T}^\ast h)(u):=K_H(T,u) h(u)+\int_u^T dv \, \frac{\partial K_{H}}{\partial v} (v,u) \{h(v)-h(u)\} \, ,$$
as well as
\begin{equation}\label{estim-k-h}
|K_H(v,u)| \leq c_H \, |v-u|^{H-\frac12} \quad , \quad \Big| \frac{\partial K_H}{\partial v}(v,u)\Big| \leq c_H \, |v-u|^{H-\frac32} \, ,
\end{equation}
for all $0\leq u<v\leq T$ and for some constant $c_H>0$. 

\smallskip

In our setting, and given $\bu \in \ce([0,T];\ca^{\otimes 2})$, we can use the isometry \eqref{iso-k-h} to write $\big\| \bu \big\|_{L^2(\vp\times \vp;\ch)}^2$ as
\begin{align*}
&\big\| \bu \big\|_{L^2(\vp\times \vp;\ch)}^2 =\int_0^T du \, K_H(T,u)^2 \big( \vp\times \vp\big) \big( \bu_u \cdot \bu^\ast_u \big)\\
&\hspace{0.5cm}+\int_0^T du  \, K_H(T,u) \int_u^T dv\frac{\partial K_H}{\partial v}(v,u) \, \big( \vp\times \vp\big) \Big( \bu_u \cdot \big( \bu_v-\bu_u\big)^\ast  \Big)\\
&\hspace{0.5cm}+\int_0^T du  \, K_H(T,u) \int_u^T dv\frac{\partial K_H}{\partial v}(v,u) \, \big( \vp\times \vp\big) \Big( \big( \bu_v-\bu_u\big) \cdot \bu_u^\ast  \Big)\\
&\hspace{0.5cm}+\int_0^T du \int_u^T dv \, \frac{\partial K_H}{\partial v}(v,u) \int_u^T d\tilde{v} \, \frac{\partial K_H}{\partial v}(\tilde{v},u) \, \big( \vp\times \vp\big) \Big( \big( \bu_v-\bu_u\big) \cdot \big( \bu_{\tilde{v}}-\bu_u \big)^\ast \Big) \ .
\end{align*}

The desired bound $\big\| \bu \big\|_{L^2(\vp\times \vp;\ch)} \leq c_H \big\| \bu \big\|_{1;H,T,\vp}$ is then a straightforward consequence of the Cauchy-Schwarz inequality (for $\vp\times \vp$), combined with the two estimates in \eqref{estim-k-h}.

\smallskip

Similar arguments can then be used in order to show that
$$\big\| \U \big\|_{L^2(\vp\times \vp \times \vp;\ch \otimes \ch)} \leq c_H \big\| \U \big\|_{2;H,T,\vp}\, , $$
for every $\mathbb{U}\in \ce([0,T]^2; \ca^{\otimes 3})$.
\end{proof}

\smallskip

As a second step toward Proposition \ref{prop:skoro-integr}, and with the result of Lemma \ref{lem:identif-space} in mind, consider a general Banach space $(E,\|.\|_E)$ and for any path $x:[0,T] \to E$, let us define the quantity $\|x\|_{1;H,T,E}$ by replacing $\bu$ with $x$ and $\|.\|_{L^2(\vp\times \vp)}$ with $\|.\|_E$ in the definition of $\|.\|_{1;H,T,\vp}$. In the same vein, and for any path $\mathbf{x}:[0,T]^2 \to E$, let us define the quantity $\|\mathbf{x}\|_{2;H,T,E}$ by replacing $\U$ with $\mathbf{x}$ and $\|.\|_{L^2(\vp\times \vp \times \vp)}$ with $\|.\|_E$ in the definition of $\|.\|_{2;H,T,\vp}$.

\smallskip

Our second technical result now reads as follows:

\begin{lemma}\label{lem:deter-1}
$(i)$ If $H\in (\frac14,\frac12]$ and if $x:[0,T] \to E$ is a $H$-H{\"o}lder path, then for all $0\leq s\leq t\leq T$, it holds that 
$$\|x\, \1_{[s,t]}-x^{\Delta_{st}}\|_{1;H,T,E} \stackrel{|\Delta_{st}| \to 0}{\longrightarrow} 0$$
for any subdivision $\Delta_{st}=\{s=r_0<r_1<\cdots <r_n=t\}$ of $[s,t]$ whose mesh $|\Delta_{st}|$ tends to $0$, and where we have set  $x^{\Delta_{st}}_{u}:=\sum_{i=0}^{n-1} x_{r_i} \,   \1_{[r_i,r_{i+1})}(u)$.

\smallskip

\noindent
$(ii)$ If $H\in (\frac14,\frac12]$ and if $x:[0,T] \to E$ is a $H$-H{\"o}lder path, then for all $0\leq s\leq t\leq T$, one has 
$$\|\mathbf{x}^{s,t}-\mathbf{x}^{\Delta_{st}}\|_{2;H,T,E} \stackrel{|\Delta_{st}| \to 0}{\longrightarrow} 0$$
for any subdivision $\Delta_{st}=\{s=r_0<r_1<\cdots <r_n=t\}$ of $[s,t]$ whose mesh $|\Delta_{st}|$ tends to $0$, and where we have set $\mathbf{x}^{s,t}_{v,u}:=x_v \,  \1_{[s,t]}(v)\1_{[0,v)}(u)$ and $\mathbf{x}^{\Delta_{st}}_{v,u}:=\sum_{i=0}^{n-1} x_{r_i} \,   \1_{[r_i,r_{i+1})}(v)\1_{[0,r_i)}(u)$.
\end{lemma}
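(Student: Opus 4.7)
\emph{Proof plan.} Both items follow the same template, so I describe (i) in detail and indicate how (ii) is obtained.

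\textbf{Pointwise bounds and first term.} Setting $z_u := x_u \1_{[s,t]}(u) - x^{\Delta_{st}}(u)$, one has $z_u = 0$ outside $[s,t]$ and $z_u = x_u - x_{r_i}$ on each subinterval $[r_i, r_{i+1}) \subset [s,t]$. The H\"older hypothesis on $x$ yields three basic estimates: (a) $\|z_u\|_E \leq C(u-r_i)^H \leq C|\Delta_{st}|^H$; (b) $\|z_v - z_u\|_E = \|x_v - x_u\|_E \leq C|v-u|^H$ when $u,v$ lie in the same subinterval; (c) $\|z_v - z_u\|_E \leq \|z_v\|_E + \|z_u\|_E \leq C(u-r_{i(u)})^H + C(v-r_{j(v)})^H$ when they lie in different subintervals. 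Bound (a) together with integrability of $(T-u)^{2H-1}$ immediately gives $\int_0^T|T-u|^{2H-1}\|z_u\|_E^2\,du = O(|\Delta_{st}|^{2H}) \to 0$, which disposes of the first term of $\|z\|_{1;H,T,E}^2$.

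\textbf{Second term.} Split $F(u) := \int_u^T (v-u)^{H-3/2}\|z_v - z_u\|_E\,dv$ as $F_{\text{same}}(u) + F_{\text{diff}}(u)$ according to whether $v$ lies in the subinterval containing $u$. In $F_{\text{same}}$, estimate (b) produces the kernel $(v-u)^{2H-3/2}$, whose integrability requires precisely $H > 1/4$; the resulting bound $F_{\text{same}}(u) \leq \mathrm{const}\cdot (r_{i(u)+1}-u)^{2H-1/2}$, squared and integrated over $u$, yields $\mathrm{const}\cdot \sum_i (r_{i+1}-r_i)^{4H} \leq \mathrm{const}\cdot(t-s)|\Delta_{st}|^{4H-1} \to 0$. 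In $F_{\text{diff}}$, the factor $(u - r_{i(u)})^H$ from (c) pulls out of the $v$-integral, and after a Beta-function computation its squared contribution is again $\sum_i (r_{i+1}-r_i)^{4H}$. The remaining piece $C\int_{r_{i(u)+1}}^T(v-u)^{H-3/2}(v-r_{j(v)})^H\,dv$ is handled via the key inequality $(v-r_{j(v)})^H \leq \min\{(v-u)^H,|\Delta_{st}|^H\}$, valid because $r_{j(v)} \geq r_{i(u)+1} > u$: splitting the $v$-integration at $v - u = |\Delta_{st}|$ and using each bound in its favourable range produces, after squaring and $u$-integration, another $O(|\Delta_{st}|^{4H-1})$ term.

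\textbf{Part (ii).} The error $\mathbf{x}^{s,t}_{v_1,v_2} - \mathbf{x}^{\Delta_{st}}_{v_1,v_2}$ takes the explicit form $(x_{v_1} - x_{r_i})\1_{\{v_2 < r_i\}} + x_{v_1}\1_{\{r_i \leq v_2 < v_1\}}$ when $v_1 \in [r_i, r_{i+1}) \subset [s,t]$, so the analogues of (a)--(c) hold in each coordinate direction. Each of the four contributions to $\|\cdot\|_{2;H,T,E}^2$ is split along the same/different-subinterval dichotomy in $v_1$ and $v_2$; the rectangular-increment term requires a simultaneous double splitting, but every piece is controlled by a product version of the argument of the previous paragraph.

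\textbf{Main obstacle.} The delicate point is the different-subinterval contribution to the second term. A naive uniform bound $\|z_v - z_u\|_E \leq 2C|\Delta_{st}|^H$ produces sums $|\Delta_{st}|^{2H}\sum_i(r_{i+1}-r_i)^{2H}$ which, for $H \leq 1/2$, are not controlled by the mesh alone---subdivisions with many subintervals much smaller than $|\Delta_{st}|$ can make $\sum_i(r_{i+1}-r_i)^{2H}$ arbitrarily large. The sharper bound (c), which exploits that $z$ vanishes at the \emph{left} endpoint of every subinterval, turns these into sums $\sum_i(r_{i+1}-r_i)^{4H}$ dominated by $|\Delta_{st}|^{4H-1}(t-s)$, which is precisely where the threshold $H > 1/4$ enters.
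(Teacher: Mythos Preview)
Your proof is correct and follows essentially the same strategy as the paper: split the inner integral according to whether $v$ lies in the same subinterval as $u$ or not, use the H\"older bound $\|z_v-z_u\|_E\le C|v-u|^H$ in the first case, and exploit that $z$ vanishes at each left endpoint in the second case. The only genuine difference is in the technical handling of the ``different-subinterval'' piece: the paper chooses $\varepsilon\in(0,2H-\tfrac12)$ and uses the interpolation bound $\|z_v-z_u\|_E\lesssim |\Delta|^{\varepsilon}\bigl(|v-u|^{H-\varepsilon}+|u-r_i|^{H-\varepsilon}\bigr)$, which keeps the $v$-integral convergent at infinity uniformly in $u$; you instead use $(v-r_{j(v)})^H\le \min\{(v-u)^H,|\Delta|^H\}$ and split the $v$-range at $|v-u|=|\Delta|$. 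Both tricks are standard and both produce the rate $O(|\Delta|^{4H-1})$; your argument perhaps makes the role of the threshold $H>\tfrac14$ slightly more transparent, while the paper's $\varepsilon$-interpolation is a one-line manoeuvre that avoids the case distinction. For part~(ii) both you and the paper treat the two-parameter version as a routine iteration of the same ideas.
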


\begin{proof}
It is only a matter of standard fractional estimates (see e.g. \cite[Section 8]{alos-mazet-nualart}). For instance, if $[s,t]=[0,T]$, and picking $\varepsilon \in (0,2H-\frac12)$, we can use the $H$-H{\"o}lder regularity of $x$ to write, for any subdivision $\Delta=\{0=r_0<r_1<\cdots <r_n=T\}$ of $[0,T]$,
\begin{align*}
&\int_0^T du \bigg( \int_u^T dv \, |v-u|^{H-\frac32} \big\| x_v-x^{\Delta}_v-x_u+x^{\Delta}_u\|_{E} \bigg)^2 \\
&\leq c_{x,H,T} \bigg[\sum_{i=0}^{n-1} \int_{r_i}^{r_{i+1}} du \bigg( \int_u^{r_{i+1}} dv \, |v-u|^{2H-\frac32}\bigg)^2\\
&\hspace{2cm} +|\Delta|^{2\varepsilon} \sum_{i=0}^{n-1} \int_{r_i}^{r_{i+1}} du \bigg( \int_{r_{i+1}}^T dv \, |v-u|^{H-\frac32} \big\{ |v-u|^{H-\varepsilon}+|u-r_i|^{H-\varepsilon}\big\}\bigg)^2\bigg]\\
&\leq c_{x,H,T}\bigg[\sum_{i=0}^{n-1} \int_{r_i}^{r_{i+1}} du \, |r_{i+1}-u|^{4H-1}\\
&\hspace{1.5cm}+|\Delta|^{2\varepsilon} \sum_{i=0}^{n-1} \int_{r_i}^{r_{i+1}} du \, |T-u|^{4H-1-2\varepsilon}+|\Delta|^{2\varepsilon} \sum_{i=0}^{n-1} \int_{r_i}^{r_{i+1}} du\, |u-r_i|^{2H-2\varepsilon} |r_{i+1}-u|^{2H-1}\bigg]\\
&\leq c_{x,H,T}\bigg[|\Delta|^{4H-1}+|\Delta|^{2\varepsilon}+|\Delta|^{2\varepsilon} \sum_{i=0}^{n-1} |r_{i+1}-r_i|^{4H-2\varepsilon}\bigg] \ \leq \ c_{x,H,T} \Big[|\Delta|^{4H-1}+|\Delta|^{2\varepsilon}\Big] \, ,
\end{align*}
where the constant $c_{x,H,T}>0$ may of course change from one line to another.
\end{proof}

\

\begin{proof}[Proof of Proposition \ref{prop:skoro-integr}]

\

\smallskip

Combining inequalities (\ref{bound-ito}) and (\ref{identif-space}), it suffices to check that the paths $\bu:=P(X)\otimes Q(X)$, $D^X_1 \bu$ and $D^X_2 \bu$ meet the requirements of Lemma \ref{lem:deter-1}. 

\smallskip

The result for $\bu$ is an immediate consequence of the regularity of $X$, which is known to be $H$-H{\"o}lder with respect to the $L^\infty(\vp)$-norm (see e.g. \cite[Lemma 2.1]{deya-schott-3}). 

\smallskip

As for $D^X_1 \bu$ and $D^X_2 \bu$, observe for instance that
$$(D^X_1 \bu)_{v,u}=\mathbb{Y}_v \, \1_{[0,v)}(u) \quad \text{with} \quad \mathbb{Y}_v:=\partial P(X_v)\otimes Q(X_v) \, ,$$
and, with the notation of Lemma \ref{lem:deter-1} (point $(ii)$), $(D^X_1 \bu^{\Delta_{st}})_{v,u}=(D^X_1 \bu)^{\Delta_{st}}_{v,u}$. The convergence property
$$\big\| D^X_1 \bu^{\Delta_{st}} -(D^X_1\bu)^{s,t} \big\|_{2;T,H,\vp} \stackrel{|\Delta_{st}|\to 0}{\longrightarrow} 0$$
is thus again a consequence of the $H$-H{\"o}lder roughness of $X$ (for the $L^\infty(\vp)$-norm).
\end{proof}

Let us conclude Section \ref{subsec:skoro-fbm} with the following identification result: in brief, the Skorohod integral defined via Proposition \ref{prop:skoro-integr} merges with the It{\^o} integral (as defined in \cite[Corollary 3.1.2]{biane-speicher}) when $H=\frac12$, that is when working with the free Brownian motion. This is of course the (partial) NC counterpart of the classical Skorohod/It{\^o} identification result (see e.g. \cite[Proposition 1.3.11]{nualart-book}). The property can be stated as follows:

\begin{proposition}\label{prop:identif-ito}
When $H=\frac12$, one has, for all $0\leq s\leq t\leq T$,
$$\delta^X_{s,t}\big( P(X) \otimes Q(X)\big)=\int_s^t P(X_u) dX_u Q(X_u) \, ,$$
where the integral in the right-hand side is the (NC) It{\^o} integral defined in \cite[Corollary 3.1.2]{biane-speicher}.
\end{proposition}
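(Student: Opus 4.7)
The plan is to evaluate the Skorohod Riemann approximant $\delta^X(\bu^{\Delta_{st}})$ of Definition \ref{defi:skoro-int} explicitly, and to show that when $H=\tfrac12$ it coincides, term by term and \emph{without any correction}, with the Riemann sum underlying the Biane--Speicher It\^o construction of \cite[Corollary 3.1.2]{biane-speicher}. Proposition \ref{prop:skoro-integr} guarantees that these approximants converge in $L^2(\vp)$ to $\delta^X_{s,t}(P(X) \otimes Q(X))$, while the same Riemann sums converge in $L^2(\vp)$ to $\int_s^t P(X_u)\, dX_u\, Q(X_u)$ by the definition of the free It\^o integral; uniqueness of limits will then give the identification.

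Fix a subdivision $\Delta_{st} = \{s = r_0 < \cdots < r_n = t\}$, so that $\bu^{\Delta_{st}} = \sum_i [P(X_{r_i}) \otimes Q(X_{r_i})]\mathbf{1}_{[r_i,r_{i+1})}$. By linearity and Definition \ref{defi:div-op},
\[
\delta^X(\bu^{\Delta_{st}}) = \sum_{i=0}^{n-1} P(X_{r_i})(X_{r_{i+1}}-X_{r_i})Q(X_{r_i}) \, - \, \sum_{i=0}^{n-1}(\id\times\vp\times\id)\!\left(\langle D^X[P(X_{r_i}) \otimes Q(X_{r_i})],\mathbf{1}_{[r_i,r_{i+1})}\rangle_\ch\right).
\]
The first sum is exactly the free It\^o Riemann approximant; the whole proof thus hinges on showing that each correction in the second sum vanishes when $H=\tfrac12$.

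Using the derivation rules \eqref{deriv-p-x-h} and the fact that $X_{r_i} = X(\mathbf{1}_{[0,r_i]})$, one obtains $D^X P(X_{r_i}) = \partial P(X_{r_i}) \, \mathbf{1}_{[0,r_i]}$ and similarly for $Q$, so each correction term factors as an $\id\times\vp\times\id$ expression multiplied by the scalar
\[
\langle \mathbf{1}_{[0,r_i]},\mathbf{1}_{[r_i,r_{i+1})}\rangle_\ch = R_H(r_i,r_{i+1})-R_H(r_i,r_i).
\]
When $H=\tfrac12$ the covariance reduces to $R_{1/2}(s,t)=\min(s,t)$, so this scalar is identically zero. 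This is precisely the $L^2(\vp)$-level manifestation of the orthogonality of disjoint free-Brownian increments, and it causes the correction to vanish \emph{term by term} rather than only in the limit.

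Consequently, $\delta^X(\bu^{\Delta_{st}})$ equals the free It\^o Riemann sum exactly for every $\Delta_{st}$, and passing to the $L^2(\vp)$-limit in the mesh closes the argument. I do not expect a serious technical obstacle: the substance of the statement is the observation that the Skorohod/It\^o discrepancy is controlled by the single-interval increment $R_H(r_i,r_{i+1})-R_H(r_i,r_i)$, which is nonzero for a generic $H$ but collapses at $H=\tfrac12$. The only ancillary point to check is that the biprocess $u\mapsto P(X_u)\otimes Q(X_u)$ is of the adapted type admitted by the Biane--Speicher It\^o construction, which is immediate from the $L^\infty(\vp)$-boundedness of polynomials of $X$ and the adaptedness of the map $u\mapsto X_u$.
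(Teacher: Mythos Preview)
Your proposal is correct and follows essentially the same route as the paper: compute $\delta^X(\bu^{\Delta_{st}})$ from Definition~\ref{defi:div-op}, observe that the correction term carries the scalar $\langle \1_{[0,r_i)},\1_{[r_i,r_{i+1})}\rangle_\ch$, and note that this vanishes when $H=\tfrac12$ (the paper phrases this via the identification $\ch=L^2([0,T])$ and disjoint supports, you via $R_{1/2}=\min$, which is the same fact). The only place the paper is slightly more explicit is the final ancillary point: rather than invoking mere $L^\infty(\vp)$-boundedness, it checks the Biane--Speicher hypothesis $\int_s^t\|\bu_r-\bu_r^{\Delta_{st}}\|_{L^2(\vp\times\vp)}^2\,dr\to 0$ using the $H$-H\"older regularity of $X$.
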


\begin{proof}
When $H=\frac12$, it is a well-known fact that the space $\ch$ under consideration reduces to $\ch=L^2([0,T])$, and $\langle .,.\rangle_\ch=\langle .,.\rangle_{L^2([0,T])}$.

\smallskip

Now let us set $\bu_r:=P(X_r)\otimes Q(X_r)$ for every $r\in [0,T]$, and, for a given subdivision $\Delta_{st}:=\{s=r_0<\ldots <r_n=t\}$ with mesh $|\Delta_{st}|$ tending to $0$, let us define $\bu^{\Delta_{st}}$ along \eqref{bu-n}. 

\smallskip

We can first apply the very definition \eqref{defi-diver} of $\delta^X$ to write
\begin{align*}
\delta^X\big(\bu^{\Delta_{st}} \big)&=\sum_{i=0}^{n-1} \delta^X\big(\bu_{r_i} \, \1_{[r_i,r_{i+1})} \big)\\
&=\sum_{i=0}^{n-1} \Big\{P(X_{r_i}) \{X_{r_{i+1}}-X_{r_i}\} Q(X_{r_i})-(\id \times \vp \times \id)(\langle D^X\bu_{r_i}, \1_{[r_i,r_{i+1})} \rangle_{L^2([0,T])} \Big\} \, .
\end{align*}
Observe that in this situation, we have the explicit expression
$$D^X\bu_{r_i}=\big\{ \partial P(X_{r_i}) \otimes Q(X_{r_i})+P(X_{r_i})\otimes \partial Q(X_{r_i}) \big\} \1_{[0,r_{i})} \, ,$$
and therefore we simply end up with
$$\delta^X\big(\bu^{\Delta_{st}} \big)=\sum_{i=0}^{n-1} P(X_{r_i}) \{X_{r_{i+1}}-X_{r_i}\} Q(X_{r_i})=\int_s^t \bu^{\Delta_{st}}_r \sharp dX_r \, .$$
According to \cite[Corollary 3.1.2]{biane-speicher}, it remains us to check that
$$\int_s^t \big\| \bu_r-\bu^{\Delta_{st}}_r\big\|_{L^2(\vp \times \vp)}^2 \, dr \ \stackrel{|\Delta_{st}|\to 0}{\longrightarrow} 0 \, ,$$
but this property is of course a straightforward consequence of the $H$-H{\"o}lder regularity of $X$ (for the $L^\infty(\vp)$-norm), which achieves the proof of our assertion.

\end{proof}

\subsection{Pathwise integration with respect to the NC-fBm}\label{subsec:pathwise-integr}

\

\smallskip

As we announced it in the introduction, our objective in the next section will be to compare the previous Skorohod approach with the pathwise constructions of \cite{deya-schott-3}.

\smallskip

For the reader's convenience, we propose to briefly recall how pathwise integrals with respect to the NC-fBm can be defined. Therefore, let $\{X_t, \, t\in [0,T]\}$ be NC-fBm of Hurst index $H>\frac13$, in a given NC probability space $(\ca,\vp)$. For the sake of conciseness, we will henceforth denote the increments of $X$ as $X_{s,t}:=X_t-X_s$.

\smallskip

Just as in the classical commutative case, we need to separate the two cases $H>\frac12$ and $H\leq \frac12$ in the presentation of the results.

\begin{proposition}[Proposition 2.5 in \cite{deya-schott-3}]\label{prop:int-young}
Assume that $H>\frac12$. Then, for all polynomials $P,Q$, all times $0\leq s\leq t \leq T$ and every subdivision $\Delta_{st} = \{s=r_0<r_1 <\ldots<r_\ell=t\}$ of $[s,t]$ with mesh $|\Delta_{st}|$ tending to $0$, the Riemann sum
\begin{equation}\label{riemann}
\sum_{i=0}^{\ell-1}  P(X_{r_i}) X_{r_i,r_{i+1}}Q(X_{r_i})
\end{equation}
converges in $\ca$ (i.e. for the $L^\infty(\vp)$-norm) as $|\Delta_{st}| \to 0$. We call the limit the \emph{Young integral} of $P(X)\otimes Q(X)$ with respect to $X$ on $[s,t]$, and we denote it by $\int_s^t P(X_u) d X_u Q(X_u)$ or $\int_s^t (P(X_u)\otimes Q(X_u)) \sharp dX_u$ (following the notation in \eqref{sharp-notation}). Besides, one has
\begin{equation}\label{ito-young}
 P(X_t)-P(X_s)=\int_s^t \partial P(X_u) \sharp dX_u \ .
\end{equation}
\end{proposition}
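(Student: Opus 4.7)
The plan is to proceed by a non-commutative version of Young's (or the sewing) lemma, exploiting the fact that the condition $H>\frac12$ gives us $2H>1$, the standard regime in which Riemann sums converge in a pathwise fashion. All estimates should be carried out in the $L^\infty(\varphi)$-norm, since (as noted just after Definition \ref{defi:NC-fbm}) the NC-fBm is known to be $H$-Hölder with respect to that norm, meaning $\|X_{s,t}\|_{L^\infty(\varphi)}\leq c_{H,T}|t-s|^{H}$. From this and the submultiplicativity of $\|.\|_{L^\infty(\varphi)}$, any polynomial $P$ composed with $X$ inherits the same Hölder property: $\|P(X_t)-P(X_s)\|_{L^\infty(\varphi)}\leq c_{P,H,T}|t-s|^{H}$, uniformly on $[0,T]$.

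The core step is to set, for $0\leq s\leq t\leq T$,
\[
\Xi_{s,t}:=P(X_s)\,X_{s,t}\,Q(X_s),
\]
and to control the defect $\delta\Xi_{s,u,t}:=\Xi_{s,t}-\Xi_{s,u}-\Xi_{u,t}$ for $s\leq u\leq t$. Using the additivity $X_{s,t}=X_{s,u}+X_{u,t}$, a direct computation gives
\[
\delta\Xi_{s,u,t}=\bigl[P(X_s)-P(X_u)\bigr]\,X_{u,t}\,Q(X_s)+P(X_u)\,X_{u,t}\,\bigl[Q(X_s)-Q(X_u)\bigr],
\]
and the Hölder bounds above yield $\|\delta\Xi_{s,u,t}\|_{L^\infty(\varphi)}\leq c\,|u-s|^{H}|t-u|^{H}\leq c\,|t-s|^{2H}$. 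Since $2H>1$, the standard sewing/Young argument applies: for any subdivision $\Delta_{st}$ with mesh $|\Delta_{st}|\to 0$, the Riemann sum $J(\Delta_{st}):=\sum_i \Xi_{r_i,r_{i+1}}$ is Cauchy in $L^\infty(\varphi)$. Indeed, refining a partition by inserting one point at a time changes $J$ by a telescoping sum of terms of the form $\delta\Xi$, whose total contribution is bounded by $c\,|\Delta_{st}|^{2H-1}(t-s)$; passing from any two partitions to their common refinement then shows that the sequence is Cauchy and the limit is independent of the chosen subdivision. This limit is by definition the Young integral $\int_s^t P(X_u)\,dX_u\,Q(X_u)$.

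For the Itô-type identity \eqref{ito-young}, I would start from the telescoping sum $P(X_t)-P(X_s)=\sum_{i}\bigl\{P(X_{r_{i+1}})-P(X_{r_i})\bigr\}$ along an arbitrary partition $\Delta_{st}$ of mesh $|\Delta_{st}|\to 0$. Writing $P(x)=\sum_{k}a_k x^k$ and expanding $(X_{r_i}+X_{r_i,r_{i+1}})^k$ gives
\[
P(X_{r_{i+1}})-P(X_{r_i})=\partial P(X_{r_i})\sharp X_{r_i,r_{i+1}}+R_i,
\]
where $R_i$ collects all monomial terms involving the increment $X_{r_i,r_{i+1}}$ at least twice; by submultiplicativity and the Hölder bound, $\|R_i\|_{L^\infty(\varphi)}\leq c_{P,H,T}\,|r_{i+1}-r_i|^{2H}$. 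Summing, $\sum_i\|R_i\|_{L^\infty(\varphi)}\leq c_{P,H,T}\,|\Delta_{st}|^{2H-1}(t-s)\to 0$ since $2H>1$. The remaining sum $\sum_i\partial P(X_{r_i})\sharp X_{r_i,r_{i+1}}$ is precisely a Riemann sum of the type \eqref{riemann} (applied to each monomial term in the expansion \eqref{partial-op} of $\partial P$), and thus converges in $L^\infty(\varphi)$ to $\int_s^t \partial P(X_u)\sharp dX_u$ by the first part.

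The main obstacle I expect is purely notational rather than conceptual: keeping track of the non-commutativity in the expansion of $\partial P(X)\sharp Y$ and in the bound on the remainder $R_i$. Everything reduces to the fact that $\|.\|_{L^\infty(\varphi)}$ is a $C^\ast$-norm, so submultiplicativity handles all the algebraic terms uniformly — no semicircular/Wick identity is needed here, the argument being genuinely pathwise, in contrast with the $H<\frac12$ case where second-order corrections (the Lévy-area term $\mathbb{X}^2$) become indispensable.
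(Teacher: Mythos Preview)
Your argument is correct and follows the standard Young/sewing-lemma route: control the defect $\delta\Xi_{s,u,t}$ by $|t-s|^{2H}$ via the $H$-H\"older regularity of $X$ in the $L^\infty(\vp)$-norm and submultiplicativity, then invoke $2H>1$ to get convergence of the Riemann sums; the change-of-variable formula follows from the second-order Taylor remainder estimate. Note, however, that the present paper does not actually prove this proposition: it is merely recalled here as \cite[Proposition~2.5]{deya-schott-3}, so there is no ``paper's own proof'' to compare with. Your approach is exactly the one underlying that reference (and, more generally, \cite{deya-schott}), where the non-commutative Young integral is built by the same sewing mechanism in the Banach algebra $(\ca,\|\cdot\|_{L^\infty(\vp)})$.
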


\smallskip

In order to go one step further and handle the so-called \emph{rough case}, that is the situation where $H\in (\frac13,\frac12]$, let us introduce, along the ideas of \cite{deya-schott-3}, the approximation $(X^{(n)})_{n\geq 0}$ of $X$ given by 
\begin{equation}\label{def-x-n-appro}
X^{(0)}_t=tX_1 \quad , \quad X^{(n)}_t:=X_{t^n_i}+2^n (t-t^n_i) X_{t_i^n,t^n_{i+1}} \quad \text{for} \ n\geq 1 \ \text{and} \ t\in [t^n_i,t^n_{i+1}]\ ,
\end{equation}
where $(t_i^n)$ stands for the dyadic partition of $[0,T]$, that is $t_i^n:=\frac{i T}{2^n}$, $i=0,\ldots,2^n$. Then define the sequence of approximated \emph{product L{\'e}vy areas} by the formula: for all $n\geq 0$, 
\begin{equation}\label{levy-area-appr}
\mathbb{X}^{2,n}_{s,t}[U]:=\int_s^t X^{(n)}_{s,u} U dX^{(n)}_u \ ,\quad 0\leq s\leq t\leq T\ , \ U\in \ca \, ,
\end{equation}
where the integral is here interpreted as a classical Lebesgue integral. In other words, if $s\in [t^n_k,t^n_{k+1}]$ and $t\in [t^n_\ell,t^n_{\ell+1}]$,
\begin{align*}
&\mathbb{X}^{2,n}_{s,t}[U]:=\\
&2^n\bigg(\int_s^{t_{k+1}^n} X^{(n)}_{s,u} \, du\bigg) UX_{t_k^n,t^n_{k+1}}  +\sum_{i=k+1}^{\ell-1} 2^n\bigg(\int_{t_i^n}^{t_{i+1}^n} X^{(n)}_{s,u}\, du\bigg) U X_{t_i^n,t^n_{i+1}} +2^n\bigg(\int_{t_\ell^n}^t X^{(n)}_{s,u}\, du\bigg)  U X_{t_\ell^n,t^n_{\ell+1}}\, .
\end{align*}

\begin{proposition}[Proposition 2.8 and 2.9 in \cite{deya-schott-3}]\label{prop:rough-int}
Assume that $H\in (\frac13,\frac12]$. Then:

\smallskip

\noindent
$(i)$ For all $0\leq s\leq t\leq T$ and $U\in \ca_s$, the sequence $\mathbb{X}_{st}^{2,n}[U]$ converges in $L^\infty(\vp)$ as $n\to \infty$. We denote its limit by $\mathbb{X}_{st}^{2}[U]$, and then set, for all $U,V \in \ca_s$, 
$$(U\otimes V)\sharp \mathbb{X}^2_{st}:=U\mathbb{X}^2_{st}[V]\quad , \quad \mathbb{X}^{2,\ast}_{st}\sharp (U\otimes V):=\mathbb{X}^2_{st}[U^\ast]^\ast V\, .$$


\noindent
$(ii)$ For all polynomials $P,Q$, all $0\leq s\leq t \leq T$ and every subdivision $\Delta_{st} = \{s=r_0<r_1 <\ldots<r_\ell=t\}$ of $[s,t]$ with mesh $|\Delta_{st}|$ tending to $0$, the corrected Riemann sum
\begin{equation}\label{correc-riem-sum}
\sum_{i=0}^{\ell-1} \Big\{ P(X_{r_i})X_{r_i,r_{i+1}}Q(X_{r_i})+(\partial P (X_{r_i}) \sharp \mathbb{X}^2_{r_i,r_{i+1}}) Q(X_{r_i})+P(X_{r_i})(\mathbb{X}^{2,\ast}_{r_i,r_{i+1}} \sharp \partial Q(X_{r_i}))\Big\}
\end{equation}
converges in $\ca$ (i.e. for the $L^\infty(\vp)$-norm) as $|\Delta_{st}| \to 0$. We call its limit the \emph{rough integral} of $P(X)\otimes Q(X)$ with respect to $(X,\mathbb{X}^2)$ (on $[s,t]$), and denote it by $\int_s^t P(X_u) (\circ \, d \mathbb{X}_u) Q(X_u)$ or $\int_s^t (P(X_u)\otimes Q(X_u)) \sharp (\circ \, d \mathbb{X}_u)$.

\smallskip

\noindent
$(iii)$ For every polynomial $P$ and all $0\leq s\leq t \leq T$, it holds that
\begin{equation}\label{ito-rough}
P(X_t)-P(X_s)=\int_s^t \partial P(X_u) \sharp (\circ \, d \mathbb{X}_u) \ .
\end{equation}
\end{proposition}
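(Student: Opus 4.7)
The plan is to treat $\mathbb{X}^2$ as the non-commutative analogue of the Lévy area in rough paths theory, then derive items (ii)--(iii) from item (i) by a non-commutative sewing argument together with a Taylor expansion of the polynomials $P,Q$. For all three items, the central combinatorial tool will be the NC Wick formula \eqref{form-wick-original}, applied to even moments of semicircular increments.

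For (i) I would show that $(\mathbb{X}^{2,n}_{st}[U])_n$ is a Cauchy sequence in $L^{\infty}(\vp)$. Since the operator norm is recovered from the trace via \eqref{trace-norm-2}, the problem reduces to bounding the even moments $\vp\bigl([(\mathbb{X}^{2,n+1}_{st}-\mathbb{X}^{2,n}_{st})[U]]^{2r}\bigr)$ for every integer $r\ge 1$. Substituting the explicit definition \eqref{levy-area-appr} of the Lévy-area approximation and applying the NC Wick formula decomposes each such moment as a sum over non-crossing pairings of $4r$ indices, each summand being a product of fractional covariances $R_H$ evaluated on dyadic intervals at scales $n$ and $n+1$. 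The key point is that forming the increment $\mathbb{X}^{2,n+1}-\mathbb{X}^{2,n}$ creates, in each pairing, a gain of order $2^{-n\alpha}$ for some $\alpha=\alpha(H)>0$, which, for $H>\tfrac13$, is summable in $n$ and simultaneously yields a $2H$-Hölder estimate for $\mathbb{X}^2_{s,t}$ in $(s,t)$ — the Hölder exponent that the sewing step below will demand.

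For (ii), I would run a non-commutative Young/sewing argument based on the Chen-type relation
\[
\mathbb{X}^2_{s,t}[U]=\mathbb{X}^2_{s,u}[U]+\mathbb{X}^2_{u,t}[U]+X_{s,u}\,U\,X_{u,t},
\]
which holds at each approximation level $n$ as a consequence of the additivity of the Lebesgue integral in \eqref{levy-area-appr}, hence persists in the limit by (i). Denoting by $\Xi_{s,t}$ the one-interval summand in \eqref{correc-riem-sum}, a direct algebraic computation, combined with Chen's relation, the $H$-Hölder regularity of $X$ (in $L^{\infty}(\vp)$), the $2H$-Hölder regularity of $\mathbb{X}^2$, and first-order Taylor expansions of $P,Q$, yields an estimate of the form $\|\Xi_{s,t}-\Xi_{s,u}-\Xi_{u,t}\|_{L^{\infty}(\vp)}\lesssim |t-s|^{3H}$. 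Since $3H>1$ when $H>\tfrac13$, the standard sewing procedure then produces convergence of the corrected Riemann sum. Item (iii) is obtained from the telescoping identity $P(X_t)-P(X_s)=\sum_i\{P(X_{r_{i+1}})-P(X_{r_i})\}$ by Taylor-expanding each increment to second order with the tensor derivatives \eqref{partial-op}--\eqref{d-deux-p}:
\[
P(X_{r_{i+1}})-P(X_{r_i})=\partial P(X_{r_i})\sharp X_{r_i,r_{i+1}}+\partial^2 P(X_{r_i})\sharp\bigl(X_{r_i,r_{i+1}}\otimes X_{r_i,r_{i+1}}\bigr)+R_i,
\]
with a remainder $R_i$ of order $\|X_{r_i,r_{i+1}}\|^3_{L^{\infty}(\vp)}$. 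The quadratic terms reorganise, via the identification $\partial^2 P=\partial_1(\partial P)+\partial_2(\partial P)$, into exactly the two Lévy-area corrections appearing in \eqref{correc-riem-sum} with $P$ replaced by $\partial P^{(1)}$ and $Q$ by $\partial P^{(2)}$, while $\sum_i R_i$ vanishes in $L^{\infty}(\vp)$ at rate $|\Delta_{st}|^{3H-1}$.

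The main obstacle is without any doubt item (i): in the classical Gaussian situation one would invoke hypercontractivity or a fourth-moment theorem, but such tools are essentially unavailable here as soon as $H\ne\tfrac12$, because the disjoint increments of the NC-fBm lose their free independence. The whole control of $\mathbb{X}^{2,n+1}-\mathbb{X}^{2,n}$ therefore rests on an explicit enumeration of non-crossing pairings of a fractional covariance, and it is precisely this combinatorial accounting that dictates the critical threshold $H>\tfrac13$.
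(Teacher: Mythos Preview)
The present paper does not actually prove this proposition: it is quoted directly from \cite{deya-schott-3} (Propositions~2.8 and~2.9 there), so there is no in-paper proof to compare against. That said, your outline is a faithful summary of the strategy carried out in \cite{deya-schott-3} (building on \cite{deya-schott}): control of the telescopic increments $\mathbb{X}^{2,n+1}-\mathbb{X}^{2,n}$ through moment bounds coming from the NC Wick formula, followed by a non-commutative sewing argument resting on Chen's relation and the exponent $3H>1$.

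One point in your treatment of~(iii) deserves to be spelled out more carefully. The second-order Taylor term produces expressions of the type $X_{r_i,r_{i+1}}\,U\,X_{r_i,r_{i+1}}$, whereas the corrected Riemann sum in \eqref{correc-riem-sum} involves $\mathbb{X}^2_{r_i,r_{i+1}}[U]$ and its adjoint separately. These only match because of the algebraic identity
\[
X_{s,t}\,U\,X_{s,t}=\mathbb{X}^{2}_{s,t}[U]+\bigl(\mathbb{X}^{2}_{s,t}[U^{\ast}]\bigr)^{\ast},
\]
which follows from integration by parts at the level of the piecewise-linear approximation $X^{(n)}$ (namely $\tfrac{d}{du}\bigl(X^{(n)}_{s,u}UX^{(n)}_{s,u}\bigr)=\dot X^{(n)}_u U X^{(n)}_{s,u}+X^{(n)}_{s,u}U\dot X^{(n)}_u$) and then passes to the limit via~(i). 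Your phrase ``the quadratic terms reorganise via $\partial^2 P=\partial_1(\partial P)+\partial_2(\partial P)$'' hides exactly this step; without it the identification of the Taylor remainder with the L\'evy-area corrections of the rough integral of $\partial P(X)$ is not immediate. Once this identity is inserted, your argument for~(iii) goes through.
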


\

\begin{remark}
The above respective definitions of the Young and rough integrals are known to be consistent with each other, in the sense that if $H\in (\frac12,1)$, one has
$$\int_s^t P(X_u) (\circ \, d \mathbb{X}_u) Q(X_u)=\int_s^t P(X_u) d X_u Q(X_u) \, .$$
This is due to the \enquote{$(2H-\varepsilon)$-regularity} of $\mathbb{X}^{2}$ (see \cite[Proposition 2.8]{deya-schott-3} for a precise statement of this regularity property).
\end{remark}

\begin{remark}\label{rk:strato-int}
The above Young and rough integrals can both be seen as natural fractional extension of the Stratonovich integral. Indeed, let us recall that these integrals can also be obtained (in a less \enquote{intrinsic} way) as the limit in $\ca$ of the sequence of classical Lebesgue integrals
$$\int_s^t P(X^{(n)}_u) \, dX^{(n)}_u\, Q(X^{(n)}_u) \, .$$ 
When $H=\frac12$, this limit corresponds precisely to the Stratonovich integral (see e.g. \cite[Proposition 5.5]{deya-schott}), just as in the classical commutative framework.
\end{remark}

\section{An It{\^o}-Stratonovich formula}\label{sec:ito-strato-correc}

We are now in a position to state the main result of our study, namely a specific comparison formula between the Skorohod and the pathwise integrals with respect to the NC-fBm (as defined in Sections \ref{subsec:skoro-fbm} and \ref{subsec:pathwise-integr}, respectively). With the identifications of Proposition \ref{prop:identif-ito} and Remark \ref{rk:strato-int} in mind, such a property can legitimately be regarded as a (NC) \emph{It{\^o}-Stratonovich correction formula}.

\begin{theorem}\label{main-theo}
Fix $T>0$ and let $\{X_t, \, t\in [0,T]\}$ be a NC-fBm of Hurst index $H\in (\frac13,1)$, in a given NC probability space $(\ca,\vp)$. Then, for all $0\leq s\leq t\leq T$ and all polynomials $P,Q$, it holds that
\begin{equation}\label{correc-ito-strato}
\begin{split}
&\int_s^t P(X_u) (\circ \, d\mathbb{X}_u) Q(X_u)= \delta^X_{s,t}\big( P(X)\otimes Q(X)\big)\\
&\hspace{2cm}+H \int_s^t du \, u^{2H-1} \big( \emph{\id} \times \vp \times \emph{\id}\big) \big[\partial P(X_u) \otimes Q(X_u)+P(X_u)\otimes \partial Q (X_u) \big] \, .
\end{split}
\end{equation}
\end{theorem}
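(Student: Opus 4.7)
The approach is a direct termwise comparison of the Riemann sums defining the two integrals. Fix a partition $\Delta_{st}=\{s=r_0<\cdots<r_n=t\}$ with mesh $|\Delta_{st}|\to 0$. By Propositions \ref{prop:int-young} and \ref{prop:rough-int}, the left-hand side of \eqref{correc-ito-strato} is the $L^\infty(\vp)$-limit (hence also the $L^2(\vp)$-limit, since $\|\cdot\|_{L^2(\vp)}\le\|\cdot\|_{L^\infty(\vp)}$) of the corrected Riemann sum \eqref{correc-riem-sum}. By Proposition \ref{prop:skoro-integr} together with Definitions \ref{defi:skoro-int} and \ref{defi:div-op}, the Skorohod integral is the $L^2(\vp)$-limit of
\[
\sum_{i=0}^{n-1} P(X_{r_i}) X_{r_i,r_{i+1}} Q(X_{r_i}) \;-\; \sum_{i=0}^{n-1}(\id\times\vp\times\id)\bigl(\langle D^X(P(X_{r_i})\otimes Q(X_{r_i})),\,\1_{[r_i,r_{i+1})}\rangle_\ch\bigr).
\]
The plain Riemann sum $\sum_i P(X_{r_i})X_{r_i,r_{i+1}} Q(X_{r_i})$ appears on both sides and cancels in the difference, so the proof reduces to the analysis of the two remaining contributions.

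\textbf{Divergence contribution.} Using Definition \ref{def:deriva} and \eqref{deriv-p-x-h}, one has $D^X[P(X_{r_i})\otimes Q(X_{r_i})] = [\partial P(X_{r_i})\otimes Q(X_{r_i})+P(X_{r_i})\otimes\partial Q(X_{r_i})]\,\1_{[0,r_i)}$. Inner-producting with $\1_{[r_i,r_{i+1})}$ therefore yields the scalar $\vp(X_{r_i}X_{r_i,r_{i+1}})$ times that tensor, and \eqref{cova-NC-fBm} gives $\vp(X_{r_i}X_{r_i,r_{i+1}})=\tfrac12\{r_{i+1}^{2H}-r_i^{2H}-(r_{i+1}-r_i)^{2H}\}$. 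The telescoping piece $\tfrac12\{r_{i+1}^{2H}-r_i^{2H}\}$ forms a Stieltjes Riemann sum against $u\mapsto u^{2H}$; thanks to the $H$-Hölder continuity of $u\mapsto P(X_u)\otimes Q(X_u)$ in $L^\infty(\vp\otimes\vp)$ (a direct consequence of the analogous property of $X$), this sum converges to $H\int_s^t u^{2H-1}\,du\,(\id\times\vp\times\id)\bigl[\partial P(X_u)\otimes Q(X_u)+P(X_u)\otimes\partial Q(X_u)\bigr]$, which is exactly the correction appearing in \eqref{correc-ito-strato}. A residual of the form $-\tfrac12\sum_i(r_{i+1}-r_i)^{2H}(\id\times\vp\times\id)[\partial P(X_{r_i})\otimes Q(X_{r_i})+P(X_{r_i})\otimes\partial Q(X_{r_i})]$ is left over and must be matched against the Lévy-area terms of \eqref{correc-riem-sum}.

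\textbf{Lévy-area contribution and main obstacle.} After these cancellations, the proof of \eqref{correc-ito-strato} reduces to showing that the Lévy-area terms absorb the residual, namely that
\begin{multline*}
\sum_{i=0}^{n-1}\bigl\{(\partial P(X_{r_i})\sharp\mathbb{X}^2_{r_i,r_{i+1}})Q(X_{r_i})+P(X_{r_i})(\mathbb{X}^{2,\ast}_{r_i,r_{i+1}}\sharp\partial Q(X_{r_i}))\bigr\}\\
-\tfrac12\sum_{i=0}^{n-1}(r_{i+1}-r_i)^{2H}(\id\times\vp\times\id)\bigl[\partial P(X_{r_i})\otimes Q(X_{r_i})+P(X_{r_i})\otimes\partial Q(X_{r_i})\bigr]\ \xrightarrow[|\Delta_{st}|\to 0]{L^2(\vp)}\ 0.
\end{multline*}
This is precisely the content of Proposition \ref{prop:rough-case}, which provides the \enquote{local approximation} $(\partial P(X_{r_i})\sharp\mathbb{X}^2_{r_i,r_{i+1}})Q(X_{r_i})\simeq\tfrac12(r_{i+1}-r_i)^{2H}(\id\times\vp\times\id)[\partial P(X_{r_i})\otimes Q(X_{r_i})]$, and a symmetric statement on the $\partial Q$ side, with remainders summable to zero. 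This identification is the heart of the argument and its proof is deferred to Appendix \ref{sec:proof-approx-sum-levy}: when $H$ is close to $\tfrac13$, the $(2H-\varepsilon)$-regularity of $\mathbb{X}^2$ barely suffices to control $\sum_i(r_{i+1}-r_i)^{2H}$, and the required cancellations must be extracted from a careful manipulation of the NC Wick formula \eqref{form-wick}, along the lines of the Cass and Lim strategy \cite{cass-lim} transposed to the non-commutative setting. Granting Proposition \ref{prop:rough-case}, the three steps above combine to establish \eqref{correc-ito-strato}.
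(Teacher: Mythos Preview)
Your proposal is correct and follows essentially the same route as the paper: expand the Skorohod Riemann sum via \eqref{defi-diver}, split the covariance coefficient $\langle \1_{[0,r_i)},\1_{[r_i,r_{i+1})}\rangle_\ch=\tfrac12\{r_{i+1}^{2H}-r_i^{2H}\}-\tfrac12(r_{i+1}-r_i)^{2H}$ into a telescoping part producing the correction integral and a residual matched against the L{\'e}vy-area terms through Proposition \ref{prop:rough-case}. Two small technical points to align with the paper: (i) the paper treats $H>\tfrac12$ and $H\in(\tfrac13,\tfrac12]$ separately, since in the Young regime there are no L{\'e}vy-area terms and the residual $\sum_i(r_{i+1}-r_i)^{2H}$ vanishes on its own; (ii) Proposition \ref{prop:rough-case} is stated and proved only along the dyadic partition $t_i^n=iT/2^n$, so in the rough case the paper fixes this specific sequence of partitions (which is legitimate because Skorohod-integrability is already secured by Proposition \ref{prop:skoro-integr}).
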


\

Combining the differential rules \eqref{ito-young} and \eqref{ito-rough} with the decomposition \eqref{correc-ito-strato}, we immediately deduce the following It{\^o}-type formula for the Skorohod integral: 

\begin{corollary}\label{coro:ito-form}
Fix $T>0$ and let $\{X_t, \, t\in [0,T]\}$ be a NC-fBm of Hurst index $H\in (\frac13,1)$, in a given NC probability space $(\ca,\vp)$. Then, for all $0\leq s\leq t\leq T$ and for every polynomial $P$, it holds that
\begin{equation}\label{ito-form}
P(X_t)-P(X_s)=\delta^X_{s,t}\big( \partial P(X) \big)+H \int_s^t du \, u^{2H-1} \big( \emph{\id} \times \vp \times \emph{\id} \big) \big(\partial^2 P(X_u)\big) \, ,
\end{equation}
where the notation $\partial^2 P$ (for the second-order tensor derivative) has been introduced in \eqref{d-deux-p}.
\end{corollary}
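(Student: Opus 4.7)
The proof should be essentially mechanical, combining the already-established differential rules for the pathwise integral (Propositions \ref{prop:int-young} and \ref{prop:rough-int}) with the It\^o--Stratonovich correction formula of Theorem \ref{main-theo}. The plan is to apply the pathwise differential rule first, and then reduce the pathwise integral to a Skorohod divergence plus a drift via \eqref{correc-ito-strato}, with the final step being a combinatorial simplification.

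First, regardless of whether $H>\frac12$ or $H\in(\frac13,\frac12]$, identity \eqref{ito-young} (resp.\ \eqref{ito-rough}) tells us that
\begin{equation*}
P(X_t)-P(X_s)=\int_s^t \partial P(X_u)\sharp(\circ \, d\mathbb{X}_u).
\end{equation*}
Using the explicit expansion \eqref{partial-op}, i.e.\ $\partial P(X_u)=\sum_{k}a_k\sum_{i=0}^{k-1}X_u^i\otimes X_u^{k-1-i}$, the integral on the right splits into a finite linear combination of pathwise integrals of the form $\int_s^t X_u^i(\circ \, d\mathbb{X}_u)\,X_u^{k-1-i}$, to each of which I would apply Theorem \ref{main-theo} with the monomial choice $P_{k,i}(x):=x^i$ and $Q_{k,i}(x):=x^{k-1-i}$.

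Collecting the Skorohod terms and using linearity of $\delta^X_{s,t}$ (which is immediate from Definition \ref{defi:skoro-int}), the Skorohod part assembles exactly into $\delta^X_{s,t}(\partial P(X))$. The residual drift is a scalar multiple of
\begin{equation*}
\int_s^t du\,u^{2H-1}\,(\id\times \vp\times \id)\bigg[\sum_k a_k\sum_{i=0}^{k-1}\bigl(\partial(X_u^i)\otimes X_u^{k-1-i}+X_u^i\otimes\partial(X_u^{k-1-i})\bigr)\bigg].
\end{equation*}
The main (only) obstacle is thus a purely algebraic reindexing: one must identify the bracketed 3-tensor in terms of $\partial^2 P(X_u)$ as given by \eqref{d-deux-p}. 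For each $k$, writing $\partial(X^i)=\sum_{j=0}^{i-1}X^j\otimes X^{i-1-j}$, the first piece contributes $\sum_{i=1}^{k-1}\sum_{j=0}^{i-1}X^j\otimes X^{i-1-j}\otimes X^{k-1-i}$; changing variables $(\alpha,\beta,\gamma):=(j,i-1-j,k-1-i)$ parametrizes exactly the simplex $\{\alpha+\beta+\gamma=k-2,\;\alpha,\beta,\gamma\geq 0\}$. Likewise the second piece, under $(\alpha,\beta,\gamma):=(i,j,k-2-i-j)$, parametrizes the same simplex.

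Once this reindexing is carried out and the resulting 3-tensor is matched with $\partial^2 P(X_u)$ (up to the normalization dictated by \eqref{d-deux-p}, which determines the scalar in front of the correction), formula \eqref{ito-form} follows. No new analytic input is needed beyond what was already established in Sections \ref{sec:skoro-int} and \ref{sec:int-nc-fbm}; the argument is just a linearity/combinatorics bookkeeping on top of Theorem \ref{main-theo}.
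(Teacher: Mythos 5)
Your strategy is exactly the paper's: the paper itself offers nothing beyond the single sentence ``combining the differential rules \eqref{ito-young} and \eqref{ito-rough} with the decomposition \eqref{correc-ito-strato}, we immediately deduce\ldots'', and your linearity-plus-reindexing argument is the right way to flesh that out. The reindexing claim is also correct: for each $k$, both
\[
\sum_{i=0}^{k-1}\partial(X^i)\otimes X^{k-1-i}
\qquad\text{and}\qquad
\sum_{i=0}^{k-1}X^i\otimes\partial(X^{k-1-i})
\]
run exactly once over the simplex $\{(\alpha,\beta,\gamma):\alpha,\beta,\gamma\geq 0,\ \alpha+\beta+\gamma=k-2\}$.

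The one place you cannot afford to stay vague is the phrase ``up to the normalization dictated by \eqref{d-deux-p}, which determines the scalar in front of the correction''. Since \emph{each} of the two pieces above already exhausts the full simplex, the bracketed $3$-tensor equals $2\,\partial^2 P(X_u)$ with $\partial^2$ exactly as in \eqref{d-deux-p} (where every lattice point $i+j\leq k-2$ occurs once), so your argument produces the drift
\[
2H\int_s^t du\,u^{2H-1}\,\bigl(\id\times\vp\times\id\bigr)\bigl(\partial^2 P(X_u)\bigr)
=\int_s^t du\,\bigl(u^{2H}\bigr)'\,\bigl(\id\times\vp\times\id\bigr)\bigl(\partial^2 P(X_u)\bigr),
\]
i.e.\ twice the correction printed in \eqref{ito-form}. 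A sanity check at $H=\frac12$, $P(x)=x^2$ shows the factor $2$ is genuinely there: $\partial^2P=1\otimes1\otimes1$, Proposition \ref{prop:identif-ito} identifies $\delta^X_{s,t}(1\otimes X+X\otimes1)$ with the It\^o integral, and the free It\^o formula gives $X_t^2-X_s^2=\delta^X_{s,t}(\partial P(X))+(t-s)$, not $+(t-s)/2$. So your proof is sound in method but establishes \eqref{ito-form} with $H$ replaced by $2H$ (equivalently, with $\partial^2$ carrying an extra factor $2$); the statement as printed appears to contain a normalization slip, and your hedge lands precisely on top of it. Carry the computation of the constant to the end and state the formula you actually prove.
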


Formula \eqref{ito-form} is nothing but the extension, to every $H\in (\frac13,1)$, of the (NC) It{\^o} formula exhibited in \cite[Section 5.1]{deya-schott} for the particular free Brownian case $H=\frac12$. In the same way, Theorem \ref{main-theo} corresponds to the fractional extension of the result of \cite[Proposition 5.6]{deya-schott}, at least when working with polynomial integrands.

\smallskip

As for the comparison with the classical commutative framework, observe for instance that Corollary \ref{coro:ito-form} can be seen as the NC counterpart of the result of \cite[Theorem 5.2.2]{nualart-book}, while \eqref{correc-ito-strato} is morally the NC version of identity \eqref{ito-strato-commuta} (see also \cite{cheridito-nualart} for similar fractional formulas). A remarkable feature to be noticed here is the specific involvement of $\vp$ in the \enquote{trace} terms of \eqref{correc-ito-strato} and \eqref{ito-form}. Of course, such an involvement could not be guessed from the corresponding commutative formula (it could rather be guessed from the definition \eqref{defi-diver} of the NC divergence operator). 

\

\emph{In order to prove Theorem \ref{main-theo}, and in accordance with the splitting used in Section \ref{subsec:pathwise-integr}, we will need to treat the two situations $H>\frac12$ and $H\leq \frac12$ separately.}

\

\begin{remark}
As a particular consequence of formula \eqref{correc-ito-strato}, we can assert that the Skorohod integral $\delta^X_{s,t}\big( P(X)\otimes Q(X)\big)$ actually belongs to $\ca$ ($=L^\infty(\vp)$), and not only $L^2(\vp)$, because 
\small
$$\int_s^t P(X_u) (\circ \, d\mathbb{X}_u) Q(X_u)\quad \text{and} \quad \int_s^t du \, u^{2H-1} \big( \id \times \vp \times \id \big) \big[\partial P(X_u) \otimes Q(X_u)+P(X_u)\otimes \partial Q (X_u) \big]$$
\normalsize
both belong to $\ca$. The fact that $\delta^X_{s,t}\big( P(X)\otimes Q(X)\big)\in \ca$ was not obvious at first sight, if we only refer to Definition \ref{defi:skoro-int} and Proposition \ref{prop:skoro-integr}.
\end{remark}

\

\subsection{Proof of Theorem \ref{main-theo} in the Young case $H>\frac12$}\label{subsec:proof-young}

\

\smallskip

Let us set $\bu:=P(X) \otimes Q(X)$. Then, using the very definition \eqref{defi-diver} of $\delta^X$, as well as the first identity in \eqref{deriv-p-x-h}, we get, for any subdivision $\Delta_{st}:=\{s=r_0<r_1<\ldots <r_\ell=t\}$,
\begin{align}
&\delta^X \big( \bu^{\Delta_{st}}\big)=\sum_{i=0}^{\ell-1} \delta^X \big( P(X_{r_i}) \otimes Q(X_{r_i}) \, \1_{[r_i,r_{i+1})} \big)\nonumber\\
&=\sum_{i=0}^{\ell-1} P(X_{r_i}) X_{r_i,r_{i+1}} Q(X_{r_i})\nonumber\\
&\hspace{0.5cm}-\sum_{i=0}^{\ell-1} \big( \id \times \vp \times \id \big) \big[\partial P(X_{r_i}) \otimes Q(X_{r_i})+P(X_{r_i})\otimes \partial Q (X_{r_i}) \big]\langle \1_{[0,r_i)},\1_{[r_i,r_{i+1})} \rangle_{\ch} \, .\label{decompo-sum-young}
\end{align}
Now, setting $f_H(t):=\frac12 |t|^{2H}$, it is easy to check that 
$$\langle \1_{[0,r_i)},\1_{[r_i,r_{i+1})} \rangle_{\ch}=\big\{f_H(r_{i+1})-f_H(r_i)\big\}-f_H(r_{i+1}-r_i)\, ,$$
and since $H>\frac12$, one has here
$$f_H(r_{i+1}-r_i)=\frac12 |r_{i+1}-r_i|^{2H} \leq \frac12 |r_{i+1}-r_i| |\Delta_{st}|^{2H-1} \, .$$
As a result,
\begin{align*}
&\sum_{i=0}^{\ell-1} \big( \id \times \vp \times \id \big) \big[\partial P(X_{r_i}) \otimes Q(X_{r_i})+P(X_{r_i})\otimes \partial Q (X_{r_i}) \big]\langle \1_{[0,r_i)},\1_{[r_i,r_{i+1})} \rangle_{\ch}\\
&=\sum_{i=0}^{\ell-1} \big( \id \times \vp \times \id \big) \big[\partial P(X_{r_i}) \otimes Q(X_{r_i})+P(X_{r_i})\otimes \partial Q (X_{r_i}) \big]\big\{f_H(r_{i+1})-f_H(r_i)\big\}\\
&\hspace{0.5cm}-\sum_{i=0}^{\ell-1} \big( \id \times \vp \times \id \big) \big[\partial P(X_{r_i}) \otimes Q(X_{r_i})+P(X_{r_i})\otimes \partial Q (X_{r_i}) \big]f_H(r_{i+1}-r_i)\\
\end{align*}
with
\begin{align*}
&\bigg\|\sum_{i=0}^{\ell-1} \big( \id \times \vp \times \id \big) \big[\partial P(X_{r_i}) \otimes Q(X_{r_i})+P(X_{r_i})\otimes \partial Q (X_{r_i}) \big]f_H(r_{i+1}-r_i)\bigg\|_{L^\infty(\vp)}\\
&\leq \frac12 |\Delta_{st}|^{2H-1} |t-s|\sup_{u\in [s,t]} \big\|\big( \id \times \vp \times \id \big) \big[\partial P(X_{u}) \otimes Q(X_{u})+P(X_{u})\otimes \partial Q (X_{u}) \big]\big\|_{L^\infty(\vp)}\stackrel{|\Delta_{st}|\to 0}{\longrightarrow} 0\, .
\end{align*}
This shows that
\begin{align*}
&\sum_{i=0}^{\ell-1} \big( \id \times \vp \times \id \big) \big[\partial P(X_{r_i}) \otimes Q(X_{r_i})+P(X_{r_i})\otimes \partial Q (X_{r_i}) \big]\langle \1_{[0,r_i)},\1_{[r_i,r_{i+1})} \rangle_{\ch}\\
&\hspace{0.5cm}\stackrel{|\Delta_{st}|\to 0}{\longrightarrow} \int_s^t du \, f_H'(u) \big( \id \times \vp \times \id \big) \big[\partial P(X_u) \otimes Q(X_u)+P(X_u)\otimes \partial Q (X_u) \big] \quad \text{in} \ L^\infty(\vp) \, .
\end{align*}
Going back to \eqref{decompo-sum-young} and letting $|\Delta_{st}|$ tend to $0$, we can use the result of Proposition \ref{prop:int-young} to deduce both the convergence of $\delta^X \big( \bu^{\Delta_{st}}\big)$ in $L^\infty(\vp)$ (with limit $\delta^X_{s,t} \big( \bu\big)$) and the decomposition \eqref{correc-ito-strato}, which completes the proof of Theorem \ref{main-theo} in the case $H>\frac12$.

\smallskip


\subsection{Proof of Theorem \ref{main-theo} in the rough case $H\in (\frac13,\frac12]$}

\

\smallskip

In this situation, we additionally have to consider the second-order elements involved in \eqref{correc-riem-sum}, that is the L{\'e}vy-area terms. The cornerstone toward Theorem \ref{main-theo} will thus be the following approximation property for the sum of these terms:

\begin{proposition}\label{prop:rough-case}
Assume that $H\in (\frac13,\frac12]$, and for every $n\geq 1$, consider the dyadic partition $t_i^n:=\frac{iT}{2^n}$, $0\leq i\leq 2^n$. Then, for all $0\leq s<t\leq T$ and $d,d_1,d_2\geq 0$, one has
\begin{equation}\label{conv-loc-lev}
\sum_{i=0}^{2^n-1} X_{t_i^n}^{d_1}\Big\{\mathbb{X}^{2}_{t_i^n,t^n_{i+1}} \big[ X_{t^n_i}^d\big] -\frac12 \vp\big( X_{t_i^n}^d\big) |t^n_{i+1}-t^n_i|^{2H}\Big\}X_{t_i^n}^{d_2} \stackrel{n\to \infty}{\longrightarrow} 0 \quad \text{in} \ L^{2}(\vp) \, .
\end{equation}
\end{proposition}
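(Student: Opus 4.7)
My strategy relies on the fact that the dyadic approximation $\mathbb{X}^{2,(n)}$ (from (\ref{levy-area-appr})) admits an explicit closed form on a single dyadic interval, which both clarifies the exact meaning of the term $\tfrac12 \vp(X^d_{t^n_i}) |t^n_{i+1}-t^n_i|^{2H}$ being subtracted in (\ref{conv-loc-lev}), and suggests a natural splitting of the sum.

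First, a direct calculation using that $X^{(n)}$ is linear on $[t^n_i,t^n_{i+1}]$ with slope $\ep_n^{-1} X_{t^n_i,t^n_{i+1}}$ (with $\ep_n:=T/2^n$) gives
\[
\mathbb{X}^{2,(n)}_{t^n_i,t^n_{i+1}}[U]\;=\;\tfrac12 X_{t^n_i,t^n_{i+1}}\,U\,X_{t^n_i,t^n_{i+1}}\quad \text{for every }U\in\ca.
\]
Using this, I would decompose the left-hand side of (\ref{conv-loc-lev}) as $S_n=S_n^{(1)}+S_n^{(2)}$, where
\begin{align*}
S_n^{(1)} &:= \tfrac12 \sum_{i=0}^{2^n-1} X^{d_1}_{t^n_i}\Big\{X_{t^n_i,t^n_{i+1}}X^d_{t^n_i}X_{t^n_i,t^n_{i+1}}-\vp\big(X^d_{t^n_i}\big)|t^n_{i+1}-t^n_i|^{2H}\Big\}X^{d_2}_{t^n_i},\\
S_n^{(2)} &:= \sum_{i=0}^{2^n-1} X^{d_1}_{t^n_i}\Big\{\mathbb{X}^{2}_{t^n_i,t^n_{i+1}}[X^d_{t^n_i}]-\mathbb{X}^{2,(n)}_{t^n_i,t^n_{i+1}}[X^d_{t^n_i}]\Big\}X^{d_2}_{t^n_i}.
\end{align*}

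For the \emph{remainder} $S_n^{(2)}$, the idea is to write $\mathbb{X}^{2}_{t^n_i,t^n_{i+1}}-\mathbb{X}^{2,(n)}_{t^n_i,t^n_{i+1}}$ as the telescopic series $\sum_{m\geq n}\big(\mathbb{X}^{2,(m+1)}_{t^n_i,t^n_{i+1}}-\mathbb{X}^{2,(m)}_{t^n_i,t^n_{i+1}}\big)$ furnished by Proposition \ref{prop:rough-int}$(i)$, and to invoke the $(2H-\varepsilon)$-H\"older-type bounds established on $\mathbb{X}^{2,(m+1)}-\mathbb{X}^{2,(m)}$ in \cite{deya-schott-3}, which are precisely what makes the limit $\mathbb{X}^{2}$ exist when $H>\tfrac13$. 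This bounds each term of $S_n^{(2)}$ by $o(\ep_n^{2H})$ in $L^2(\vp)$, and a Cauchy--Schwarz summation over the $2^n$ intervals (or more precisely, an $L^2$ expansion combined with the semicircular covariance decay) yields $\|S_n^{(2)}\|_{L^2(\vp)}\to 0$.

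The \emph{main technical step} is the vanishing of $S_n^{(1)}$. Expanding $\vp\big(S_n^{(1)}(S_n^{(1)})^{\ast}\big)=\tfrac14\sum_{i,j}\vp(\cdots)$ and applying the NC Wick formula (\ref{form-wick}) to each term, I obtain a sum indexed by non-crossing pair partitions of the semicircular variables $X_{t^n_i},X_{t^n_i,t^n_{i+1}},X_{t^n_j},X_{t^n_j,t^n_{j+1}}$ appearing in the expectation. The key observation is that the partitions in which the two increments $X_{t^n_i,t^n_{i+1}}$ are paired together, \emph{and} the two increments $X_{t^n_j,t^n_{j+1}}$ are paired together, contribute exactly the terms cancelled by the subtraction of $\tfrac12\vp(X^d_{t^n_i})|t^n_{i+1}-t^n_i|^{2H}$ (once expanded through the four cross-terms coming from the double product). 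The surviving non-crossing partitions must then contain at least one pair of the form $\{X_{t^n_i,t^n_{i+1}},X_{t^n_j,t^n_{j+1}}\}$ or $\{X_{t^n_i,t^n_{i+1}},X_{t^n_k}\}$, and it remains to estimate the corresponding expectations using the fractional covariance bound
\[
\big|\vp\big(X_{t^n_i,t^n_{i+1}}X_{t^n_j,t^n_{j+1}}\big)\big|\;\lesssim\;\ep_n^{2}\max(1,|i-j|)^{2H-2},
\]
obtained from a second finite-difference of the $2H$-H\"older function $t\mapsto t^{2H}$, together with analogous controls on $\vp(X_{t^n_i,t^n_{i+1}}X_{t^n_k})$.

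\textbf{Main obstacle.} The bookkeeping in the Wick expansion of Step 3 is the delicate part: one must identify which non-crossing pair partitions get exactly cancelled by the subtracted term, and among the residual partitions, one must carefully combine the fractional off-diagonal covariance decay with the number of admissible partitions, so as to exhibit a common power of $\ep_n$ strictly greater than $1$ after summation over $(i,j)$. This is where the regularity threshold on $H$ plays a role, and this combinatorial--analytic interplay is what I expect to make the appendix proof substantially technical.
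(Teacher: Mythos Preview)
Your decomposition $S_n=S_n^{(1)}+S_n^{(2)}$ and your treatment of $S_n^{(1)}$ coincide with the paper's splitting into $I_n$ and $II_{N,n}$ and with its handling of $I_n$: same closed form $\tfrac12 X_{t^n_i,t^n_{i+1}}UX_{t^n_i,t^n_{i+1}}$, same Wick expansion, same cancellation of the pairings that connect the two increments inside each block. (The paper phrases the residual covariance estimates through the 2D $\tfrac{1}{2H}$-variation of $R_H$ rather than through your explicit decay $|\vp(X_{t^n_i,t^n_{i+1}}X_{t^n_j,t^n_{j+1}})|\lesssim \ep_n^{2H}|i-j|^{2H-2}$, but for the fBm covariance these are interchangeable.)

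The gap is in your plan for $S_n^{(2)}$. The $(2H-\varepsilon)$-H\"older bounds on $\mathbb{X}^{2,(m+1)}-\mathbb{X}^{2,(m)}$ from \cite{deya-schott-3} are $L^\infty(\vp)$ bounds; summing the telescopic series they give at best $\|\mathbb{X}^2_{t^n_i,t^n_{i+1}}[X^d_{t^n_i}]-\mathbb{X}^{2,(n)}_{t^n_i,t^n_{i+1}}[X^d_{t^n_i}]\|_{L^\infty(\vp)}\lesssim \ep_n^{2H-\varepsilon}$ per interval, and with $2^n$ intervals this is \emph{not} summable when $H\le \tfrac12$. Your parenthetical ``$L^2$ expansion combined with the semicircular covariance decay'' points in the right direction, but what is actually needed is a computation of the same depth as for $S_n^{(1)}$: the paper writes out $\|II_{N,n}\|_{L^2(\vp)}^2$ explicitly, uses the closed form
\[
\mathbb{X}^{2,(m+1)}_{t^n_i,t^n_{i+1}}[U]-\mathbb{X}^{2,(m)}_{t^n_i,t^n_{i+1}}[U]=\tfrac12\sum_{j}\big(Y^{(m)}_{2j}UY^{(m)}_{2j+1}-Y^{(m)}_{2j+1}UY^{(m)}_{2j}\big),
\]
applies the NC Wick formula to the resulting double sum over $(i_1,i_2,m_1,m_2,j_1,j_2)$, observes that the antisymmetry in $(2j,2j+1)$ kills exactly the pairings that connect the two $Y$'s inside each block, and only then estimates the surviving pairings via the 2D $\tfrac{1}{2H}$-variation of $R_H$ (this is where the geometric decay in $m_1,m_2$ comes from, allowing summation over all levels $m\ge n$). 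In short, $S_n^{(2)}$ is not a soft remainder that can be dispatched by quoting existing regularity; it requires the same combinatorial work as $S_n^{(1)}$, carried out uniformly over the telescopic levels.
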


\smallskip

For the sake of clarity, we have postponed the (technical) proof of this assertion to the subsequent Appendix \ref{sec:proof-approx-sum-levy}. Let us now see how the property will lead to our main statement.

\begin{proof}[Proof of Theorem \ref{main-theo} when $\frac13<H\leq \frac12$]

\

\smallskip

We will focus on the situation where $s=0$ and $t=T$, but it is easy to see that the case of a general interval $[s,t]$ could be handled along similar arguments (we leave these modifications, and especially the adaptation of Proposition \ref{prop:rough-case}, as an exercise to the reader).  

\smallskip
 
Just as in Section \ref{subsec:proof-young}, let us set $\bu:=P(X) \otimes Q(X)$. Also, let us define $\bu^n:=\bu^{\Delta^n_{0T}}$, where $\Delta^n_{0T}=(t_i^n)$ refers to the dyadic approximation of $[0,T]$, that is $t_i^n:=\frac{iT}{2^n}$. 

\smallskip

Now observe that
\begin{align}
&\delta^X \big( \bu^n\big)=\sum_{i=0}^{2^n-1} \delta^X \big( P(X_{t_i^n}) \otimes Q(X_{t_i^n}) \, \1_{[t_i^n,t_{i+1}^n)} \big)\nonumber\\
&=\sum_{i=0}^{2^n-1} P(X_{t_i^n}) X_{t_i^n,t_{i+1}^n} Q(X_{t_i^n})\nonumber\\
&\hspace{0.5cm}-\sum_{i=0}^{2^n-1} \big( \id \times \vp \times \id \big) \big[\partial P(X_{t_i^n}) \otimes Q(X_{t_i^n})+P(X_{t_i^n})\otimes \partial Q (X_{t_i^n}) \big]\langle \1_{[0,t_i^n)},\1_{[t^n_i,t^n_{i+1})} \rangle_{\ch} \nonumber \\
&=\sum_{i=0}^{2^n-1} \Big[ P(X_{t_i^n}) X_{t_i^n,t_{i+1}^n} Q(X_{t_i^n}) +\big( \partial P (X_{t_i^n}) \sharp \mathbb{X}^2_{t_i^n,t_{i+1}^n} \big) Q(X_{t_i^n})+P(X_{t_i^n}) \big( \mathbb{X}^{2,\ast}_{t_i^n,t_{i+1}^n} \sharp \partial Q (X_{t_i^n}) \big) \Big]\nonumber\\
&-\frac12 \sum_{i=0}^{2^n-1} \big( \id \times \vp \times \id \big) \big[\partial P(X_{t_i^n}) \otimes Q(X_{t_i^n})+P(X_{t_i^n})\otimes \partial Q (X_{t_i^n}) \big]\big\{|t_{i+1}^n|^{2H}-|t^n_i|^{2H} \big\}-R_n \, ,\label{decompo-skorohod}
\end{align}
where we have set $R_n:=R_n^1+R_n^2$, with
$$
R^1_n:=\sum_{i=0}^{2^n-1} \Big[ \big( \partial P (X_{t_i^n}) \sharp \mathbb{X}^2_{t_i^n,t_{i+1}^n} \big) Q(X_{t_i^n})-\frac12 \big( \id \times \vp \times \id \big) \big[\partial P(X_{t_i^n}) \otimes Q(X_{t_i^n})\big] |t_{i+1}^n-t_i^n|^{2H} \Big]
$$
and
$$
R_n^2:=\sum_{i=0}^{2^n-1} \Big[ P(X_{t_i^n}) \big( \mathbb{X}^{2,\ast}_{t_i^n,t_{i+1}^n} \sharp \partial Q (X_{t_i^n}) \big)-\frac12 \big( \id \times \vp \times \id \big) \big[P(X_{t_i^n})\otimes \partial Q (X_{t_i^n}) \big]   |t_{i+1}^n-t_i^n|^{2H}\Big] \, .
$$
Assuming for simplicity that $P(x):=x^p$ and $Q(x):=x^q$, we can write $R^1_n$ as
$$R_n^1=\sum_{k=0}^{p-1} \sum_{i=0}^{2^n-1}X_{t_i^n}^k \Big[ \big(   \mathbb{X}^2_{t_i^n,t_{i+1}^n}\big[X_{t_i^n}^{p-1-k}\big]  -\frac12 \vp\big(X_{t_i^n}^{p-1-k}\big) |t_{i+1}^n-t_i^n|^{2H} \Big]X_{t_i^n}^q \, ,$$
and thus, by Proposition \ref{prop:rough-case}, we can assert that $R^1_n \to 0$ in $L^2(\vp)$ as $n\to \infty$.

\smallskip

The same argument clearly applies to $R^2_n$.

\smallskip

Since $\bu$ is known to be Skorohod integrable (by Proposition \ref{prop:skoro-integr}), we can now let $n$ tend to infinity in both sides of \eqref{decompo-skorohod} (considering the $L^2(\vp)$-norm), which, together with the convergence result of Proposition \ref{prop:rough-int} (item $(ii)$), yields the desired decomposition \eqref{correc-ito-strato}.

\end{proof}


\appendix

\section{Proof of Proposition \ref{prop:rough-case}}\label{sec:proof-approx-sum-levy}

According to \cite[Proposition 2.8]{deya-schott-3}, we know that the product L{\'e}vy area $\mathbb{X}^{2}$ is $(2H-\varepsilon)$-H{\"o}lder regular, in the following specific sense: for every polynomial expression $P$ (say of $r$ arguments), there exists a constant $c_P>0$ such that for all $0\leq s<t\leq T$ and  $u_1,\ldots,u_r\in [0,s]$, 
$$\big\| \mathbb{X}^{2}_{s,t}\big[P(X_{u_1},\ldots,X_{u_r})\big] \big\|_{L^\infty(\vp)} \leq c_P \, |t-s|^{2H-\varepsilon} \, .$$
Unfortunately, such a regularity property is clearly not sufficient to deduce the desired local approximation \eqref{conv-loc-lev}. We will thus need to go deeper into the properties of $\mathbb{X}^{2}$, and accordingly into the properties of $X$.

\smallskip

In fact, just as in the commutative situation (see e.g. the proof of \cite[Proposition 5.1]{cass-lim}), we will be led to exploit, at some point of our analysis, the 2D-regularity properties of the fractional covariance $R_H$ (defined in \eqref{cova-NC-fBm}). In order to state these properties, let us recall first that for any function $f:[0,T]^2 \to \R$, and setting
$$f\begin{pmatrix} s&t\\u &v \end{pmatrix}:=f(t,v)-f(t,u)-f(s,v)+f(s,u) \, ,$$
we define, for every $\rho \geq 1$, the (2D) $\rho$-variation of $f$ on $I\times J \subset [0,T]^2$ as
$$\|f\|_{\rho-var;I\times J}:= \sup_{(s_i)\in \cp_I,(t_j)\in \cp_J}\bigg( \sum_{i,j} \bigg| f\begin{pmatrix} s_i & s_{i+1}\\ t_j & t_{j+1} \end{pmatrix}\bigg|^\rho\bigg)^{\frac{1}{\rho}}$$
If $\|f\|_{\rho-var;[0,T]^2} < \infty$, then we say that $f$ is of finite (2D) $\rho$-variation on $[0,T]^2$, and in this case, we know that the map
$$\omega_{f,\rho}(I\times J):=\|f\|_{\rho-var;I\times J}^\rho \ , \quad I,J\subset [0,T] \, , $$
defines a 2D-control on $[0,T]^2$, that is: for all rectangles $R_1,R_2,R \subset [0,T]^2$ such that $R_1\cap R_2=\emptyset$ and $R_1\cup R_2\subset R$,
$$\omega_{f,\rho}(R_1)+\omega_{f,\rho}(R_2)\leq \omega_{f,\rho}(R) \ .$$

\smallskip

With these preliminaries in mind, the specific regularity properties that we shall use in the sequel can be stated as follows (see e.g. \cite[Proposition 14]{friz-victoir-gauss} for a proof of these results):
\begin{proposition}
Let $R_H$ stand for the fractional covariance of Hurst index $H$ (see \eqref{cova-NC-fBm}), and assume that $H\in(\frac13,\frac12]$. Then it holds that
\begin{equation}\label{2d-var-r}
\big\|R_H\big\|_{\frac{1}{2H}-var;[0,T]^2} \ < \ \infty \, ,
\end{equation}
and for all $0\leq s<t\leq T$ and $u\in [0,T]$, one has
\begin{equation}\label{bou-r-h-2}
\big| R_H(t,u)-R_H(s,u)\big| \leq c_H |t-s|^{2H} \, .
\end{equation}
\end{proposition}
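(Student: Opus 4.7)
\textbf{Hölder bound \eqref{bou-r-h-2}.} The explicit formula for $R_H$ gives
\[
R_H(t,u) - R_H(s,u) = \tfrac{1}{2}\bigl[(t^{2H} - s^{2H}) - (|t-u|^{2H} - |s-u|^{2H})\bigr].
\]
Since $2H \leq 1$, the map $x \mapsto |x|^{2H}$ is $2H$-Hölder continuous on $\R$ with constant $1$; applied to each bracketed difference (using $\bigl||t-u|-|s-u|\bigr| \leq |t-s|$ for the second), this bounds both terms by $|t-s|^{2H}$, proving \eqref{bou-r-h-2} with $c_H = 1$.

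\textbf{Variation bound \eqref{2d-var-r}.} Since $s^{2H}$ and $t^{2H}$ depend on a single variable, they produce zero rectangular increments, so the problem reduces to bounding the 2D $(1/2H)$-variation of $g(s,t) := |t-s|^{2H}$. Given arbitrary partitions $(s_i)_{0\leq i\leq m}$ and $(t_j)_{0\leq j\leq n}$ of $[0,T]$, I would split the index pairs $(i,j)$ into the \emph{off-diagonal} family $\mathcal{O}$, where $[s_i,s_{i+1}]$ and $[t_j,t_{j+1}]$ are disjoint, and the \emph{diagonal} family $\mathcal{D}$, where they overlap. On $\mathcal{O}$, the function $g$ is smooth on the rectangle and the Fubini identity
\[
g\begin{pmatrix} s_i & s_{i+1} \\ t_j & t_{j+1} \end{pmatrix} = 2H(1-2H)\iint_{[s_i,s_{i+1}]\times[t_j,t_{j+1}]} |u-r|^{2H-2}\,du\,dr \geq 0
\]
shows the increments are non-negative and super-additive; $\sum_{\mathcal{O}} g\begin{pmatrix}\cdot\end{pmatrix}$ thus telescopes to $O(T^{2H})$, and together with the uniform bound $g\begin{pmatrix}\cdot\end{pmatrix} \leq C_H T^{2H}$ and the convexity inequality $\sum x_{ij}^{\rho} \leq (\sup x_{ij})^{\rho-1}\sum x_{ij}$ (valid for $\rho = 1/(2H) \geq 1$), this controls the $\mathcal{O}$-contribution uniformly in the partitions. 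On $\mathcal{D}$, two applications of the $2H$-Hölder regularity of $|x|^{2H}$ combined with the reverse triangle inequality yield $|g\begin{pmatrix}\cdot\end{pmatrix}| \leq 2\min(|s_{i+1}-s_i|^{2H},|t_{j+1}-t_j|^{2H})$, so $|g\begin{pmatrix}\cdot\end{pmatrix}|^{1/(2H)} \leq 2^{1/(2H)}\min(|s_{i+1}-s_i|, |t_{j+1}-t_j|)$; a direct case analysis (classifying $\mathcal{D}$-pairs by which interval contains the other, versus ``interlocking'' pairs) shows that $\sum_{\mathcal{D}} \min(|s_{i+1}-s_i|, |t_{j+1}-t_j|) \leq 4T$.

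\textbf{Main obstacle.} The subtle part is the $\mathcal{D}$-sum: a naive count-times-max estimate diverges, since $|\mathcal{D}|$ can be as large as $m+n+1$ while individual minima are only controlled by the mesh. The clean bound requires separating pairs where one interval sits inside the other (controlled by $T$ via telescoping of the overlaps) from interlocking pairs, of which each $[s_i,s_{i+1}]$ (resp.\ $[t_j,t_{j+1}]$) participates in at most two, one per endpoint falling strictly inside a $t$-interval (resp.\ $s$-interval). A more streamlined derivation of the full variation bound, avoiding this explicit bookkeeping, is the content of \cite[Proposition 14]{friz-victoir-gauss}, which one may invoke directly.
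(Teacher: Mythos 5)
First, note that the paper offers no proof of this proposition beyond the citation of \cite[Proposition 14]{friz-victoir-gauss}, so the only meaningful comparison is with your self-contained sketch. Your proof of the H{\"o}lder bound \eqref{bou-r-h-2} is correct (with $c_H=1$), and so is your treatment of the diagonal family $\mathcal{D}$: the bound $|g\left(\begin{smallmatrix}\cdot&\cdot\\ \cdot&\cdot\end{smallmatrix}\right)|^{1/(2H)}\le 2^{1/(2H)}\min(|s_{i+1}-s_i|,|t_{j+1}-t_j|)$, together with the containment/interlocking bookkeeping, does give a bound $\lesssim T$ for the diagonal contribution.

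The off-diagonal step, however, contains a false claim: $\sum_{\mathcal O}g\left(\begin{smallmatrix}\cdot&\cdot\\ \cdot&\cdot\end{smallmatrix}\right)$ is \emph{not} $O(T^{2H})$ uniformly over partitions. The sum over \emph{all} index pairs telescopes to $g\left(\begin{smallmatrix}0&T\\0&T\end{smallmatrix}\right)=-2T^{2H}$, while each diagonal square $[r_i,r_{i+1}]^2$ of the uniform partition of mesh $T/n$ contributes $-2(T/n)^{2H}$, so that $\sum_{\mathcal O}g=-2T^{2H}-\sum_{\mathcal D}g\sim 2\,T^{2H}n^{1-2H}\to\infty$ when $H<\frac12$. (Equivalently, super-additivity would compare $\sum_{\mathcal O}g$ with $2H(1-2H)\iint_{\{u\neq r\}}|u-r|^{2H-2}\,du\,dr$, which diverges because $2H-2<-1$.) Hence the convexity inequality applied to the \emph{global} sum $\sum_{\mathcal O}|g|$ cannot close the argument. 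The repair --- essentially what \cite{friz-victoir-gauss} does --- is to argue one row at a time: for fixed $i$, the indices $j$ with $[t_j,t_{j+1}]$ disjoint from $[s_i,s_{i+1}]$ form two contiguous runs; within each run the increments have a common sign and telescope in $j$, so that $\sum_{j\in\mathcal O_i}|g(R_{ij})|\le 4|s_{i+1}-s_i|^{2H}$ by the one-dimensional H{\"o}lder bound underlying \eqref{bou-r-h-2}. Applying your convexity inequality \emph{row-wise}, with $\sup_j|g(R_{ij})|\le 2|s_{i+1}-s_i|^{2H}$, then yields $\sum_{j\in\mathcal O_i}|g(R_{ij})|^{1/(2H)}\le c_H|s_{i+1}-s_i|$, and summing over $i$ gives $c_H T$. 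With this modification (or by simply invoking the cited reference, as the paper itself does), the proof is complete.
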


\

\large
\begin{center}
\textbf{Proof of Proposition \ref{prop:rough-case}.}
\end{center}
\normalsize

\smallskip

Throughout the proof, we will use the notation $A\lesssim B$ in order to signify that there exists an irrelevant constant $c$ such that $A\leq c B$.

\

For each $n\geq 1$, let us first rewrite the quantity under consideration, that is
$$\sum_{i=0}^{2^n-1} X_{t_i^n}^{d_1}\Big\{\mathbb{X}^{2}_{t_i^n,t^n_{i+1}} \big[ X_{t^n_i}^d\big] -\frac12 \vp\big( X_{t_i^n}^d\big) \vp\big( X_{t_i^n,t_{i+1}^n}^2\big)\Big\}X_{t_i^n}^{d_2} \, ,$$
as the limit in $L^\infty(\vp)$, and as $N\to \infty$, of the sequence
\begin{equation}\label{appro-x-2}
\sum_{i=0}^{2^n-1} X_{t_i^n}^{d_1}\Big\{\mathbb{X}^{2,N}_{t_i^n,t^n_{i+1}} \big[ X_{t^n_i}^d\big] -\frac12 \vp\big( X_{t_i^n}^d\big) \vp\big( X_{t_i^n,t_{i+1}^n}^2\big)\Big\}X_{t_i^n}^{d_2} \, ,
\end{equation}
where $\mathbb{X}^{2,N}$ stands for the approximation of $\mathbb{X}^{2}$ given by \eqref{levy-area-appr}. Thus, in the sequel, we will search for a bound of \eqref{appro-x-2} (in $L^2(\vp)$) that is uniform in $N$ and converges to $0$ as $n\to\infty$. To this end, let us consider, for every $N\geq n$, the telescopic decomposition 
$$\mathbb{X}^{2,N}_{t_i^n,t^n_{i+1}} \big[ X_{t^n_i}^d\big]=\mathbb{X}^{2,n}_{t_i^n,t^n_{i+1}} \big[ X_{t^n_i}^d\big]+\sum_{m=n}^{N-1} \Big\{\mathbb{X}^{2,m+1}_{t_i^n,t^n_{i+1}} \big[ X_{t^n_i}^d\big]-\mathbb{X}^{2,m}_{t_i^n,t^n_{i+1}} \big[ X_{t^n_i}^d\big]\Big\} \, ,$$
which allows us to expand the quantity in \eqref{appro-x-2} as
\begin{align}
&\sum_{i=0}^{2^n-1} X_{t_i^n}^{d_1}\Big\{\mathbb{X}^{2,n}_{t_i^n,t^n_{i+1}} \big[ X_{t^n_i}^d\big] -\frac12 \vp\big( X_{t_i^n}^d\big) \vp\big( X_{t_i^n,t_{i+1}^n}^2\big)\Big\}X_{t_i^n}^{d_2}\nonumber\\
&\hspace{1cm}+\sum_{i=0}^{2^n-1} \sum_{m=n}^{N-1}X_{t_i^n}^{d_1} \Big\{\mathbb{X}^{2,m+1}_{t_i^n,t^n_{i+1}} \big[ X_{t^n_i}^d\big]-\mathbb{X}^{2,m}_{t_i^n,t^n_{i+1}} \big[ X_{t^n_i}^d\big]\Big\}X_{t_i^n}^{d_2} \ =: \ I_n+II_{N,n} \, .\label{decompo-i-ii}
\end{align}
For the sake of clarity, let us set from now on 
\begin{equation}\label{y-m-j}
Y^{(m)}_j:=X_{t_j^{m+1},t_{j+1}^{m+1}} \quad \text{for all} \ m\geq 1 \ \text{and}\ j=0,\ldots,2^{m+1}-1\, .
\end{equation}

\subsection{Bound for $I_n$} The whole point here is that $\mathbb{X}^{2,n}_{t_i^n,t^n_{i+1}} \big[ X_{t^n_i}^d\big]$ actually reduces to
$$\mathbb{X}^{2,n}_{t_i^n,t^n_{i+1}} \big[ X_{t^n_i}^d\big]=\frac12 Y^{(n-1)}_i X_{t_i^n}^d Y^{(n-1)}_i\, ,$$
and so
\begin{align}
\big\|I_n\big\|_{L^2(\vp)}^2&=\frac14\sum_{i_1,i_2=0}^{2^n-1} \vp\bigg( \Big\{ X_{t_{i_1}^n}^{d_1} \Big[ Y^{(n-1)}_{i_1} X_{t_{i_1}^n}^d Y^{(n-1)}_{i_1}-\vp\big( X_{t_{i_1}^n}^d \big) \vp\big( \big( Y^{(n-1)}_{i_1}\big)^2\big) \Big] X_{t_{i_1}^n}^{d_2} \Big\}\nonumber\\
&\hspace{4cm} \cdot \Big\{ X_{t_{i_2}^n}^{d_2} \Big[ Y^{(n-1)}_{i_2} X_{t_{i_2}^n}^d Y^{(n-1)}_{i_2}-\vp\big( X_{t_{i_2}^n}^d \big) \vp\big( \big( Y^{(n-1)}_{i_2}\big)^2\big) \Big] X_{t_{i_2}^n}^{d_1} \Big\}\bigg)\nonumber\\
&=\frac14 \sum_{i_1,i_2=0}^{2^n-1} \bigg[ \vp \Big( X_{t_{i_1}}^{d_1} Y_{i_1} X_{t_{i_1}}^d Y_{i_1} X_{t_{i_1}}^{d_2} X_{t_{i_2}}^{d_2} Y_{i_2} X_{t_{i_2}}^d Y_{i_2} X_{t_{i_2}}^{d_1} \Big)\nonumber\\
&\hspace{2cm}-\vp \Big( X_{t_{i_1}}^{d_1} Y_{i_1} X_{t_{i_1}}^d Y_{i_1} X_{t_{i_1}}^{d_2} X_{t_{i_2}}^{d_2} X_{t_{i_2}}^{d_1} \Big)\vp\Big( X_{t_{i_2}}^d \Big) \vp\Big( Y_{i_2}^2\Big)\nonumber\\
&\hspace{2cm}-\vp \Big( X_{t_{i_1}}^{d_1}  X_{t_{i_1}}^{d_2} X_{t_{i_2}}^{d_2} Y_{i_2} X_{t_{i_2}}^d Y_{i_2} X_{t_{i_2}}^{d_1} \Big)\vp\Big( Y_{i_1}^2\Big) \vp\Big(X_{t_{i_1}}^d\Big)\nonumber\\
&\hspace{2cm}+\vp \Big( X_{t_{i_1}}^{d_1} X_{t_{i_1}}^{d_2} X_{t_{i_2}}^{d_2} X_{t_{i_2}}^{d_1} \Big)\vp\Big(X_{t_{i_1}}^d\Big) \vp\Big(X_{t_{i_2}}^d\Big)\vp\Big( Y_{i_1}^2\Big) \vp\Big( Y_{i_2}^2\Big)\bigg] \ ,\label{exp-i-n}
\end{align}
where, for more clarity, we have deliberately omitted the dependence on $n$ in the latter sum (in particular, each $Y_i$ must be read as $Y^{(n-1)}_i$).

\smallskip

Let us focus on the first term of this sum: according to the NC Wick formula (\ref{form-wick}), and setting $D:=2(d+d_1+d_2)+4$, this term can be expanded as
\begin{align}
&\vp \Big( X_{t_{i_1}}^{d_1} Y_{i_1} X_{t_{i_1}}^d Y_{i_1} X_{t_{i_1}}^{d_2} X_{t_{i_2}}^{d_2} Y_{i_2} X_{t_{i_2}}^d Y_{i_2} X_{t_{i_2}}^{d_1} \Big)\nonumber\\
&=\sum_{\pi\in NC_2(D)} \vp_{\pi}\Big(\underbrace{X_{t_{i_1}},\ldots,X_{t_{i_1}}}_{d_1 \, \text{occur.}},Y_{i_1},\underbrace{X_{t_{i_1}},\ldots,X_{t_{i_1}}}_{d \, \text{occur.}},Y_{i_1},\underbrace{X_{t_{i_1}},\ldots,X_{t_{i_1}}}_{d_2 \, \text{occur.}},\nonumber\\
&\hspace{4cm}\underbrace{X_{t_{i_2}},\ldots,X_{t_{i_2}}}_{d_2 \, \text{occur.}},Y_{i_2},\underbrace{X_{t_{i_2}},\ldots,X_{t_{i_2}}}_{d \, \text{occur.}},Y_{i_2},\underbrace{X_{t_{i_2}},\ldots,X_{t_{i_2}}}_{d_1 \, \text{occur.}} \Big)\label{ka-pi}\\
&=\sum_{\substack{\pi\in NC_2(D)\\(d_1+1,d_1+d+2) \notin \pi\\(D-(d_1+d+1),D-d_1)\notin \pi}} \vp_{\pi}\Big(\ldots\Big)+\sum_{\substack{\pi\in NC_2(D)\\(d_1+1,d_1+d+2) \in \pi}} \vp_{\pi}\Big(\ldots\Big)\nonumber\\
&\hspace{3cm}+\sum_{\substack{\pi\in NC_2(D)\\(D-(d_1+d+1),D-d_1)\in \pi}} \vp_{\pi}\Big(\ldots\Big)-\sum_{\substack{\pi\in NC_2(D)\\(d_1+d+1,d_1+d+2) \in \pi\\(D-(d_1+d+1),D-d_1)\in \pi}} \vp_{\pi}\Big(\ldots\Big) \ ,\label{sum-ka-pi}
\end{align}
noting that the pair $(d_1+1,d_1+d+2)$, resp. $(D-(d_1+d+1),D-d_1)$, is the one that \enquote{connects} the two occurrences of the variable $Y_{i_1}$, resp. $Y_{i_2}$, in the quantity $\vp_\pi\big(\ldots\big)$ under consideration here. Using the NC Wick formula again, it is easy to see that the last three sums actually correspond to

$$\sum_{\substack{\pi\in NC_2(D)\\(d_1+1,d_1+d+2) \in \pi}} \vp_{\pi}\Big(\ldots\Big)=\vp \Big( X_{t_{i_1}}^{d_1}  X_{t_{i_1}}^{d_2} X_{t_{i_2}}^{d_2} Y_{i_2} X_{t_{i_2}}^d Y_{i_2} X_{t_{i_2}}^{d_1} \Big)\vp\Big( Y_{i_1}^2\Big) \vp\Big(X_{t_{i_1}}^d\Big) \ ,$$
$$\sum_{\substack{\pi\in NC_2(D)\\(D-(d_1+d+1),D-d_1)\in \pi}} \vp_{\pi}\Big(\ldots\Big)=\vp \Big( X_{t_{i_1}}^{d_1} Y_{i_1} X_{t_{i_1}}^d Y_{i_1} X_{t_{i_1}}^{d_2} X_{t_{i_2}}^{d_2} X_{t_{i_2}}^{d_1} \Big)\vp\Big( X_{t_{i_2}}^d \Big) \vp\Big( Y_{i_2}^2\Big)$$
and
$$\sum_{\substack{\pi\in NC_2(D)\\(d_1+d+1,d_1+d+2) \in \pi\\(D-(d_1+d+1),D-d_1)\in \pi}} \vp_{\pi}\Big(\ldots\Big)=\vp \Big( X_{t_{i_1}}^{d_1} X_{t_{i_1}}^{d_2} X_{t_{i_2}}^{d_2} X_{t_{i_2}}^{d_1} \Big)\vp\Big(X_{t_{i_1}}^d\Big) \vp\Big(X_{t_{i_2}}^d\Big)\vp\Big( Y_{i_1}^2\Big) \vp\Big( Y_{i_2}^2\Big)\, .$$
Therefore, combining \eqref{exp-i-n} and \eqref{sum-ka-pi}, we end up with the formula
$$\big\|I_n\big\|_{L^2(\vp)}^2=\frac14\sum_{i_1,i_2=0}^{2^n-1}\sum_{\substack{\pi\in NC_2(D)\\(d_1+1,d_1+d+2) \notin \pi\\(D-(d_1+d+1),D-d_1)\notin \pi}} \vp_{\pi}\Big(\ldots\Big) \, ,$$where $\vp_\pi\big(\ldots \big)$ stands (again) for the quantity described in \eqref{ka-pi}.

\smallskip

Now, given $\pi\in NC_2(D)$ such that $(d_1+1,d_1+d+2) \notin \pi$ and $(D-(d_1+d+1),D-d_1)\notin \pi$, and considering $\vp_\pi\big(\ldots\big)$ in \eqref{ka-pi}, we know that the two occurrences of $Y_{i_1}$ cannot be connected via $\pi$, and the same is true for the two occurrences of $Y_{i_2}$, which easily leads us to the estimate 
\begin{align}
\big\|I_n\big\|_{L^2(\vp)}^2&\lesssim\sum_{a,b,c,d=1}^2\sum_{i_1,i_2=0}^{2^n-1} \Big\{ \big|\vp\big( Y_{i_1} Y_{i_2}\big)\big|^2+\big|\vp\big(Y_{i_1}Y_{i_2}\big)\vp\big(Y_{i_1}X_{t_{i_a}}\big) \vp\big(Y_{i_2}X_{t_{i_b}}\big)\big|\nonumber\\
&\hspace{4cm}+\big|\vp\big(Y_{i_1}X_{t_{i_a}}\big)\vp\big(Y_{i_1}X_{t_{i_b}}\big) \vp\big(Y_{i_2}X_{t_{i_c}}\big)  \vp\big(Y_{i_2}X_{t_{i_d}}\big)\big|\Big\} \, ,\label{bou-i-n-inter}
\end{align}
where the proportional constant in $\lesssim$ depends on $d,d_1,d_2$ and $\sup_{u\in [0,T]}\|X_u\|_{L^\infty(\vp)}$, but not on $n$.

\smallskip

As far as the first summand is concerned, we have, by \eqref{2d-var-r},
\begin{equation}\label{bou-1-sum-i-1-i-2}
\sum_{i_1,i_2=0}^{2^n-1}  \big|\vp\big( Y_{i_1} Y_{i_2}\big)\big|^2=\sum_{i_1,i_2=0}^{2^n-1} \bigg| R\begin{pmatrix} t_{i_1}^n & t_{i_1+1}^n\\ t_{i_2}^n& t_{i_2+1}^n \end{pmatrix}\bigg|^{\frac{1}{2H}}\big|\vp\big( Y_{i_1} Y_{i_2}\big)\big|^{2-\frac{1}{2H}}\lesssim 2^{-n\varepsilon}  \big\|R\big\|^{\frac{1}{2H}}_{\frac{1}{2H}-var;[0,T]^2} \, ,
\end{equation}
for some $\varepsilon >0$. Then, for any fixed $a,b\in \{1,2\}$,
\begin{align}
&\sum_{i_1,i_2=0}^{2^n-1} \big|\vp\big(Y_{i_1}Y_{i_2}\big)\vp\big(Y_{i_1}X_{t_{i_a}}\big)\vp\big(Y_{i_2}X_{t_{i_b}}\big)\big|\nonumber\\
&\leq \bigg( \sum_{i_1,i_2=0}^{2^n-1} \big|\vp\big(Y_{i_1}Y_{i_2}\big)\big|^2\bigg)^{1/2}\bigg(\sum_{i_1,i_2=0}^{2^n-1} \big|\vp\big(Y_{i_1}X_{t_{i_a}}\big)\big|^2 \big|\vp\big(Y_{i_2}X_{t_{i_b}}\big)\big|^2 \bigg)^{1/2}\nonumber\\
&\lesssim \bigg(2^{-n\varepsilon}  \big\|R\big\|^{\frac{1}{2H}}_{\frac{1}{2H}-var;[0,T]^2}\bigg)^{1/2} \bigg(\sum_{i_1,i_2=0}^{2^n-1} |t_{i_1+1}-t_{i_1}|^{4H} |t_{i_2+1}-t_{i_2}|^{4H} \bigg)^{1/2} \lesssim  2^{-n\varepsilon} \, ,\label{bou-2}
\end{align}
where we have used the bound \eqref{bou-1-sum-i-1-i-2} and the estimate \eqref{bou-r-h-2}.
Finally, for any fixed $a,b,c,d\in \{1,2\}$, and with the same arguments as above,
\begin{align}
&\sum_{i_1,i_2=0}^{2^n-1} \big|\vp\big(Y_{i_1}X_{t_{i_a}}\big)\vp\big(Y_{i_1}X_{t_{i_b}}\big) \vp\big(Y_{i_2}X_{t_{i_c}}\big)  \vp\big(Y_{i_2}X_{t_{i_d}}\big)\big|\nonumber\\
&\leq \bigg(\sum_{i_1,i_2=0}^{2^n-1} \big|\vp\big(Y_{i_1}X_{t_{i_a}}\big)\big|^2 \big|\vp\big(Y_{i_2}X_{t_{i_c}}\big)\big|^2 \bigg)^{1/2}\bigg(\sum_{i_1,i_2=0}^{2^n-1} \big|\vp\big(Y_{i_1}X_{t_{i_b}}\big)\big|^2 \big|\vp\big(Y_{i_2}X_{t_{i_d}}\big)\big|^2 \bigg)^{1/2}\nonumber\\
&\lesssim \sum_{i_1,i_2=0}^{2^n-1} |t_{i_1+1}-t_{i_1}|^{4H} |t_{i_2+1}-t_{i_2}|^{4H} \lesssim 2^{-n\varepsilon} \, .\label{bou-3}
\end{align}
Injecting \eqref{bou-1-sum-i-1-i-2}, \eqref{bou-2} and \eqref{bou-3} into \eqref{bou-i-n-inter}, we get the desired bound: for some $\varepsilon >0$,
\begin{equation}\label{boun-i-n}
\big\|I_n\big\|_{L^2(\vp)} \lesssim 2^{-n\varepsilon} \ .
\end{equation}

\

\subsection{Bound for $II_{N,n}$} 
First, it can be checked that the following simplication occurs: for every $m\geq n$,
$$\mathbb{X}^{2,m+1}_{t_i^n,t^n_{i+1}} \big[ X_{t^n_i}^d\big]-\mathbb{X}^{2,m}_{t_i^n,t^n_{i+1}} \big[ X_{t^n_i}^d\big]=\frac12 \sum_{j\in S_i^{m,n}} \Big[ Y^{(m)}_{2j}X_{t^n_i}^dY^{(m)}_{2j+1}-Y^{(m)}_{2j+1}X_{t^n_i}^d Y^{(m)}_{2j} \Big] \, ,$$
where $Y^{(m)}_j$ is the notation introduced in \eqref{y-m-j}, and $S_i^{m,n}:= \{j\geq 0: \ i\, 2^{m-n}\leq j\leq (i+1)2^{m-n}-1\}$. Consequently,
\begin{align*}
&\big\| II_{N,n} \big\|_{L^2(\vp)}^2\\
&=\frac14\sum_{i_1,i_2=0}^{2^n-1} \sum_{m_1,m_2=n}^{N-1} \sum_{j_1\in S_{i_1}^{m_1,n}} \sum_{j_2\in S_{i_2}^{m_2,n}} \vp\bigg( \Big\{ X_{t_{i_1}^n}^{d_1} \Big[Y^{(m_1)}_{2j_1} X^d_{t_{i_1}^n} Y^{(m_1)}_{2j_1+1}-Y^{(m_1)}_{2j_1+1}X^d_{t_{i_1}^n} Y^{(m_1)}_{2j_1} \Big] X_{t_{i_1}^n}^{d_2}\Big\}\\
&\hspace{6cm} \cdot \Big\{ X_{t_{i_2}^n}^{d_2} \Big[Y^{(m_2)}_{2j_2+1} X^d_{t_{i_2}^n} Y^{(m_2)}_{2j_2}-Y^{(m_2)}_{2j_2}X^d_{t_{i_2}^n} Y^{(m_2)}_{2j_2+1} \Big] X_{t_{i_2}^n}^{d_1}\Big\}\bigg)
\end{align*}
Using the NC Wick formula \eqref{form-wick}, we then get
\begin{equation}\label{bou-i-i-n-n-k}
\big\| II_{N,n} \big\|_{L^2(\vp)}^2=\frac14\sum_{i_1,i_2=0}^{2^n-1} \sum_{m_1,m_2=n}^{N-1} \sum_{j_1\in S_{i_1}^{m_1,n}} \sum_{j_2\in S_{i_2}^{m_2,n}} \sum_{\pi \in NC_2(D)}\Phi_\pi(i_1,i_2,m_1,m_2,j_1,j_2) \ ,
\end{equation}
where $D:=2(d+d_1+d_2)+4$ (just as in \eqref{ka-pi}) and
\begin{align*}
&\Phi_\pi(i_1,i_2,m_1,m_2,j_1,j_2)\\
&=\Big[ \vp_\pi^{i_1,i_2,m_1,m_2}(2j_1,2j_1+1,2j_2+1,2j_2)-\vp_\pi^{i_1,i_2,m_1,m_2}(2j_1,2j_1+1,2j_2,2j_2+1)\\
&\hspace{1cm}-\vp_\pi^{i_1,i_2,m_1,m_2}(2j_1+1,2j_1,2j_2+1,2j_2)+\vp_\pi^{i_1,i_2,m_1,m_2}(2j_1+1,2j_1,2j_2,2j_2+1) \Big] \ ,
\end{align*}
with
\begin{align*}
&\vp_\pi^{i_1,i_2,m_1,m_2}(k_1,\ell_1,k_2,\ell_2)\\
&:=\vp_\pi\Big(\underbrace{X_{t_{i_1}^n},\ldots,X_{t_{i_1}^n}}_{d_1 \, \text{occur.}},Y^{(m_1)}_{k_1},\underbrace{X_{t^n_{i_1}},\ldots,X_{t^n_{i_1}}}_{d \, \text{occur.}},Y^{(m_1)}_{\ell_1},\underbrace{X_{t^n_{i_1}},\ldots,X_{t^n_{i_1}}}_{d_2 \, \text{occur.}},\nonumber\\
&\hspace{4cm}\underbrace{X_{t^n_{i_2}},\ldots,X_{t^n_{i_2}}}_{d_2 \, \text{occur.}},Y^{(m_2)}_{k_2},\underbrace{X_{t^n_{i_2}},\ldots,X_{t^n_{i_2}}}_{d \, \text{occur.}},Y^{(m_2)}_{\ell_2},\underbrace{X_{t^n_{i_2}},\ldots,X_{t^n_{i_2}}}_{d_1 \, \text{occur.}} \Big) \, .
\end{align*}

At this point, the key observation is that for any $\pi\in NC_2(D)$ such that $(d_1+1,d_1+d+2)\in \pi$ or $(D-(d_1+d+1),D-d_1)\in \pi$, one has $\Phi_\pi(i_1,i_2,m_1,m_2,j_1,j_2)=0$ (just because $\vp(Y^{(m)}_{2j}Y^{(m)}_{2j+1})=\vp(Y^{(m)}_{2j+1}Y^{(m)}_{2j})$), and therefore we can restrict the sum over $NC_2(D)$ in \eqref{bou-i-i-n-n-k} to a sum over the set $\{\pi\in NC_2(D): \ (d_1+1,d_1+d+2)\notin \pi \ \text{and}\ (D-(d_1+d+1),D-d_1)\notin \pi\}$.

\smallskip

Based on this observation, we easily get that
\begin{align}
&\big\| II_{N,n} \big\|_{L^2(\vp)}^2\lesssim\sum_{i_1,i_2=0}^{2^n-1} \sum_{m_1,m_2=n}^{N-1} \sum_{j_1\in S_{i_1}^{m_1,n}} \sum_{j_2\in S_{i_2}^{m_2,n}}\sum_{a,b,c,d=1}^2\nonumber\\
&\hspace{1cm} \bigg\{ \Big[\big|\vp\big(Y^{(m_1)}_{2j_1} Y^{(m_2)}_{2j_2}\big) \big|\big|\vp\big(Y^{(m_1)}_{2j_1+1} Y^{(m_2)}_{2j_2+1}\big) \big|  +\big|\vp\big(Y^{(m_1)}_{2j_1} Y^{(m_2)}_{2j_2+1}\big) \big|\big|\vp\big(Y^{(m_1)}_{2j_1+1} Y^{(m_2)}_{2j_2}\big) \big|\Big]\label{line-1}\\
&\hspace{1.5cm}+\big|\vp\big(Y^{(m_1)}_{2j_1} X_{t_{i_a}^n}\big) \big| \big|\vp\big(Y^{(m_1)}_{2j_1+1} X_{t_{i_b}^n}\big) \big| \big|\vp\big(Y^{(m_2)}_{2j_2} X_{t_{i_c}^n}\big) \big| \big|\vp\big(Y^{(m_2)}_{2j_2+1} X_{t_{i_d}^n}\big) \big|\nonumber\\
&\hspace{1.5cm}+\Big[\big|\vp\big(Y^{(m_1)}_{2j_1} Y^{(m_2)}_{2j_2}\big) \big|\big|\vp\big(Y^{(m_1)}_{2j_1+1} X_{t_{i_a}^n}\big) \big| \big|\vp\big(Y^{(m_2)}_{2j_2+1} X_{t_{i_b}^n}\big) \big|\nonumber\\
&\hspace{2.5cm}+\big|\vp\big(Y^{(m_1)}_{2j_1} Y^{(m_2)}_{2j_2+1}\big) \big|\big|\vp\big(Y^{(m_1)}_{2j_1+1} X_{t_{i_a}^n}\big) \big| \big|\vp\big(Y^{(m_2)}_{2j_2} X_{t_{i_b}^n}\big) \big|\label{line-4}\\
&\hspace{2.5cm}+\big|\vp\big(Y^{(m_1)}_{2j_1+1} Y^{(m_2)}_{2j_2}\big) \big|\big|\vp\big(Y^{(m_1)}_{2j_1} X_{t_{i_a}^n}\big) \big| \big|\vp\big(Y^{(m_2)}_{2j_2+1} X_{t_{i_b}^n}\big) \big|\label{line-5}\\
&\hspace{2.5cm}+\big|\vp\big(Y^{(m_1)}_{2j_1+1} Y^{(m_2)}_{2j_2+1}\big) \big|\big|\vp\big(Y^{(m_1)}_{2j_1} X_{t_{i_a}^n}\big) \big| \big|\vp\big(Y^{(m_2)}_{2j_2} X_{t_{i_b}^n}\big) \big|\Big]\bigg\} \, .\label{line-6}
\end{align}

\

For all fixed $i_1,i_2,m_1,m_2$, it holds that
\begin{align*}
& \sum_{j_1\in S_{i_1}^{m_1,n}} \sum_{j_2\in S_{i_2}^{m_2,n}} \big|\vp\big(Y^{(m_1)}_{2j_1} Y^{(m_2)}_{2j_2}\big) \big|\big|\vp\big(Y^{(m_1)}_{2j_1+1} Y^{(m_2)}_{2j_2+1}\big) \big|\\
&\leq \bigg( \sum_{j_1\in S_{i_1}^{m_1,n}} \sum_{j_2\in S_{i_2}^{m_2,n}} \big|\vp\big(Y^{(m_1)}_{2j_1} Y^{(m_2)}_{2j_2}\big) \big|^2\bigg)^{1/2}\bigg( \sum_{j_1\in S_{i_1}^{m_1,n}} \sum_{j_2\in S_{i_2}^{m_2,n}} \big|\vp\big(Y^{(m_1)}_{2j_1+1} Y^{(m_2)}_{2j_2+1}\big) \big|^2\bigg)^{1/2} \, .
\end{align*}
Now
\begin{align*}
\sum_{j_1\in S_{i_1}^{m_1,n}} \sum_{j_2\in S_{i_2}^{m_2,n}} \big|\vp\big(Y^{(m_1)}_{2j_1} Y^{(m_2)}_{2j_2}\big) \big|^2
&\leq \sum_{j_1\in S_{i_1}^{m_1,n}} \sum_{j_2\in S_{i_2}^{m_2,n}}\bigg| R_H\begin{pmatrix}t_{2j_1}^{m_1} & t_{2j_1+1}^{m_1}\\ t_{2j_2}^{m_2} & t_{2j_2+1}^{m_2} \end{pmatrix}\bigg|^{\frac{1}{2H}}\big|\vp\big(Y^{(m_1)}_{2j_1} Y^{(m_2)}_{2j_2}\big) \big|^{2-\frac{1}{2H}}\\
&\lesssim 2^{-m_1 \varepsilon} 2^{-m_2 \varepsilon}\big\|R_H\big\|^{\frac{1}{2H}}_{\frac{1}{2H}-var;[t_{i_1}^n,t_{i_1+1}^n] \times [t_{i_2}^n,t_{i_2+1}^n]} \, ,
\end{align*}
and, with the same arguments,
\begin{align*}
&\sum_{j_1\in S_{i_1}^{m_1,n}} \sum_{j_2\in S_{i_2}^{m_2,n}} \big|\vp\big(Y^{(m_1)}_{2j_1+1} Y^{(m_2)}_{2j_2+1}\big) \big|^2\lesssim  2^{-m_1 \varepsilon} 2^{-m_2 \varepsilon}\big\|R_H\big\|^{\frac{1}{2H}}_{\frac{1}{2H}-var;[t_{i_1}^n,t_{i_1+1}^n] \times [t_{i_2}^n,t_{i_2+1}^n]} \ .
\end{align*}
Therefore, remembering that the map $\omega_H(I\times J):=\big\|R_H\big\|^{\frac{1}{2H}}_{\frac{1}{2H}-var;I\times J}$ defines a 2D-control, we get 
\begin{equation}\label{bound-pr-1}
\sum_{i_1,i_2=0}^{2^n-1} \sum_{m_1,m_2=n}^{N-1}\sum_{j_1\in S_{i_1}^{m_1,n}} \sum_{j_2\in S_{i_2}^{m_2,n}} \big|\vp\big(Y^{(m_1)}_{2j_1} Y^{(m_2)}_{2j_2}\big) \big|^2\lesssim 2^{-n\varepsilon} \big\|R_H\big\|^{\frac{1}{2H}}_{\frac{1}{2H}-var;[0,T]^2} \, .
\end{equation}
The very same arguments (and the very same resulting bound) clearly remain valid for the sum of the second term on line \eqref{line-1}.

\

Then, for any fixed $a,b,c,d\in \{1,2\}$, we have by \eqref{bou-r-h-2}
\begin{align*}
&\sum_{j_1\in S_{i_1}^{m_1,n}} \sum_{j_2\in S_{i_2}^{m_2,n}} \big|\vp\big(Y^{(m_1)}_{2j_1} X_{t_{i_a}^n}\big) \big| \big|\vp\big(Y^{(m_1)}_{2j_1+1} X_{t_{i_b}^n}\big) \big| \big|\vp\big(Y^{(m_2)}_{2j_2} X_{t_{i_c}^n}\big) \big| \big|\vp\big(Y^{(m_2)}_{2j_2+1} X_{t_{i_d}^n}\big) \big|\\
&\leq \bigg( \sum_{j_1\in S_{i_1}^{m_1,n}} \sum_{j_2\in S_{i_2}^{m_2,n}} \big|\vp\big(Y^{(m_1)}_{2j_1} X_{t_{i_a}^n}\big) \big|^2 \big|\vp\big(Y^{(m_2)}_{2j_2} X_{t_{i_c}^n}\big) \big|^2 \bigg)^{1/2}\\
&\hspace{4cm} \bigg(  \sum_{j_1\in S_{i_1}^{m_1,n}} \sum_{j_2\in S_{i_2}^{m_2,n}}  \big|\vp\big(Y^{(m_1)}_{2j_1+1} X_{t_{i_b}^n}\big) \big|^2  \big|\vp\big(Y^{(m_2)}_{2j_2+1} X_{t_{i_d}^n}\big) \big|^2 \bigg)^{1/2}\\
&\lesssim\sum_{j_1\in S_{i_1}^{m_1,n}} \sum_{j_2\in S_{i_2}^{m_2,n}} 2^{-4H m_1}2^{-4H m_2} \lesssim 2^{-2n} 2^{-m_1(4H-1)} 2^{-m_2(4H-1)} \ ,
\end{align*}
and thus
\begin{align}
&\sum_{i_1,i_2=0}^{2^n-1} \sum_{m_1,m_2=n}^{N-1}\sum_{j_1\in S_{i_1}^{m_1,n}} \sum_{j_2\in S_{i_2}^{m_2,n}} \big|\vp\big(Y^{(m_1)}_{2j_1} X_{t_{i_a}^n}\big) \big| \big|\vp\big(Y^{(m_1)}_{2j_1+1} X_{t_{i_b}^n}\big) \big| \big|\vp\big(Y^{(m_2)}_{2j_2} X_{t_{i_c}^n}\big) \big| \big|\vp\big(Y^{(m_2)}_{2j_2+1} X_{t_{i_d}^n}\big) \big|\nonumber\\
&\hspace{4cm}\lesssim \sum_{m_1,m_2=n}^{N-1} 2^{-m_1(4H-1)} 2^{-m_2(4H-1)} \lesssim 2^{-2n(4H-1)} \ .\label{bound-pr-2}
\end{align}

\

Finally, for any fixed $a,b\in \{1,2\}$, we can first combine the above intermediate bounds to get that
\begin{align*}
&\sum_{j_1\in S_{i_1}^{m_1,n}} \sum_{j_2\in S_{i_2}^{m_2,n}}\big|\vp\big(Y^{(m_1)}_{2j_1} Y^{(m_2)}_{2j_2}\big) \big|\big|\vp\big(Y^{(m_1)}_{2j_1+1} X_{t_{i_a}^n}\big) \big| \big|\vp\big(Y^{(m_2)}_{2j_2+1} X_{t_{i_b}^n}\big) \big|\\
&\leq \bigg( \sum_{j_1\in S_{i_1}^{m_1,n}} \sum_{j_2\in S_{i_2}^{m_2,n}}\big|\vp\big(Y^{(m_1)}_{2j_1} Y^{(m_2)}_{2j_2}\big) \big|^2\bigg)^{1/2}\\
&\hspace{4cm}\bigg( \sum_{j_1\in S_{i_1}^{m_1,n}} \sum_{j_2\in S_{i_2}^{m_2,n}}\big|\vp\big(Y^{(m_1)}_{2j_1+1} X_{t_{i_a}^n}\big) \big|^2 \big|\vp\big(Y^{(m_2)}_{2j_2+1} X_{t_{i_b}^n}\big) \big|^2\bigg)^{1/2}\\
&\lesssim 2^{-n} 2^{-m_1 \varepsilon} 2^{-m_2 \varepsilon}  \Big( \big\|R_H\big\|^{\frac{1}{2H}}_{\frac{1}{2H}-var;[t_{i_1}^n,t_{i_1+1}^n] \times [t_{i_2}^n,t_{i_2+1}^n]} \Big)^{1/2} \ ,
\end{align*}
for some $\varepsilon >0$, and as a result
\begin{align}
&\sum_{i_1,i_2=0}^{2^n-1} \sum_{m_1,m_2=n}^{N-1}\sum_{j_1\in S_{i_1}^{m_1,n}} \sum_{j_2\in S_{i_2}^{m_2,n}}\big|\vp\big(Y^{(m_1)}_{2j_1} Y^{(m_2)}_{2j_2}\big) \big|\big|\vp\big(Y^{(m_1)}_{2j_1+1} X_{t_{i_a}^n}\big) \big| \big|\vp\big(Y^{(m_2)}_{2j_2+1} X_{t_{i_b}^n}\big) \big|\nonumber\\
&\lesssim 2^{-n(1+2\varepsilon)} \sum_{i_1,i_2=0}^{2^n-1}\Big( \big\|R_H\big\|^{\frac{1}{2H}}_{\frac{1}{2H}-var;[t_{i_1}^n,t_{i_1+1}^n] \times [t_{i_2}^n,t_{i_2+1}^n]} \Big)^{1/2}\nonumber\\
&\lesssim 2^{-2n\varepsilon } \Big( \sum_{i_1,i_2=0}^{2^n-1}\big\|R_H\big\|^{\frac{1}{2H}}_{\frac{1}{2H}-var;[t_{i_1}^n,t_{i_1+1}^n] \times [t_{i_2}^n,t_{i_2+1}^n]} \Big)^{1/2}\lesssim 2^{-2n\varepsilon } \big\|R_H\big\|^{\frac{1}{4H}}_{\frac{1}{2H}-var;[0,T]^2}  \ .\label{bound-pr-3}
\end{align}
The very same arguments (and the very same resulting bound) clearly remain valid for the terms on lines \eqref{line-4}, \eqref{line-5} and \eqref{line-6}.

\

Combining \eqref{bound-pr-1}, \eqref{bound-pr-2} and \eqref{bound-pr-3}, we end up with the desired bound, namely
\begin{equation}\label{boun-ii-n-n}
\big\| II_{N,n} \big\|_{L^2(\vp)} \lesssim 2^{-n\varepsilon} \ ,
\end{equation}
where the proportional constant in $\lesssim$ depends neither on $n$, nor on $N$.

\

\subsection{Conclusion}
By injecting \eqref{boun-i-n} and \eqref{boun-ii-n-n} into the decomposition \eqref{decompo-i-ii}, we obtain that
$$\bigg\|\sum_{i=0}^{2^n-1} X_{t_i^n}^{d_1}\Big\{\mathbb{X}^{2,N}_{t_i^n,t^n_{i+1}} \big[ X_{t^n_i}^d\big] -\frac12 \vp\big( X_{t_i^n}^d\big) \vp\big( X_{t_i^n,t_{i+1}^n}^2\big)\Big\}X_{t_i^n}^{d_2} \bigg\|_{L^2(\vp)} \lesssim  2^{-n\varepsilon} \, ,$$
where the proportional constant in $\lesssim$ depends neither on $n$, nor on $N$. We can thus let $N$ tend to infinity (for fixed $n$) and use Proposition \ref{prop:rough-int} (point $(i)$) to assert that
$$\bigg\|\sum_{i=0}^{2^n-1} X_{t_i^n}^{d_1}\Big\{\mathbb{X}^{2}_{t_i^n,t^n_{i+1}} \big[ X_{t^n_i}^d\big] -\frac12 \vp\big( X_{t_i^n}^d\big) \vp\big( X_{t_i^n,t_{i+1}^n}^2\big)\Big\}X_{t_i^n}^{d_2} \bigg\|_{L^2(\vp)} \lesssim 2^{-n\varepsilon} \, ,$$
which immediately yields the convergence property \eqref{conv-loc-lev}.

\

\bigskip

\end{document}